\newtheorem{theorem}{Theorem}[section]
\newtheorem{lemma}[theorem]{Lemma}
\newtheorem{proposition}[theorem]{Proposition}
\newtheorem{corollary}[theorem]{Corollary}
\newtheorem{definition}[theorem]{Definition}
\newtheorem{remark}[theorem]{Remark}
\newcommand{\negint}{{\int\negthickspace\negthickspace\negthickspace\negthickspace-}}
\def\R{\mathbb R}
\newcommand{\ep}{\varepsilon}
\title 
[Free boundary regularity in obstacle problems]{Free boundary regularity
in obstacle problems}
\author
[A. {Figalli}]
{Alessio Figalli}
\address{ETH Z\"urich, Mathematics Department, R\"amistrasse 101, 8092 Z\"urich, Switzerland.}
\thanks{A.F. is supported by ERC Grant ``Regularity and Stability in Partial Differential Equations (RSPDE)''. A.F. is thankful to Yash Jhaveri for useful comments on a preliminary version of this manuscript.}
\email{alessio.figalli@math.ethz.ch}
\keywords{}
\begin{document}

\begin{abstract}
  These notes record and expand the lectures for the ``Journ\'ees \'Equations aux D\'eriv\'ees Partielles 2018'' held by
the author during the week of June 11-15, 2018. 
The aim is to give a overview of the classical theory for the obstacle problem, and then present some recent developments on the regularity of the free boundary. 
\end{abstract}

%

\maketitle
\tableofcontents

\section{Introduction: the obstacle problem}
The classical obstacle problem aims to describe the shape of an elastic membrane lying above an obstacle.

Mathematically, this problem can be described as follows:
given a domain $\Omega\subset \R^n$,
a function $\varphi:\Omega\to \R$ (the obstacle),
and a function $f:\partial\Omega\to \R$ satisfying $f\geq \varphi|_{\partial\Omega}$ (the boundary condition),
one wants to minimize the Dirichlet integral among all functions that coincide with $f$ on $\partial \Omega$ and lie above the obstacle $\varphi$ (see Figure \ref{Pic29b}).
 \begin{figure}[ht]
\includegraphics[scale=0.23]{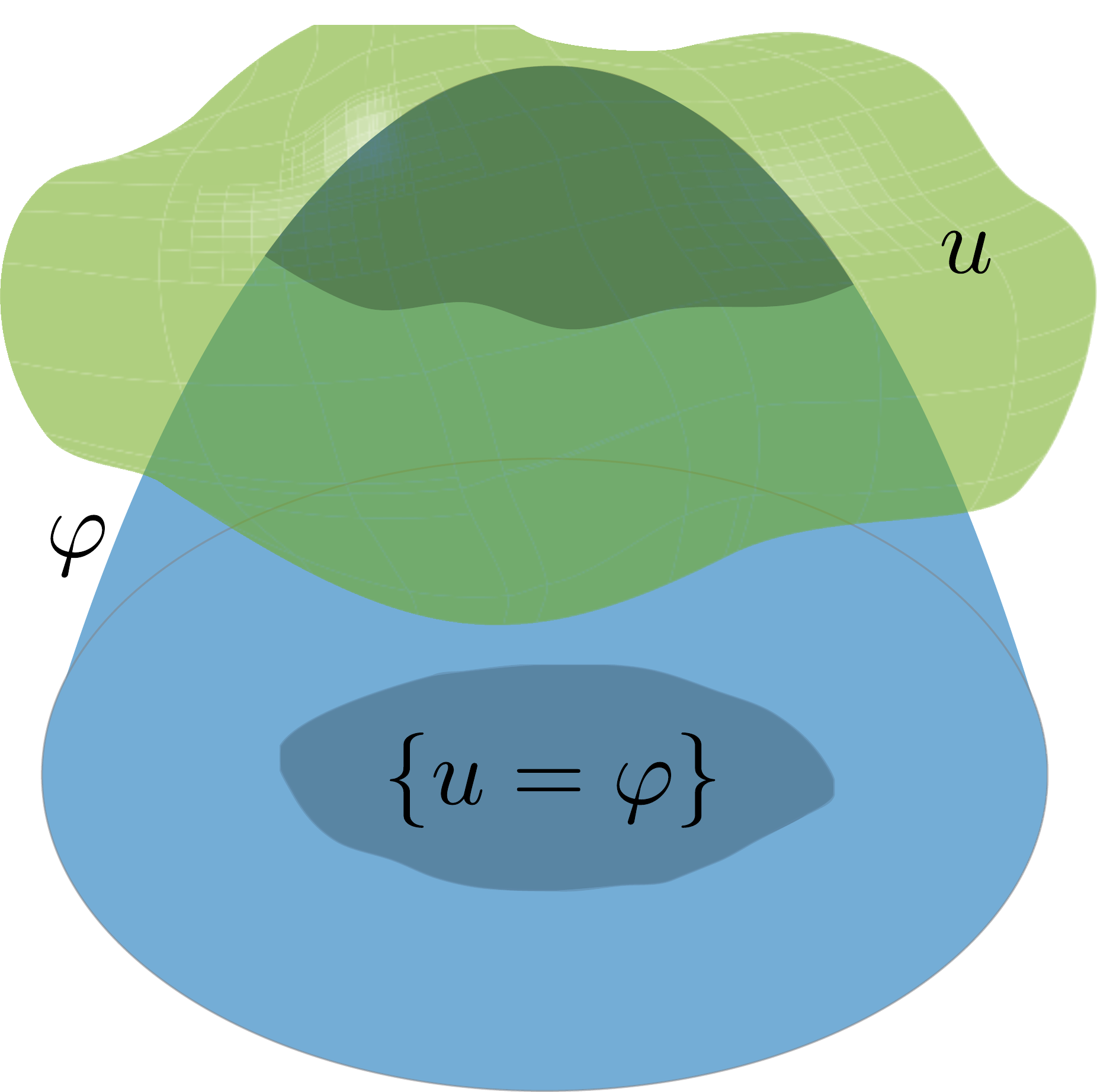} \caption{An elastic membrane lying above an obstacle.} 
  \label{Pic29b}
 \end{figure}
Hence, one is lead to minimize
\begin{equation}
\label{eq:min}
\min_v \biggl\{\int_\Omega\frac{|\nabla v|^2}{2}\,:\,v|_{\partial\Omega}=f,\,v \geq \varphi\biggr\},
\end{equation}
where the Dirichlet integral $\int_\Omega\frac{|\nabla v|^2}{2}$ represents the elastic energy of  the membrane corresponding to  the graph of $v$.

The main goal consists in understanding the regularity properties of the minimizer, as well as the structure of the contact set between the minimizer and the obstacle.
In the next section we shall describe all these questions in detail.

\section{The functional setting: existence and uniqueness}
The proof of existence and uniqueness of minimizers is very similar to the one for the classical Dirichlet problem without obstacle.
Since our focus will be in understanding how the obstacle influence the solution, instead of working under minimal regularity assumptions, we shall assume that all the data are smooth in order to emphasize the main ideas.
Hence, we assume that $\Omega$ is a bounded domain of class $C^1,$ $\varphi:\overline\Omega\to \R$ is of class $C^1$, and that $f:\partial\Omega\to \R$ is $C^1$ as well.
Note that, because of these hypotheses, one can extend $f$ to a $C^1$ function
$F:\overline\Omega\to \R$.
Recall that, by assumption, $\varphi|_{\partial\Omega}\leq f$.

Under these hypotheses, we can show existence and uniqueness of minimizers in $W^{1,2}(\Omega)$.
\begin{proposition}
\label{prop:exist}
There exists a unique minimizer  $u\in W^{1,2}(\Omega)$ for the minimization problem \eqref{eq:min}.
\end{proposition}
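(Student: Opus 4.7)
The natural plan is to apply the direct method of the calculus of variations to the admissible class
\[
\K := \{v \in W^{1,2}(\Omega)\,:\, v - F \in W^{1,2}_0(\Omega),\ v \geq \varphi \text{ a.e. in } \Omega\},
\]
where $F\in C^1(\overline\Omega)$ is the extension of $f$ fixed in the paragraph above the statement. First I would verify that $\K$ is non-empty by exhibiting a concrete competitor: the function $v_0 := \max(F,\varphi)$ lies in $W^{1,2}(\Omega)$ (pointwise maxima of $C^1$ functions are Lipschitz), satisfies $v_0\geq\varphi$ everywhere, and agrees with $F$ on $\partial\Omega$ because $F=f\geq\varphi$ there. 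This both shows $\K\neq\emptyset$ and gives a finite upper bound for the infimum.

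Next I would take a minimizing sequence $\{v_k\}\subset\K$ and establish a uniform $W^{1,2}$ bound. Since $v_k - F \in W^{1,2}_0(\Omega)$, the Poincar\'e inequality applied to $v_k-F$ combined with the uniform bound on $\int_\Omega|\nabla v_k|^2$ yields a uniform bound on $\|v_k\|_{W^{1,2}}$. By reflexivity I can extract a subsequence $v_{k_j}\rightharpoonup u$ weakly in $W^{1,2}(\Omega)$; Rellich then gives strong $L^2$ and a.e.\ convergence along a further subsequence, and trace continuity for the weak topology gives $u-F\in W^{1,2}_0(\Omega)$, while the pointwise a.e.\ limit preserves $u\geq\varphi$. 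Hence $u\in\K$. The Dirichlet integral $J(v):=\tfrac12\int_\Omega|\nabla v|^2$ is convex and continuous on $W^{1,2}(\Omega)$, hence weakly lower semicontinuous, so $J(u)\leq\liminf_j J(v_{k_j})=\inf_{\K}J$, and $u$ is a minimizer.

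For uniqueness, I would use strict convexity. If $u_1,u_2\in\K$ are both minimizers, then $w:=(u_1+u_2)/2\in\K$ (the constraint $v\geq\varphi$ and the affine boundary condition are stable under convex combinations). The pointwise identity $\tfrac12|\tfrac{\nabla u_1+\nabla u_2}{2}|^2 = \tfrac14|\nabla u_1|^2+\tfrac14|\nabla u_2|^2 - \tfrac18|\nabla u_1-\nabla u_2|^2$ integrated over $\Omega$ gives
\[
J(w) = \tfrac12 J(u_1)+\tfrac12 J(u_2) - \tfrac18\int_\Omega|\nabla u_1-\nabla u_2|^2,
\]
and the minimality of $u_1,u_2$ forces $\nabla(u_1-u_2)\equiv 0$ a.e.; combined with $u_1-u_2\in W^{1,2}_0(\Omega)$ this yields $u_1=u_2$.

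The only non-routine point is checking that the constraint $v\geq\varphi$ is preserved under weak $W^{1,2}$ limits; this is where one must pass to a subsequence converging almost everywhere (via Rellich) rather than argue directly in the weak topology. Everything else is the textbook direct method, and no regularity beyond $C^1$ data is really needed—continuity of $\varphi$ suffices to make pointwise comparison with $u$ meaningful.
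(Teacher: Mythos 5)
Your proposal is correct and follows essentially the same direct-method argument as the paper: the same competitor $\max(F,\varphi)$, a Poincar\'e-based uniform bound on the minimizing sequence, weak compactness plus weak lower semicontinuity of the Dirichlet energy, and the strict-convexity/parallelogram argument with Poincar\'e for uniqueness. The only (harmless) variation is how you verify stability of the constraint under weak limits---via Rellich and a.e.\ convergence of a further subsequence---where the paper instead invokes that the convex set $\mathcal K_\varphi$ is strongly closed and hence weakly closed; both are standard and correct.
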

\begin{proof}
The existence follows by observing that \eqref{eq:min} corresponds to minimizing the Dirichlet functional among all functions $v$ that belong to the convex set
$$
\mathcal K_\varphi:=\{v \in W^{1,2}(\Omega)\,:\,v|_{\partial\Omega}=f,\,v \geq \varphi\},
$$
where the relation $v|_{\partial\Omega}=f$ must be intended in the sense of traces of Sobolev functions.
Note that $\mathcal K_\varphi$ is closed in the strong $W^{1,2}$ topology, hence it is also closed for the weak $W^{1,2}$ topology (as a consequence of Hanh-Banach separation theorem).
Thus, to find a minimizer  of \eqref{eq:min} it suffices to argue as in the case of the classical Dirichlet problem.

More precisely, consider a minimizing sequence $\{v_k\}_{k\geq 1}$, namely a sequence of functions $\{v_k\}_{k\geq 1} \subset \mathcal K_\varphi$ such that
\begin{equation}
\label{eq:min seq}
\int_\Omega \frac{|\nabla v_k|^2}2  \to \alpha:=\inf_{v\in \mathcal K_\varphi} \int_\Omega \frac{|\nabla v|^2}2 .
\end{equation}
Also, fix a Lipschitz function $V\in \mathcal K_\varphi$ (for instance, one can define $V:=\max\{\varphi,F\}$, where $F$ is a $C^1$ extension of $f$ as explained above).

Note that $\alpha $ is finite since  $\alpha \leq \int_\Omega \frac{|\nabla V|^2}2 <\infty$.
Thus, thanks to \eqref{eq:min seq}, 
there exists $k_0>0$ such that
\begin{equation}
\label{eq:Dir k}
\int_\Omega \frac{|\nabla v_k|^2}2  \leq \alpha+1\qquad \forall\, k \geq k_0.
\end{equation}
Furthermore, by Poincar\'e inequality applied to the function $v_k-V \in W^{1,2}_0(\Omega)$,
\begin{equation}
\label{eq:L2 k}
\|v_k-V\|_{L^2(\Omega)}\leq C_\Omega
\|\nabla v_k-\nabla V\|_{L^2(\Omega)} 
\end{equation}
Hence, combining \eqref{eq:Dir k} and \eqref{eq:L2 k}, for all $k \geq k_0$ we get
\begin{align*}
\|v_k\|_{L^2(\Omega)}+\|\nabla v_k\|_{L^2(\Omega)} &\leq 
\|v_k-V\|_{L^2(\Omega)}+ \|V\|_{L^2(\Omega)}+\|\nabla v_k\|_{L^2(\Omega)}\\
&\leq 
C_\Omega
\|\nabla v_k-\nabla V\|_{L^2(\Omega)}+ \|V\|_{L^2(\Omega)}+\|\nabla v_k\|_{L^2(\Omega)}\\
&\leq 
(C_\Omega+1)
\|\nabla v_k\|_{L^2(\Omega)}+
\|V\|_{L^2(\Omega)}+ C_\Omega\|\nabla V\|_{L^2(\Omega)}\\
&\leq 
(C_\Omega+1)
\sqrt{2(\alpha+1)}+
\|V\|_{L^2(\Omega)}+ C_\Omega\|\nabla V\|_{L^2(\Omega)}.
\end{align*}
This proves that the functions $v_k$ are uniformly bounded in $W^{1,2}(\Omega)$, hence there exists a subsequence $v_{k_j}$ that converges weakly to a function $u \in W^{1,2}(\Omega)$. 
Since $v_{k_j} \in \mathcal K_\varphi$ and the set $\mathcal K_\varphi$ is weakly closed (by the discussion above), it follows that $u \in \mathcal K_\varphi$.
Finally, since the $L^2$ norm is lower semicontinuous under weak convergence,
$$
\int_\Omega \frac{|\nabla u|^2}2  \leq \liminf_{j\to \infty}\int_\Omega \frac{|\nabla v_{k_j}|^2}2 =\alpha,
$$
which proves that $u$ is a minimizer.

For the uniqueness, it suffices to observe that if $u_1,u_2 \in \mathcal K_\varphi$ then
$$
\int_\Omega\biggl|\frac{\nabla u_1+\nabla u_2}{2}\biggr|^2  \leq \frac12\biggl(\int_\Omega|\nabla u_1|^2 +\int_\Omega |\nabla u_2|^2 \biggr),
$$
with equality if and only if $\nabla u_1\equiv \nabla u_2$. In particular, if $u_1$ and $u_2$ are two minimizers then equality must hold (otherwise $\frac{u_1+u_2}2$ would have strictly less Dirichlet energy), therefore
$$
\nabla (u_1-u_2)= 0\text{ in }\Omega,\qquad u_1-u_2=0 \text{ on }\partial\Omega.
$$
By Poincar\'e inequality this implies that $u_1-u_2=0$, as desired.
\end{proof}

\section{Euler-Lagrange equation and consequences}
It is a well-known fact that minimizers of the Dirichlet energy are harmonic. However, in our case, the presence of the obstacle $\varphi$ plays an important role.
\begin{proposition}
\label{prop:EL}
Let $u:\Omega \to \R$ be the minimizer of \eqref{eq:min}. Then $\Delta u \leq 0$ inside $\Omega.$
\end{proposition}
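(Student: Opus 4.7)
The plan is to exploit the fact that the constraint $v \geq \varphi$ is one-sided, which means from the minimizer $u$ we can freely perturb upward while remaining in $\mathcal{K}_\varphi$, but not downward. This asymmetry should produce a one-sided variational inequality, which will translate into the sign condition on $\Delta u$.

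Concretely, I would fix an arbitrary nonnegative test function $\psi \in C_c^\infty(\Omega)$ with $\psi \geq 0$, and for $t \geq 0$ consider the competitor $u + t\psi$. Since $\psi \geq 0$ we have $u + t\psi \geq u \geq \varphi$, and since $\psi$ has compact support in $\Omega$ the boundary trace is unchanged, so $u + t\psi \in \mathcal{K}_\varphi$. Thus the real-valued function
\[
g(t) := \int_\Omega \frac{|\nabla(u+t\psi)|^2}{2} = \int_\Omega \frac{|\nabla u|^2}{2} + t\int_\Omega \nabla u \cdot \nabla \psi + \frac{t^2}{2}\int_\Omega |\nabla \psi|^2
\]
attains its minimum on $[0,\infty)$ at $t=0$ by Proposition \ref{prop:exist}. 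Taking the right derivative at $t=0$ gives $g'(0^+) = \int_\Omega \nabla u \cdot \nabla \psi \geq 0$ for every nonnegative $\psi \in C_c^\infty(\Omega)$.

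Now I interpret this in the sense of distributions: the inequality $\int_\Omega \nabla u \cdot \nabla \psi \geq 0$ for all nonnegative $\psi \in C_c^\infty(\Omega)$ is exactly the statement that $-\Delta u \geq 0$ as a distribution, i.e.\ $\Delta u \leq 0$ in $\Omega$ in the distributional sense, which is the desired conclusion.

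There is really no serious obstacle here; the only point one must be careful about is to perturb in a direction that preserves membership in $\mathcal{K}_\varphi$, which is why one can only use $t \geq 0$ and $\psi \geq 0$ (perturbing with an arbitrary sign would require knowing that $u > \varphi$ on the support of $\psi$, which is precisely the question the rest of the paper addresses). In exchange for this restriction one obtains only an inequality rather than the equation $\Delta u = 0$, which is exactly what the statement of Proposition \ref{prop:EL} asserts.
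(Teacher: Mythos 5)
Your proposal is correct and follows essentially the same route as the paper: perturb upward by $t\psi$ with $\psi\geq 0$, use minimality to obtain $\int_\Omega \nabla u\cdot\nabla\psi\geq 0$, and interpret this distributionally as $-\Delta u\geq 0$. Phrasing it as the right derivative of $g(t)$ at $t=0$ is just a cleaner packaging of the paper's division-by-$\epsilon$ argument; there is no substantive difference.
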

\begin{proof}
Let $\psi \in C^\infty_c(\Omega)$ be nonnegative, and for $\epsilon>0$ consider the function
$u_\epsilon:=u+\epsilon\psi$. Since $\psi\geq 0$ it follows that $u_\epsilon\geq u \geq \varphi$.
Also, because $\psi$ is compactly supported in $\Omega$, $u_\epsilon|_{\partial\Omega}=u|_{\partial\Omega}=f.$
This shows that $u_\epsilon$ is admissible in the minimization problem \eqref{eq:min}, thus (by the minimality of $u$)
\begin{align*}
\int_{\Omega}\frac{|\nabla u|^2}{2} &\leq \int_{\Omega}\frac{|\nabla u_\epsilon|^2}2 = \int_{\Omega}\frac{|\nabla u+\epsilon\nabla\psi|^2}{2}\\
&=\int_{\Omega}\frac{|\nabla u|^2}{2} + \epsilon\int_\Omega \nabla u\cdot \nabla \psi +\epsilon^2\int_{\Omega}\frac{|\nabla \psi|^2}{2}.
\end{align*}
Simplifying the term $\int_{\Omega}\frac{|\nabla u|^2}{2}$ from the first and last expression, and then dividing by $\epsilon$, we obtain
$$
0\leq \int_\Omega \nabla u\cdot \nabla \psi +\epsilon\int_{\Omega}\frac{|\nabla \psi|^2}{2}.
$$
Thus, letting $\epsilon\to 0^+$ we get
$$
0\leq \int_\Omega \nabla u\cdot \nabla \psi
=-\int_\Omega \Delta u\, \psi,
$$
where the last equality must be intended in the sense of distribution. Since $\psi$ is an arbitrary nonnegative smooth function, the inequality above implies that $-\Delta u\geq 0$, as desired.
\end{proof}

As a consequence of the previous result, we can find a ``nice'' representative for $u$.

\begin{corollary}
Let $u:\Omega \to \R$ be the minimizer of \eqref{eq:min}. Then, up to changing $u$ in a set of measure zero, $u$ is lower semicontinuous.
\end{corollary}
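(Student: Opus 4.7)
The plan is to exhibit the precise representative
$$
\tilde u(x) := \lim_{r\to 0^+}\frac{1}{|B_r(x)|}\int_{B_r(x)} u(y)\,dy
$$
and show that this limit exists for every $x\in\Omega$ (with values in $(-\infty,+\infty]$), coincides with $u$ almost everywhere, and is lower semicontinuous. The whole argument hinges on the distributional superharmonicity $-\Delta u\geq 0$ established in Proposition \ref{prop:EL}.

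The first step is to establish monotonicity of ball averages: for each $x\in\Omega$, the map
$$
r\mapsto A_r(x):=\frac{1}{|B_r(x)|}\int_{B_r(x)} u(y)\,dy
$$
is non-increasing on $\bigl(0,\dist(x,\partial\Omega)\bigr)$. For smooth superharmonic functions this is a classical computation: differentiating spherical averages in $r$ and applying the divergence theorem expresses the derivative as an integral of $\Delta u\leq 0$, and ball averages inherit monotonicity by integrating in $r$. For our $u$, which is only distributionally superharmonic, I would mollify: on any subdomain $\Omega'\Subset\Omega$ and for $\epsilon$ smaller than $\dist(\Omega',\partial\Omega)$, the mollification $u_\epsilon:=u*\rho_\epsilon$ is smooth and still satisfies $\Delta u_\epsilon\leq 0$ pointwise, so its ball averages are non-increasing; passing to the limit $\epsilon\to 0^+$ via $L^1_{\mathrm{loc}}$ convergence transfers the monotonicity to $u$.

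With monotonicity in hand, $\tilde u(x)$ is well-defined for every $x\in\Omega$, and the Lebesgue differentiation theorem (applicable since $u\in W^{1,2}\subset L^1_{\mathrm{loc}}$) gives $\tilde u=u$ a.e., so $\tilde u$ is an admissible representative. To prove lower semicontinuity at a point $x_0\in\Omega$, fix $r_0$ small enough that $B_{2r_0}(x_0)\Subset\Omega$. For any $r\in(0,r_0)$ and any $x$ with $|x-x_0|<r_0$, monotonicity yields $\tilde u(x)\geq A_r(x)$. Since $A_r$ is a convolution of $u\in L^1_{\mathrm{loc}}$ with a rescaled indicator, it is continuous in $x$, so
$$
\liminf_{x\to x_0}\tilde u(x)\;\geq\;\lim_{x\to x_0}A_r(x)\;=\;A_r(x_0).
$$
Letting $r\to 0^+$ and invoking the definition of $\tilde u(x_0)$ concludes $\liminf_{x\to x_0}\tilde u(x)\geq \tilde u(x_0)$.

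The main obstacle is justifying the monotonicity of ball averages directly from the distributional inequality $-\Delta u\geq 0$; the rest reduces to Lebesgue differentiation and continuity of convolutions. The mollification step is standard but requires checking that the superharmonicity inequality passes to $u*\rho_\epsilon$ and that $L^1_{\mathrm{loc}}$ convergence is enough to transfer the monotonicity statement to the limit.
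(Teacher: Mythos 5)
Your proposal is correct and follows essentially the same route as the paper: define the representative as the limit of ball averages, use the monotonicity of averages coming from $\Delta u\leq 0$ to get existence of the limit and agreement with $u$ at Lebesgue points, then deduce lower semicontinuity from the monotonicity together with continuity of the fixed-radius average in the center (which the paper obtains via dominated convergence and you via continuity of the convolution). Your mollification step to justify the mean value monotonicity for the distributional inequality is a standard filling-in of a fact the paper simply cites.
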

\begin{proof}
By the mean value formula for superharmonic functions
\begin{equation}
\label{eq:super harm u}
\Delta u \leq 0\quad \text{in }\Omega\qquad \Rightarrow\qquad 
r\mapsto \negint_{B_r(x)}u\quad  \text{is decreasing on $(0,R_x)$},
\end{equation}
where $R_x:={\rm dist}(x,\partial\Omega)$.
Define the function
$$
\hat u(x):=\lim_{r\to 0}\negint_{B_r(x)}u\qquad \forall\,x\in\Omega
$$
(note that the limit exists thanks to \eqref{eq:super harm u}).
Then $\hat u(x)=u(x)$ whenever $x$ is a Lebesgue point for $u$, therefore $\hat u=u$ a.e.

Also, if $x_k\to x_\infty$ then, using \eqref{eq:super harm u} again, we get
$$
\negint_{B_r(x_\infty)}u =\lim_{k\to \infty}\negint_{B_r(x_k)}u \leq \liminf_{k\to \infty}\hat u(x_k)\qquad \forall\,r \in (0,R_{x_\infty}/2),
$$
where the first equality follows by the Lebesgue dominated convergence theorem noticing that
$$
\negint_{B_r(x_k)}u=\frac{1}{|B_r|}\int_{\Omega}u\chi_{B_r(x_k)}\qquad \text{and}\qquad u\chi_{B_r(x_k)}\to u\chi_{B_r(x_\infty)} \text{ a.e.}
$$
Letting $r\to 0$ we obtain
$$
\hat u(x_\infty)\leq \liminf_{k\to \infty}\hat u(x_k),
$$
as desired.
\end{proof}
From now on we will implicitly assume that $u$ coincides with its lower semicontinuous representative $\hat u$. In particular, $u$ is pointwise defined at every point.

An important consequence of the previous result is the following:
\begin{corollary}
Let $u:\Omega \to \R$ be the minimizer of \eqref{eq:min}. Then
$$
\text{the set }\quad\{u>\varphi\}\cap \Omega \quad \text{ is open}.
$$
\end{corollary}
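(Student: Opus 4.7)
The plan is to use the fact that $\{u > \varphi\} \cap \Omega$ is the superlevel set $\{u - \varphi > 0\} \cap \Omega$ of the function $w := u - \varphi$, and to show that $w$ is lower semicontinuous on $\Omega$; the conclusion will then follow from the elementary fact that superlevel sets of lower semicontinuous functions are open.

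More precisely, first I would observe that by the preceding corollary we may (and do) identify $u$ with its lower semicontinuous representative $\hat u$. Since $\varphi$ is assumed to be $C^1$ on $\overline\Omega$, it is in particular continuous, so $-\varphi$ is continuous; hence $w = u + (-\varphi)$ is the sum of a lower semicontinuous function and a continuous function, which is again lower semicontinuous on $\Omega$.

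Next, fix any $x_0 \in \{u > \varphi\} \cap \Omega$ and set $\delta := w(x_0) = u(x_0) - \varphi(x_0) > 0$. By lower semicontinuity of $w$ at $x_0$,
$$
\liminf_{x \to x_0} w(x) \geq w(x_0) = \delta,
$$
so there exists $r > 0$, with $B_r(x_0) \subset \Omega$, such that $w(x) > \delta/2 > 0$ for every $x \in B_r(x_0)$. In particular, $B_r(x_0) \subset \{u > \varphi\} \cap \Omega$, which shows that every point of $\{u > \varphi\} \cap \Omega$ is interior.

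I do not expect a genuine obstacle here: the only substantive input is the lower semicontinuous representative supplied by the previous corollary, together with the continuity of the obstacle $\varphi$. The argument is otherwise a standard property of lower semicontinuous functions.
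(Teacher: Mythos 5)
Your proof is correct and follows essentially the same route as the paper: both rest on the lower semicontinuity of $u-\varphi$ (from the previous corollary plus continuity of $\varphi$), the only cosmetic difference being that you show each point of $\{u>\varphi\}\cap\Omega$ is interior directly, while the paper shows the complementary sublevel set $\{u\leq\varphi\}$ is closed via sequences.
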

\begin{proof}
This is a direct consequence of the fact that, since $u$ is lower semicontinuous and $\varphi$ is $C^1$ (hence continuous), then also $u -\varphi$ is lower semicontinuous inside $\Omega$. 
Hence, if $x_k \to x_\infty\in \Omega$ and $x_k \in \{u- \varphi \leq 0\}\cap \Omega$, then
$$
(u-\varphi)(x_\infty)\leq 
 \liminf_{k\to \infty}(u-\varphi)(x_k) \leq 0.
$$ 
This proves that the set $\{u\leq \varphi\}$ is closed inside $\Omega$, thus
the set $\{u>\varphi\}\cap \Omega$ is open.
\end{proof}

Thanks to the previous result, we can now show that $u$ is harmonic away from the contact set $\{u=\varphi\}$.

\begin{corollary}
\label{cor:harm}
Let $u:\Omega \to \R$ be the minimizer of \eqref{eq:min}. Then
$$
\Delta u=0\qquad \text{inside }\{u>\varphi\}\cap \Omega. 
$$
\end{corollary}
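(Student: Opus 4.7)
The plan is to upgrade the one-sided variation argument from Proposition \ref{prop:EL} to a two-sided one on the open set $\{u>\varphi\}\cap \Omega$. Since that set is open (by the previous corollary), any point $x_0$ in it has a small ball $B_r(x_0)$ strictly contained in $\{u>\varphi\}\cap \Omega$. To prove $\Delta u=0$ in $B_r(x_0)$, I would fix an arbitrary $\psi\in C^\infty_c(B_r(x_0))$ and show that
$$
\int_\Omega \nabla u\cdot \nabla\psi\,dx=0,
$$
which (by the arbitrariness of $x_0$ and $\psi$) gives the claim.

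First I would establish a quantitative gap between $u$ and $\varphi$ on $\operatorname{supp}(\psi)$. Since $u$ has been replaced by its lower semicontinuous representative and $\varphi\in C^1$, the function $u-\varphi$ is lower semicontinuous. Restricted to the compact set $K:=\operatorname{supp}(\psi)\subset \{u>\varphi\}$, it attains its infimum, which is therefore a positive number $\delta>0$. Consequently, for every $\epsilon$ with $0<\epsilon<\delta/(1+\|\psi\|_{L^\infty})$, both competitors
$$
u_\epsilon^\pm := u\pm \epsilon\psi
$$
lie in $\mathcal K_\varphi$: the obstacle constraint $u_\epsilon^\pm\geq \varphi$ holds on $K$ by the choice of $\epsilon$, and trivially holds outside $K$ where $u_\epsilon^\pm=u\geq \varphi$; and the boundary trace is unchanged since $\psi$ is compactly supported in $\Omega$.

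Now I would exploit the two-sided admissibility. Testing the minimality of $u$ against $u_\epsilon^+$ and $u_\epsilon^-$ exactly as in the proof of Proposition \ref{prop:EL}, expanding the squares and simplifying the common term $\int_\Omega |\nabla u|^2/2$, one gets
$$
0\leq \pm\epsilon\int_\Omega \nabla u\cdot \nabla\psi\,dx+\epsilon^2\int_\Omega\frac{|\nabla\psi|^2}{2}\,dx.
$$
Dividing by $\epsilon>0$ and letting $\epsilon\to 0^+$ in each of the two inequalities yields
$$
\pm\int_\Omega \nabla u\cdot \nabla\psi\,dx\geq 0,
$$
hence the integral vanishes. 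In the sense of distributions this reads $\int \Delta u\,\psi\,dx=0$ for every $\psi\in C^\infty_c(B_r(x_0))$, so $\Delta u=0$ on $B_r(x_0)$, and covering $\{u>\varphi\}\cap \Omega$ by such balls concludes the proof.

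The only delicate step is the first one: producing a uniform positive gap $\delta$ between $u$ and $\varphi$ on $\operatorname{supp}(\psi)$, which is what allows the competitor $u-\epsilon\psi$ to remain above the obstacle. This is precisely the point where it matters that we chose the lower semicontinuous representative of $u$ — without that, pointwise positivity of $u-\varphi$ on $K$ would not upgrade to a positive infimum, and the two-sided variation would fail.
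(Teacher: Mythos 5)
Your proposal is correct and follows essentially the same route as the paper: exploit the open set $\{u>\varphi\}\cap\Omega$ and the lower semicontinuity of $u-\varphi$ to get a uniform positive gap on a compact set, which makes the two-sided perturbations $u\pm\epsilon\psi$ admissible, and then pass to the limit $\epsilon\to 0^+$ to conclude $\int_\Omega\nabla u\cdot\nabla\psi=0$ for all test functions supported in a small ball. The only cosmetic difference is that the paper phrases the two-sidedness by taking a single competitor $u+\epsilon\psi$ with $\psi$ of arbitrary sign and then substituting $-\psi$, whereas you test both signs directly; these are the same argument.
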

\begin{proof}
Fix  a ball $B_r(x_0) \subset \subset \{u>\varphi\}\cap \Omega$, and consider $\psi \in C^\infty_c(B_r(x_0))$.
For $\epsilon>0$, define the function
$u_\epsilon:=u+\epsilon\psi$.

Since $B_r(x_0) \subset \subset \{u>\varphi\}\cap \Omega$
and $u-\varphi$ is lower semicontinuous, it must attain a positive minimum inside $\overline{B_r(x_0)}$, namely
$$
\min_{\overline{B_r(x_0)}}(u-\varphi)=:c_0>0
$$
Hence, if $\epsilon>0$ is small enough (the smallness depending on $\|\psi\|_{L^\infty(B_r(x_0))}$), it follows that
\begin{multline*}
u_\epsilon(x) \geq u(x)-\epsilon\|\psi\|_{L^\infty(B_r(x_0))}
 \geq \varphi(x)+c_0-\epsilon\|\psi\|_{L^\infty(B_r(x_0))} \geq \varphi(x)\qquad \forall\, x \in B_r(x_0).
\end{multline*}
Since $u_\epsilon=u$ outside $B_r(x_0)$, this shows that $u_\epsilon$ is admissible in the minimization problem \eqref{eq:min}.
Thus, arguing as in the proof of Proposition \ref{prop:EL} we obtain
\begin{equation}
\label{eq:EL2}
0\leq \int_\Omega \nabla u\cdot \nabla \psi
=-\int_\Omega \Delta u\, \psi,
\end{equation}
where the last equality must be intended in the sense of distribution. 
Since now $\psi$ is arbitrary, we can replace $\psi$ by $-\psi$ in \eqref{eq:EL2} to obtain
$$
0\leq -\int_\Omega \Delta u\, (-\psi).
$$
Combining this inequality with \eqref{eq:EL2} we conclude that
$$
\int_\Omega \Delta u\, \psi=0\qquad \forall\,\psi \in C^\infty_c(B_r(x_0)),
$$
which implies that $\Delta u=0$ inside $B_r(x_0)$.

Since $B_r(x_0)$ was an arbitrary ball compactly contained inside $\{u>\varphi\}\cap \Omega$, this proves that $\Delta u=0$ inside $\{u>\varphi\}\cap \Omega$, as desired.
\end{proof}

\section{Optimal regularity}
In this section we prove the optimal regularity of $u$, namely $u \in C^{1,1}_{\rm loc}(\Omega)$. This result was first obtained in \cite{Fre72} (see also \cite{BK74,C98}).
As we shall discuss later, the $C^{1,1}$ regularity of $u$ is optimal.

Consider the function
$$
v:=u-\varphi, \qquad v:\Omega\to \R.
$$
Note that, thanks to Proposition \ref{prop:EL} and Corollary \ref{cor:harm}, 
\begin{equation}
\label{eq:v}
v\geq 0,\qquad \Delta v \leq -\Delta \varphi \quad \text{in $\Omega$},\qquad \Delta v=-\Delta \varphi\quad  \text{in $\{v>0\}\cap \Omega$}.
\end{equation}

\begin{theorem}
\label{thm:C11}
Let $v=u-\varphi$, where $u$ is the minimizer of \eqref{eq:min}. Assume that $\varphi \in C^{1,1}(\Omega)$.
Then
$$
v \in C^{1,1}_{\rm loc}(\Omega).
$$
\end{theorem}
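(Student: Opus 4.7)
The plan is to reduce $C^{1,1}$ regularity of $v$ to two ingredients: a quadratic growth estimate at contact points, and interior harmonic estimates for $u = v + \varphi$ in the non-contact region $\{v > 0\}$.

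\textbf{First,} I would establish quadratic growth: for every $\Omega' \subset\subset \Omega$, there exist $C_1, r_0 > 0$ such that
\[
\sup_{B_r(z)} v \leq C_1 r^2 \qquad \forall\, z \in \{v=0\}\cap \Omega',\ r \leq r_0.
\]
The proof is by comparison with the harmonic replacement $h$ of $v$ on $B_r(z)$: one has $h \geq 0$ by the maximum principle, and because $v(z)=0$ the Harnack inequality for nonnegative harmonic functions yields $\sup_{B_{r/2}(z)} h \leq C_n h(z)$. The one-sided distributional bound $\Delta v \leq -\Delta\varphi \leq \|\Delta\varphi\|_{L^\infty}=:M$ from Proposition \ref{prop:EL} can then be used to compare $v$ against quadratic barriers and control $|h(z) - v(z)|$ by $CMr^2$, which combined with the Harnack estimate gives the desired quadratic bound.

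\textbf{Second,} fix $y \in \{v > 0\} \cap \Omega''$ for a slightly smaller $\Omega''\subset\subset\Omega'$, and set $d := \dist(y, \{v=0\})$. The ball $B_d(y)$ lies inside $\{v > 0\}$, so by Corollary \ref{cor:harm} the function $u = v+\varphi$ is harmonic there. Let $\ell_y(x):=\varphi(y)+\nabla\varphi(y)\cdot(x-y)$ be the first-order Taylor polynomial of $\varphi$ at $y$; then $u-\ell_y$ is also harmonic on $B_d(y)$. Since $\varphi \in C^{1,1}$ we have $|\varphi - \ell_y|\leq Cd^2$, and by quadratic growth applied at a nearest contact point $z$ (so that $B_d(y) \subset B_{2d}(z)$) we obtain $|v| \leq 4C_1 d^2$ on $B_d(y)$. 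Hence $|u-\ell_y| \leq Cd^2$ there, and standard interior estimates for harmonic functions give
\[
|D^2 v(y)| \leq |D^2(u-\ell_y)(y)| + \|D^2\varphi\|_{L^\infty} \leq \frac{C}{d^2}\sup_{B_d(y)}|u-\ell_y| + \|D^2\varphi\|_{L^\infty} \leq C',
\]
uniformly in $y$. Combined with $D^2 v \equiv 0$ in the interior of $\{v=0\}$, and with the pointwise quadratic control $v(x) \leq C_1|x-z|^2$ at free-boundary points $z$ (where $v(z) = |\nabla v(z)| = 0$), this yields $v\in C^{1,1}_{\rm loc}(\Omega)$.

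\textbf{The main obstacle} is Step 1: only a one-sided distributional bound on $\Delta v$ is available globally, since the opposite inequality would essentially amount to the very conclusion $v \in W^{2,\infty}$ we are trying to prove (the nonnegative measure $-\Delta u$ could a priori concentrate on the free boundary). The argument above circumvents this by using only the admissible side of the bound, together with $v \geq 0$ and $v(z)=0$, transferring the information through the harmonic replacement $h$ via the Harnack inequality.
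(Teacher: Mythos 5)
Your Step 2 is essentially the paper's own proof of Theorem \ref{thm:C11} (subtract the tangent plane of $\varphi$, use harmonicity of $u$ in $B_d(y)$, the quadratic growth bound, and interior estimates for harmonic functions), and it is fine; just note that to glue across the free boundary you also need the gradient bound $|\nabla v(y)|\le C\,d$, which comes from the same interior estimate and is exactly what the paper uses in its ``Case 2''. The genuine problem is in Step 1, specifically in your closing claim that the quadratic bound follows ``using only the admissible side of the bound, together with $v\ge 0$ and $v(z)=0$''. From $\Delta v\le M:=\|\Delta\varphi\|_{L^\infty}$ you get that $v-\frac{M}{2n}|x-z|^2$ is superharmonic, hence it lies \emph{above} the harmonic function with its boundary values; evaluating at $z$ this gives $0\le h(z)\le \frac{M}{2n}r^2$, and Harnack then bounds $h$ on $B_{r/2}(z)$. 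But this says nothing yet about $\sup_{B_{r/2}(z)}v$: superharmonicity only yields $v\ge h-Cr^2$, which is the \emph{wrong} direction, and the hypotheses $\Delta v\le M$, $v\ge0$, $v(z)=0$ alone do not prevent $v$ from being much larger than $h$ at interior points (an almost-superharmonic nonnegative function vanishing at the center can still have a tall off-center spike whose harmonic replacement is small, since the replacement only feels boundary values). So the step ``combined with the Harnack estimate gives the desired quadratic bound'' is missing the comparison $v\le h+Cr^2$, and that comparison cannot be had from the one-sided information.

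The missing ingredient is the equation on the positivity set, i.e.\ Corollary \ref{cor:harm}: $\Delta v=-\Delta\varphi\ge -M$ in $\{v>0\}$ (see \eqref{eq:v}). With it the repair is short: set $\phi:=h+\frac{M}{2n}\bigl(r^2-|x-z|^2\bigr)\ge0$ and $D:=\{v>\phi\}\cap B_r(z)$; since $\phi\ge0$ one has $D\subset\{v>0\}$, so $\Delta(v-\phi)\ge -M+M=0$ in $D$, while $v-\phi\le0$ on $\partial D$ (on $\partial B_r(z)$ one has $v=h=\phi$), hence $D=\emptyset$ by the maximum principle and $v\le h+\frac{M}{2n}r^2$ in $B_r(z)$; combined with the Harnack bound on $h$ this gives $\sup_{B_{r/2}(z)}v\le CMr^2$. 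Once this is inserted, your Step 1 is a correct and standard variant of the paper's Lemma \ref{lem:quadr}: where you use harmonic replacement plus Harnack, the paper uses the super-mean-value inequality (from $\Delta v\le M$) centered at the nearest free boundary point together with the sub-mean-value inequality (from $\Delta v=-\Delta\varphi$ in $\{v>0\}$) at the point itself. In particular, both arguments need the two pieces of information in the same places — the one-sided bound across the contact set and the full equation inside $\{v>0\}$ — and what you correctly avoid (as does the paper) is any two-sided bound on $\Delta v$ across the free boundary, which would indeed presuppose the conclusion.
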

Of course, since $\varphi \in C^{1,1}(\Omega)$ by assumption, the  theorem above implies that also $u$ belongs to $C^{1,1}_{\rm loc}(\Omega)$.

To prove the result, we first show that $v$ grows at most quadratically away from the contact set.
\begin{lemma}
\label{lem:quadr}
Let $v=u-\varphi$, where $u$ is the minimizer of \eqref{eq:min}. Assume that $\Delta\varphi \in L^\infty(\Omega)$. 
Then there exists a constant $C'$, depending only on $n$ and $\|\Delta\varphi\|_{L^\infty(\Omega)}$, such that
\begin{equation}
\label{eq:r2 above}
0\leq v(x)\leq C'{\rm dist}(x,\partial \{v>0\})^2
\end{equation}
for all points $x\in\{v>0\}\cap\Omega$ such that ${\rm dist}(x,\partial\{v>0\})\leq \frac13{\rm dist}(x,\partial\Omega).$
\end{lemma}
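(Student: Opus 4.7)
The plan is to compare $v$ on a ball around $x_0$ to a nonnegative harmonic function and then use the Harnack inequality, exploiting the existence of a nearby free boundary point where $v$ vanishes.

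Set $d := \operatorname{dist}(x_0,\partial\{v>0\})$. The hypothesis $d \leq \tfrac{1}{3}\operatorname{dist}(x_0,\partial\Omega)$ guarantees that $B := B_{2d}(x_0)$ is compactly contained in $\Omega$. Using the lower semicontinuity of $v$ and the closedness of $\partial\{v>0\}$, I would pick $y_0 \in \partial\{v>0\}$ with $|y_0-x_0| = d$ and $v(y_0) = 0$; the key geometric feature is that $y_0$ lies in the \emph{interior} of $B$. On $B$, I decompose $v = h + w$, where $w$ solves $\Delta w = \Delta v$ in $B$ with $w=0$ on $\partial B$ and $h := v - w$ is harmonic in $B$. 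Comparing $w$ with the quadratic barrier $\tfrac{M}{2n}(4d^2-|x-x_0|^2)$, where $M$ is a pointwise bound for $|\Delta v|$ on $B$, yields $\|w\|_{L^\infty(B)}\leq C\, Md^2$. Since $v\geq 0$ and $w = 0$ on $\partial B$, the maximum principle gives $h\geq 0$ in $B$; evaluating the decomposition at $y_0$ then yields $h(y_0) = -w(y_0) \leq C\, Md^2$. Applying the Harnack inequality to the nonnegative harmonic $h$ on the compactly contained subset $\overline{B_d(x_0)}\subset B$, which contains $y_0$, one obtains $h(x_0) \leq C_n\, h(y_0) \leq C_n C\, M d^2$. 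Combining with $|w(x_0)|\leq C\, Md^2$ yields the claimed quadratic bound.

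The main obstacle is the pointwise $L^\infty$ control on $\Delta v$ across \emph{all} of $B$, including the coincidence set $\{v=0\}$. The identity $\Delta v = -\Delta\varphi$ from Corollary~\ref{cor:harm} is only available on $\{v>0\}$; a priori $\Delta v$ could be a singular measure on $\partial\{v>0\}$. I would establish the bound $|\Delta v|\leq C\|\Delta\varphi\|_{L^\infty(\Omega)}$ (a Lewy-Stampacchia-type estimate) by penalization: replace the constraint $u\geq\varphi$ by a smooth one-sided penalty with small parameter $\epsilon$, read off $|\Delta u_\epsilon|\leq \|\Delta\varphi\|_{L^\infty}$ from the resulting unconstrained Euler-Lagrange equation (by evaluating at an interior extremum of $u_\epsilon-\varphi$), and pass to the limit $\epsilon\to 0$. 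One might hope to avoid this by staying inside the smaller ball $B_d(x_0)\subset\{v>0\}$, where $\Delta v = -\Delta\varphi$ is automatic, but then the candidate free boundary point $y_0$ sits on $\partial B_d(x_0)$ and Harnack's inequality degenerates near the boundary, so the enlarged-ball strategy is what makes the decomposition-plus-Harnack argument close cleanly.
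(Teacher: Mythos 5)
Your argument is correct in outline, but it takes a genuinely different and heavier route than the paper. Your scheme (split $v=h+w$ on $B_{2d}(x_0)$ with $\Delta w=\Delta v$, $w|_{\partial B}=0$, bound $w$ by a quadratic barrier, then run Harnack on the nonnegative harmonic part $h$ between $x_0$ and the free boundary point $y_0$) closes only once you know a \emph{two-sided} pointwise bound $|\Delta v|\leq C\|\Delta\varphi\|_{L^\infty}$ across the contact set, i.e.\ a Lewy--Stampacchia estimate, which you propose to obtain by penalization. That estimate is true and classical, but it is a substantial auxiliary result not available at this point in the paper, and your sketch of it needs care: with $\Delta\varphi$ merely in $L^\infty$ you cannot literally ``evaluate at an interior extremum of $u_\epsilon-\varphi$'', so one must first smooth $\varphi$ or argue via the weak maximum principle on the open set where the penalty term drops below $-\|\Delta\varphi\|_{L^\infty}$, and one must also handle the behavior at $\partial\Omega$ and the passage $u_\epsilon\to u$. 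The paper avoids all of this: it only uses the one-sided distributional inequality $\Delta v\leq-\Delta\varphi$, valid in \emph{all} of $\Omega$ by Proposition \ref{prop:EL} (so a possible singular part of $\Delta v$ on $\partial\{v>0\}$ is harmless, since it has the good sign), together with $\Delta v=-\Delta\varphi$ in $\{v>0\}$ from Corollary \ref{cor:harm}. Concretely, it applies the super-mean-value inequality \eqref{eq:super harm} to $v-C_0\tfrac{|x-y|^2}{2n}$ at the free boundary point $\bar y$ on the doubled ball $B_{2\bar r}(\bar y)\subset\Omega$, giving $\negint_{B_{2\bar r}(\bar y)}v\leq 4C_1\bar r^2$, and the sub-mean-value inequality \eqref{eq:sub harm} at $\bar x$ on $B_{\bar r}(\bar x)\subset\{v>0\}$, where the equation does hold; since $v\geq 0$ and $B_{\bar r}(\bar x)\subset B_{2\bar r}(\bar y)$, comparing the two averages yields \eqref{eq:r2 above} directly. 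In other words, the ``stay inside $B_d(x_0)$'' option you discarded does work if one replaces Harnack by mean-value comparisons: the degenerate-boundary issue disappears because the one-sided estimate at $\bar y$ is taken on a larger ball rather than on the positivity set. Your approach buys a statement (the $L^\infty$ bound on $\Delta u$) of independent interest, but at the price of the penalization machinery; the paper's proof is self-contained, elementary, and needs no control on the negative part of $\Delta v$.
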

\begin{proof}
To simplify the notation set $C_0:=\|\Delta\varphi\|_{L^\infty(\Omega)}$.
We begin by recalling the mean value formula for (super/sub)harmonic functions:
\begin{equation}
\label{eq:super harm}
\Delta w \leq 0\quad \text{in }B_R(x)\qquad \Rightarrow\qquad 
w(x)\geq \negint_{B_r(x)}w\qquad \forall\,r\in (0,R),
\end{equation}
\begin{equation}
\label{eq:sub harm}
\Delta w \geq  0\quad \text{in }B_R(x)\qquad \Rightarrow\qquad 
w(x)\leq  \negint_{B_r(x)}w\qquad \forall\,r\in (0,R).
\end{equation}
Since $\Delta v \leq -\Delta\varphi$ inside $\Omega$ (see \eqref{eq:v}), we can apply \eqref{eq:super harm} to the function $w(y):=v(y)-C_0\frac{|x-y|^2}{2n}$ to obtain
\begin{multline}
\label{eq:super harm2}
v(x)\geq \negint_{B_r(x)}\biggl(v(y)-C_0\frac{|x-y|^2}{2n}\biggr)\,dy\\
\geq
\negint_{B_r(x)} v -C_0\frac{r^2}{2n}\qquad\text{whenever }B_r(x)\subset \Omega.
\end{multline}
To simplify the notation, set $C_1:=\frac{C_0}{2n}$. Then \eqref{eq:super harm2} can be rewritten as
\begin{equation}
\label{eq:super harm2 bis}
v(x)\geq 
\negint_{B_r(x)} v -C_1r^2\qquad \forall\,r \in (0,{\rm dist}(x,\partial\Omega)).
\end{equation}
On the other hand, since $\Delta v = -\Delta\varphi$ inside $\{v>0\}\cap \Omega$ (see \eqref{eq:v}),
we can apply \eqref{eq:sub harm}  to the function $w(y):=v(y)+C_0\frac{|x-y|^2}{2n}$ to get
\begin{equation}
\label{eq:harm2}
v(x)\leq 
\negint_{B_r(x)} v +C_1r^2\qquad \text{whenever }B_r(x)\subset \{v>0\}\cap \Omega.
\end{equation}
Now, given a point $\bar x \in \{v>0\}\cap \Omega$ such that ${\rm dist}(\bar x,\partial\{v>0\})\leq \frac13{\rm dist}(\bar x,\partial\Omega)$, we set
$$
{\bar r}:={\rm dist}(\bar x,\partial\{v>0\}),
\qquad {\bar y}:=\partial B_{{\bar r}}(\bar x)\cap \partial\{v>0\}.
$$
In this way $|{\bar y}-\bar x|={\bar r}$, and we can apply \eqref{eq:super harm2 bis} at ${\bar y}$ with $r=2{\bar r}$ to get
\begin{equation}
\label{eq:super harm3}
0=v({\bar y})\geq \negint_{B_{2{\bar r}}({\bar y})} v -4C_1\bar r^2.
\end{equation}
(note that, since ${\rm dist}(\bar x,\partial\{v>0\})\leq \frac13{\rm dist}(\bar x,\partial\Omega)$, $B_{2{\bar r}}({\bar y})\subset \Omega$).
On the other hand, it follows by \eqref{eq:harm2} applied at $\bar x$ with $r={\bar r}$ that
\begin{equation}
\label{eq:harm3}
v(\bar x)\leq 
\negint_{B_{{\bar r}}(\bar x)} v +C_1\bar r^2.
\end{equation}
Hence, since $v\geq 0$ and $B_{\bar x}(\bar x)\subset B_{2{\bar r}}(\bar y)$, combining \eqref{eq:super harm3} and \eqref{eq:harm3}
we get
\begin{multline*}
v(\bar x)-C_1\bar r^2\leq \negint_{B_{{\bar r}}(\bar x)}v =\frac{1}{|B_{\bar r}|} \int_{B_{{\bar r}}(\bar x)} v
\leq \frac{1}{|B_{\bar r}|} \int_{B_{2{\bar r}}(\bar y)} v\\
=\frac{|B_{2\bar r}|}{|B_{\bar r}|} \negint_{B_{2{\bar r}}({\bar y})} v =
2^n\negint_{B_{2{\bar r}}({\bar y})} v \leq 2^{n+2}C_1\bar r^2.
\end{multline*}
Recalling the definition of $\bar r$, this proves \eqref{eq:r2 above}.
\end{proof}

\begin{proof}[Proof of Theorem \ref{thm:C11}]
We begin by recalling the interior estimates for harmonic functions (see for instance \cite[Chapter 2.2.3, Theorem 7]{Ev}): if $\Delta w=0$ inside $B_r(x)$, then
\begin{equation}
\label{eq:elliptic}
r|\nabla w(x)|+ r^2|D^2w(x)|\leq C(n) \|w\|_{L^\infty(B_r(x))}.
\end{equation}
Let $x \in \{v>0\}\cap \Omega$,  assume that ${\rm dist}(x,\partial\{v>0\})\leq {\rm dist}(x,\partial\Omega),$
and set ${r}:={\rm dist}(x,\partial\{v>0\})$.
Consider the function
$$
w(y):=u(y)-\varphi(x)- \nabla\varphi(x)\cdot (y-x)
$$
and note that 
$\Delta w=0$  inside $B_{r}(x)$ (see Corollary \ref{cor:harm}).
Also, since $\varphi \in C^{1,1}(\Omega)$, 
\begin{equation}
\label{eq:w v}
\begin{split}
|w(y)-v(y)|&= |\varphi(y)-\varphi(x)-\nabla\varphi(x)\cdot (y-x)|\\
&\le\frac12 \int_0^1|D^2\varphi|(tx+(1-t)y)\,dt\,|y-x|^2\\
&\leq \frac12\|D^2\varphi\|_{L^\infty(\Omega)}r^2 \qquad \forall\,y \in B_{r}(x).
\end{split}
\end{equation}
Hence, applying \eqref{eq:elliptic} to 
$w$ inside $B_{r}(x)$,
 using \eqref{eq:r2 above} and \eqref{eq:w v} we obtain 
$$
\frac{|\nabla w(x)|}{r}+|D^2w(x)|\leq C(n)\Bigl(\frac12\|D^2\varphi\|_{L^\infty(\Omega)}+C'\Bigr),
$$
provided $r={\rm dist}(x,\partial\{v>0\})\leq \frac13{\rm dist}(x,\partial\Omega)$.
Noticing that $\nabla v(x)=\nabla w(x)$ and that $D^2v=D^2w-D^2\varphi$, this  yields
\begin{equation}
\label{eq:v hat C}
\frac{|\nabla v(x)|}{r}+|D^2v(x)|\leq C(n)\Bigl(\frac12\|D^2\varphi\|_{L^\infty(\Omega)}+C'\Bigr)+\|D^2\varphi\|_{L^\infty(\Omega)}=:\hat C.
\end{equation}
Hence, if we set
$$
\hat \Omega_{\tau}:=\biggl\{x \in \{v>0\}\cap\Omega\,:\,{\rm dist}(x,\partial\{v>0\})\leq \tau\,{\rm dist}(x,\partial\Omega)\biggr\},
$$
$$
\hat \Omega_{\tau}':=\biggl\{x \in \Omega\,:\,{\rm dist}(x,\{v=0\})\leq \tau\,{\rm dist}(x,\partial\Omega)\biggr\}=\hat\Omega_\tau \cup\bigl(\{v=0\}\cap \Omega\bigr),
$$
then \eqref{eq:v hat C} proves that the function
$$
V(x):=\frac{|\nabla v(x)|}{{\rm dist}(x,\partial \{v>0\})}+|D^2v(x)|
$$
 is uniformly bounded inside $\hat \Omega_{1/3}$
 by a constant $\hat C$.

To conclude the proof, we note that the boundedness of $V$  and \eqref{eq:r2 above} imply that 
$$
|v(x)|\leq C'{\rm dist}(x,\partial \{v>0\})^2\quad \text{and}\quad |\nabla v(x)|\leq \hat C\, {\rm dist}(x,\partial \{v>0\})\qquad \forall\, x\in \hat \Omega_{1/3},
$$
therefore
$$
v(x)\to 0 \quad \text{and}\quad \nabla v(x)\to 0\qquad \text{as ${\rm dist}(x,\partial \{v>0\})\to 0$}.
$$  
Thus, both $v$ and $\nabla v$ extend continuously to zero inside the set $\{v=0\}$, therefore $v \in C^1(\hat \Omega_{1/3}')$.
We now prove that $\nabla v$ is locally Lipschitz.

To this aim, we consider $x,y \in \hat\Omega_{1/6}'$. If $x,y \in \{v=0\}$  then $\nabla v(x)=\nabla v(y)=0$ and the result is trivially true. So we can assume that at least one of the two points belongs to $\{v>0\}$,
and that (with no loss of generality)
\begin{equation}
\label{eq:wlog dist}
{\rm dist}(x,\partial \{v>0\})\geq {\rm dist}(y,\partial \{v>0\}).
\end{equation}

We distinguish two cases.\\
{\it - Case 1:} $|x-y|\leq {\rm dist}(x,\partial \{v>0\})$. If we set $r:=|x-y|$, since $x \in \hat\Omega_{1/6}'$ it follows that $B_r(x)\subset \hat\Omega_{1/3}$. Thus, recalling that $|D^2v|\leq V\leq \hat C$ inside $\hat \Omega_{1/3}$, we can estimate
$$
|\nabla v(x)-\nabla v(y)| \leq \int_0^1|D^2v|(tx+(1-t)y)\,dt\,|x-y|\leq \hat C|x-y|.
$$
{\it - Case 2:} $|x-y|\geq {\rm dist}(x,\partial \{v>0\})$. Recalling \eqref{eq:wlog dist}, we can use again the boundedness of $V$ to deduce that
\begin{multline*}
\frac{|\nabla v(x)-\nabla v(y)|}{|x-y|}
\leq  \frac{|\nabla v(x)|+|\nabla v(y)|}{|x-y|}\\
\leq \frac{\hat C\bigl({\rm dist}(x,\partial \{v>0\})+ {\rm dist}(y,\partial \{v>0\})\bigr)}{|x-y|}\leq 2\hat C.
\end{multline*}
This proves that $\nabla v \in C^{0,1}(\hat\Omega_{1/6}')$, thus $v \in C^{1,1}(\hat\Omega_{1/6}')$.
On the other hand, because  $\Delta u=0$ inside $\{v>0\}$ and $\varphi \in C^{1,1}(\Omega)$, $v \in C^{1,1}_{\rm loc}(\Omega\setminus \{v=0\})$.

Since $\hat\Omega_{1/6}'\cup (\Omega\setminus \{v=0\})=\Omega$, the result follows.
\end{proof}

\section{A new formulation of the obstacle problem}
In the previous section we proved that $v=u-\varphi \in C^{1,1}_{\rm loc}(\Omega)$ provided the obstacle $\varphi$ is of class $C^{1,1}$.
For now on we will always make this assumption.

We now observe that, for a $C^{1,1}$ function $w$, it holds\footnote{To prove this, one can apply \cite[Corollary 1, Chapter 3.1.2]{EG}
to the Lipschitz functions $w$ and $\nabla w$
to deduce that
$$
D^2 w=0\quad \text{a.e. inside }\{\nabla w=0\}\qquad \text{and}\qquad 
\nabla w=0\quad \text{a.e. inside }\{w=0\}.
$$
Hence
$$
D^2 w=0\qquad \text{a.e. inside }\{\nabla w=0\}\cap\{w=0\}
\qquad \text{and}\qquad
|\{w=0\}\setminus\{\nabla w=0\}|=0,
$$
which implies the result.
}
$$
D^2w=0\qquad \text{a.e. inside }\{w=0\}.
$$
Applying this result
to $v$ we deduce that $\Delta v={\rm tr}(D^2v)=0$ a.e. inside $\{v=0\}$, so recalling \eqref{eq:v} we obtain that $v$ solves
\begin{equation}
\label{eq:obst var}
\Delta v=(-\Delta \varphi)\chi_{\{v>0\}}
\qquad \text{a.e. inside }\Omega.
\end{equation}
Note that, since $\Delta v \leq -\Delta \varphi$ (see \eqref{eq:v}), we also have
$$
0 \leq -\Delta\varphi \qquad \text{on }\{v=0\}.
$$
In particular, if we assume that $-\Delta \varphi>0$ then it follows by \eqref{eq:obst var} that $\Delta v$ is discontinuous across the free boundary $\partial \{v>0\}$, which implies that $v$ cannot be $C^2$ even if $\varphi$ is smooth.
This shows that, in general, the $C^{1,1}$ regularity of $v$ is optimal.
\smallskip

From now on our goal is to study the regularity of the free boundary.
We note that, for this purpose, it is necessary to make some extra assumptions on $\varphi$.
To understand this, one can make the following observation: fix a ball $B_r(x)\subset\subset \{u>\varphi\}$, consider $K\subset B_r(x)$ a compact set, and let $\psi \in C^\infty(B_r(x))$ be a nonnegative function such that $0 \leq \psi \leq 1$, $K=\{\psi=1\}$, and $D^2\psi|_K=0$.
Then, consider the function
$$
\phi:=\varphi(1-\psi)+u\psi
$$
(see Figure \ref{Pic28}).
 \begin{figure}[ht]
\includegraphics[scale=0.18]{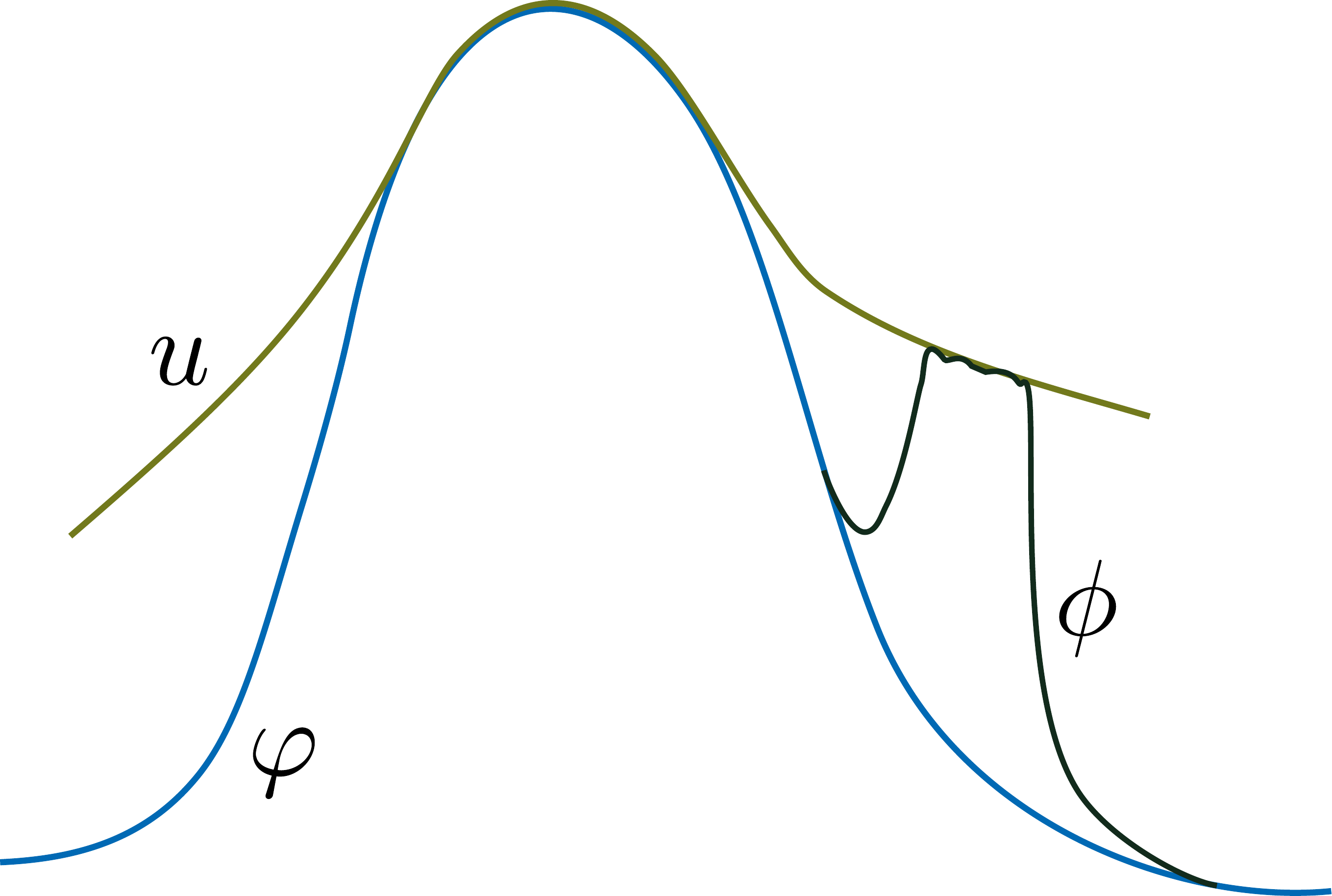} \caption{One can modify the obstacle away from the contact set so that the new contact set is very irregular.} 
  \label{Pic28}
 \end{figure}

 Note that, since $u$ is harmonic in the region $\{u>\varphi\}$, the function $\psi$ is as smooth as $\varphi$ (up to $C^\infty$ regularity).
Also, since $\varphi \leq \phi \leq u$, it holds\footnote{Recall the notation
$$
\mathcal K_\phi:=\{v \in W^{1,2}(\Omega)\,:\,v|_{\partial\Omega}=f,\,v \geq \phi\},\qquad
\mathcal K_\varphi:=\{v \in W^{1,2}(\Omega)\,:\,v|_{\partial\Omega}=f,\,v \geq \varphi\}.
$$}
$$
u \in \mathcal K_\phi\subset \mathcal K_\varphi.
$$
Hence, since $u$ minimizes the Dirichlet integral in $\mathcal K_\varphi$, so it does in $\mathcal K_\phi$,
proving that $u$ solves the obstacle problem with obstacle $\phi$. 

We now notice that 
$$
\{u=\phi\}=\{u=\varphi\}\cup K,
$$
where $K$ was an arbitrary compact set, showing that in general one cannot hope to prove any regularity of the free boundary.
However, if one analyses the example above, we note that on the set $K$ the obstacle $\phi$ has Laplacian $0$.
Hence, this example would be ruled out if we assumed that the obstacle has negative Laplacian, at least in a neighborhood of the free boundary.

For this reason,  we shall assume that $\varphi$ is smooth and that $\Delta\varphi<0.$
Actually, as noticed in \cite{C98,W99,M03,PSU12}, from the point of view of the local structure of the free boundary it suffices to understand the model case $\Delta\varphi\equiv -1$.
Thus, from now on we shall focus on the equation
$$
\Delta v=\chi_{\{v>0\}},\qquad v \geq 0.
$$
Noticing that $v|_{\partial\Omega}=u|_{\partial\Omega}-\varphi=f-\varphi$,
up to replacing $v$ by $u$ and $f-\varphi$ by $f$, our problem becomes to investigate the regularity of $\partial\{u>0\}$ 
for a function $u \in C^{1,1}_{\rm loc}(\Omega)$
 satisfying
\begin{equation}
\label{eq:obst}
\left\{
\begin{array}{ll}\Delta u=\chi_{\{u>0\}}&\text{in }\Omega,\\
u \geq 0&\text{in }\Omega,\\
u=f &\text{on }\partial\Omega,
\end{array}
\right.
\end{equation}
see Figure \ref{Pic-ell-ob}.

 \begin{figure}[ht]
\includegraphics[scale=0.25]{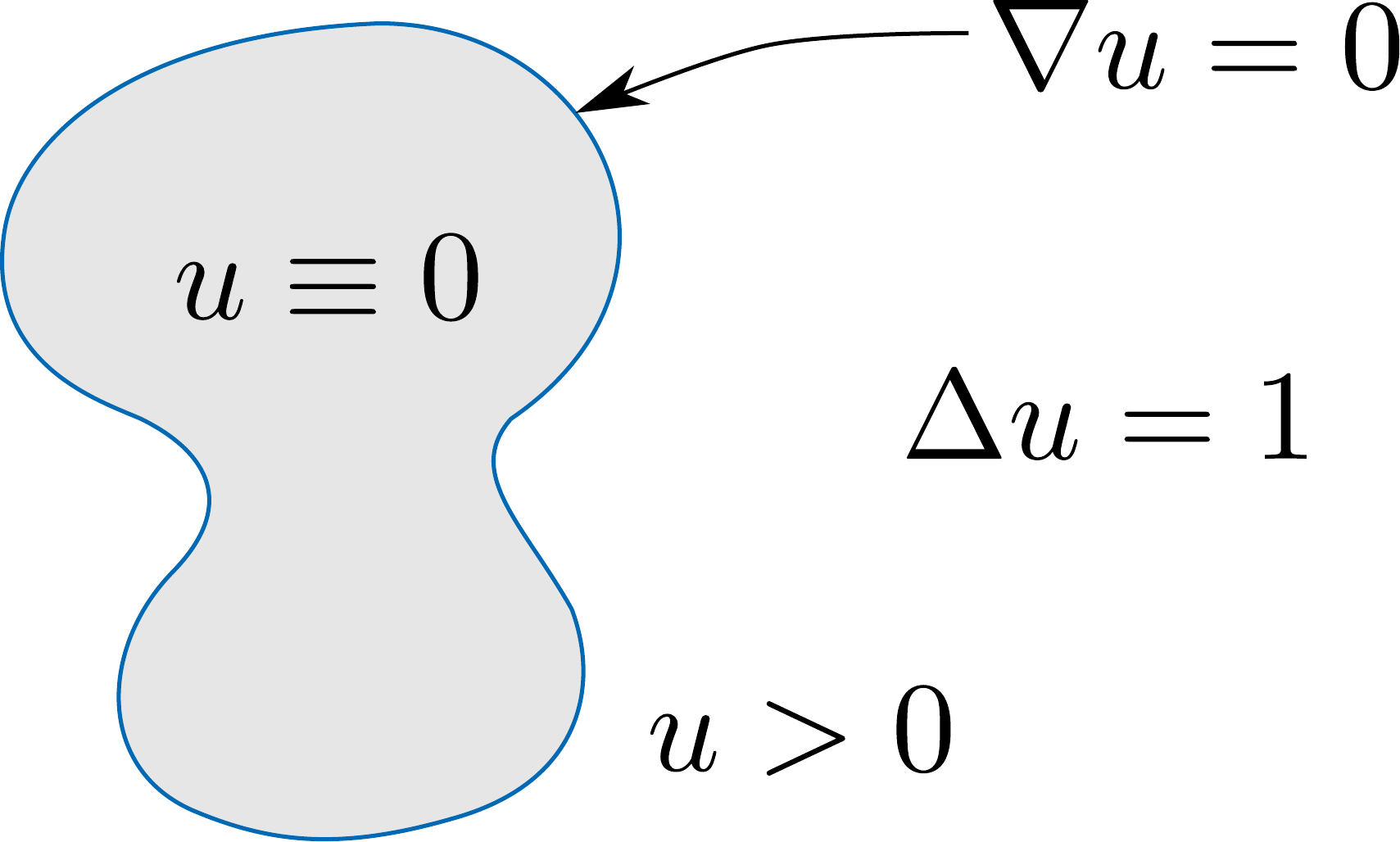} \caption{A new formulation of the obstacle problem.} 
  \label{Pic-ell-ob}
 \end{figure}

\section{Non-degeneracy of solutions}

As shown in Lemma \ref{lem:quadr},  $u$ grows at most quadratically away from the free boundary. Here
we state (and prove) the result in a slightly different form:
\begin{lemma}
\label{cor:C11}
Let $u$ solve \eqref{eq:obst},
let $\Omega'\subset \subset \Omega$,
let $x_0 \in \partial\{u>0\}$, and assume that $B_{r}(x_0)\subset \Omega'$.
Then there exists $C=C(\Omega')$ such that
$$
\sup_{B_r(x_0)}u\leq C\,r^2.
$$
\end{lemma}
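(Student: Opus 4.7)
The plan is to deduce the bound as a direct consequence of the $C^{1,1}_{\rm loc}$ regularity established in Theorem \ref{thm:C11}, by exploiting the fact that at every free boundary point both $u$ and $\nabla u$ must vanish.

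First, I would fix an intermediate domain $\Omega''$ with $\Omega' \subset\subset \Omega'' \subset\subset \Omega$. By Theorem \ref{thm:C11} applied in the current normalization (where $v$ has been renamed $u$ and $\Delta\varphi \equiv -1$), we have $u \in C^{1,1}(\Omega'')$, so there is a constant $M = M(\Omega')$ such that $|D^2 u| \leq M$ almost everywhere in $\Omega''$. In particular, $\nabla u$ is Lipschitz in $\Omega''$ with constant $M$.

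Next, I would record the two vanishing conditions at the free boundary point $x_0 \in \partial\{u>0\}$. Since $u \in C^{1,1}_{\rm loc} \subset C^0$ and $x_0$ lies in the closure of $\{u=0\}$, continuity forces $u(x_0) = 0$. Moreover, since $u \geq 0$ throughout $\Omega$ and $u(x_0)=0$, the point $x_0$ is a global minimum of the differentiable function $u$, whence $\nabla u(x_0) = 0$.

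The main step is then a second-order Taylor expansion: for any $x \in B_r(x_0) \subset \Omega'$,
\[
u(x) = u(x_0) + \nabla u(x_0)\cdot(x-x_0) + \int_0^1 (1-t)\,D^2 u\bigl(x_0 + t(x-x_0)\bigr)(x-x_0,x-x_0)\,dt.
\]
The first two terms vanish, and the integral term is bounded by $\tfrac{M}{2}|x-x_0|^2 \leq \tfrac{M}{2}r^2$. Taking the supremum over $x \in B_r(x_0)$ gives the claim with $C = M/2$.

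There is no real obstacle here: the statement is essentially a restatement of the $C^{1,1}$ bound anchored at a point where $u$ and $\nabla u$ both vanish. The only mild subtlety is ensuring that the second-derivative bound is available on a neighborhood containing $B_r(x_0)$, which is why one passes to the slightly enlarged set $\Omega''$.
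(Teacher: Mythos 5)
Your proof is correct and follows essentially the same route as the paper: a second-order Taylor expansion at $x_0$, using $u(x_0)=0$ and $\nabla u(x_0)=0$ together with the $L^\infty$ bound on $D^2u$ coming from the $C^{1,1}_{\rm loc}$ regularity. The only differences are cosmetic (you justify the vanishing of $u$ and $\nabla u$ at $x_0$ explicitly and introduce an intermediate domain $\Omega''$, whereas the paper simply takes $C=\|D^2u\|_{L^\infty(\Omega')}$), and neither affects the argument.
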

\begin{proof}
Given $x \in B_r(x_0)$, we can write $u(x)$ using the Taylor formula centered at $x_0:$
\begin{multline*}
u(x)=u(x_0)+\nabla u(x_0)\cdot (x-x_0)
+\int_0^1(1-t)D^2u\bigl(x_0+t(x-x_0)\bigr)[x-x_0,x-x_0]\,dt.
\end{multline*}
Since $u(x_0)=0$ and $\nabla u(x_0)=0$,
setting  $C:=\|D^2u\|_{L^\infty(\Omega')}$ 
we get
$$
0\leq u(x)\leq  \frac{C}{2}|x-x_0|^2\leq \frac{C}{2}r^2,
$$
as desired.
\end{proof}

As shown in \cite{C77,C98}, 
the upper bound is optimal.
\begin{proposition}
\label{prop:non deg}
Let $u$ solve \eqref{eq:obst},
let $x_0 \in \partial\{u>0\}$, and assume that $B_{r}(x_0)\subset \Omega$.
Then there exists  a dimensional constant $c=c(n)>0$ such that
$$
\sup_{B_r(x_0)}u\geq c\,r^2.
$$
\end{proposition}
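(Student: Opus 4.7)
The plan is to build an auxiliary harmonic function whose value at an interior point forces $u$ to be large on a sphere. Since $\Delta u = 1$ on $\{u>0\}$, and since $\Delta\bigl(\tfrac{|x-x_1|^2}{2n}\bigr) = 1$, the natural candidate is
\[
w(x) := u(x) - \frac{|x-x_1|^2}{2n},
\]
which satisfies $\Delta w = 0$ on the open set $\{u>0\}$, for any choice of base point $x_1$.

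The key step is to choose $x_1 \in \{u>0\}$ close to $x_0$ and apply the maximum principle to $w$ on a suitable domain. First I would fix $\rho < r$ and, using that $x_0 \in \partial\{u>0\}$, pick $x_1 \in \{u>0\}$ with $|x_1-x_0|<(r-\rho)/2$, which guarantees $\overline{B_\rho(x_1)}\subset B_r(x_0)$. I would then consider the bounded open set $D := \{u>0\}\cap B_\rho(x_1)$, on which $w$ is harmonic, and note that $w(x_1) = u(x_1) > 0$. By the maximum principle, $w$ attains its maximum on $\partial D$. This boundary splits into two pieces: points in $\partial\{u>0\}$, where $u=0$ by continuity and hence $w\le 0$; and points on $\partial B_\rho(x_1)$. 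Since the maximum is strictly positive, it must be attained at some $y \in \partial B_\rho(x_1)$, giving
\[
u(y) - \frac{\rho^2}{2n} = w(y) \geq w(x_1) = u(x_1) > 0,
\]
hence $u(y) \geq \rho^2/(2n)$ with $y \in B_r(x_0)$.

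Taking $\rho \to r$ (which is permitted by choosing $x_1$ correspondingly closer to $x_0$) yields $\sup_{B_r(x_0)}u \geq r^2/(2n)$, so one may take $c = 1/(2n)$. There is no real obstacle here: the whole argument hinges on the lucky cancellation $\Delta w = 0$ produced by subtracting $|x-x_1|^2/(2n)$, after which the maximum principle does all the work. The only minor care needed is to ensure that the extracted boundary maximizer lies inside the open ball $B_r(x_0)$, which is handled by the buffer $(r-\rho)/2$ between $x_1$ and $x_0$.
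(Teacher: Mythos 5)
Your proof is correct and follows essentially the same route as the paper: the same comparison function $u(x)-\frac{|x-x_1|^2}{2n}$, harmonic on the positivity set, combined with the maximum principle on $\{u>0\}\cap B_\rho(x_1)$ and the observation that the maximum cannot occur on the free boundary portion. The only cosmetic difference is the limiting step (you shrink $\rho\to r$ with $x_1$ approaching $x_0$, while the paper fixes the interior point first and then shifts centers with radii $r_k=r-|x_k-x_0|$), which is the same idea.
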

\begin{proof}
Assume first that $x_0 \in \{u>0\}$, and set
$$
h_{x_0}(x):=u(x)-\frac{|x-x_0|^2}{2n}.
$$
Note that 
$$
\Delta h_{x_0}=\Delta u -1 =0 \qquad \text{ inside }B_r(x_0)\cap \{u>0\},
$$
and that $h_{x_0}(x_0)=u(x_0)>0$.
Hence, by the maximum principle for harmonic functions,
\begin{equation}
\label{eq:max}
0<h_{x_0}(x_0)\leq \sup_{B_r(x_0)\cap \{u>0\}}h_{x_0}=\max_{\partial \left(B_r(x_0)\cap \{u>0\}\right)}h_{x_0}.
\end{equation}
Since $\partial \left(B_r(x_0)\cap \{u>0\}\right)=\left(\partial B_r(x_0)\cap \{u>0\}\right)\cup \left(B_r(x_0)\cap \partial\{u>0\}\right)$
and
$$
h_{x_0}\leq u=0 \qquad \text{on }B_r(x_0)\cap \partial\{u>0\}
$$
it follows that the maximum in \eqref{eq:max} is attained on $\partial B_r(x_0)\cap \{u>0\}$, that is
$$
0<\max_{\partial B_r(x_0)\cap \{u>0\}}h_{x_0}.
$$
Since
$$
h_{x_0}=u-\frac{r^2}{2n}\qquad \text{on }\partial B_r(x_0)\cap \{u>0\},
$$
this yields
$$
\frac{r^2}{2n}<\max_{\partial B_r(x_0)\cap \{u>0\}}u\leq \sup_{B_r(x_0)}u,
$$
proving the result whenever 
$x_0 \in \{u>0\}$.

Now, if $x_0 \in \partial\{u>0\}$, consider a sequence of point $\{x_k\}_{k\geq 1} \subset \{u>0\}$ such that $x_k \to x_0$, and set $r_k:=r-|x_k-x_0|$. Since
$B_{r_k}(x_k)\subset B_r(x_0)\subset \Omega$ we get
$$
\sup_{B_{r_k}(x_k)}u\geq \frac{r_k^2}{2n},
$$
and the result follows letting $k\to \infty$.
\end{proof}

\section{Blow-up analysis and Caffarelli's dichotomy}
Thanks to Corollary \ref{cor:C11} and Proposition \ref{prop:non deg}, we know that
$$
\sup_{B_r(x_0)}u\simeq r^2 \qquad \forall\, x_0\in \partial\{u>0\}\text{ with }B_r(x_0)\subset \Omega.
$$
This suggests the following rescaling:
for $x_0\in \partial\{u>0\}$ and $r>0$ small, we define the family of functions
\begin{equation}
\label{eq:blow family}
u_{x_0,r}(x):=\frac{u(x_0+rx)}{r^2}.
\end{equation}
In this way, since $u \in C^{1,1}_{\rm loc}(\Omega)$, if we take $\rho<{\rm dist}(x_0,\partial\Omega)$ we get (see Figure \ref{Pic06-7}):
\begin{enumerate}
\item[$\bullet$] $u_{x_0,r}(0)=0,\qquad \sup_{B_1}u_{x_0,r}\simeq  1\quad \forall\,r \in (0,\rho);
$
\item[$\bullet$]
$
|D^2u_{x_0,r}|(x)=|D^2u|(x_0+rx)\leq C\qquad \forall\,x \in B_{r^{-1}\rho}(0).
$
\end{enumerate}
 \begin{figure}[ht]
\includegraphics[scale=0.25]{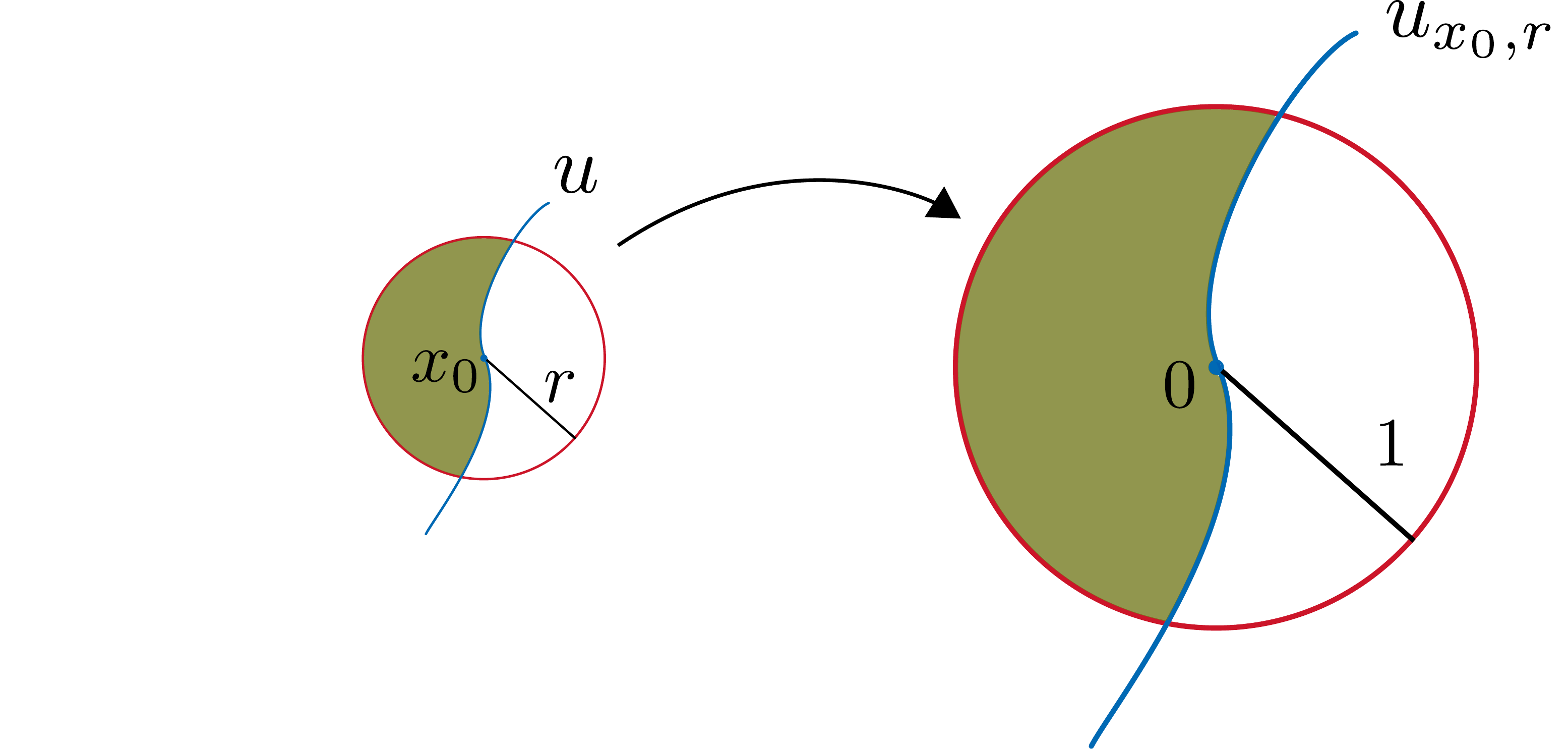}
\caption{By scaling, we look at functions of size $1$ defined inside $B_1$.} 
\label{Pic06-7}
\end{figure}
Note that, by Ascoli-Arzel\`a Theorem, the family of functions  $\{u_{x_0,r}\}_{r>0}$
are compact in $C^1_{\rm loc}$. So, one can consider a possible limit (up to subsequences) as $r\to 0^+.$ Such a limit is usually called a ``blow-up''.
Observe that, since the function $u_{x_0,r}$ is defined in the ball $B_{r^{-1}\rho}(0)$, any blow-up is defined in the whole $\R^n$.

The first goal is to classify the possible blow-ups, since they give us an idea of the infinitesimal behavior of $u$ near $x_0$.
We begin by considering two possible natural type of blow-ups that one may find.

\subsection{Regular free boundary points}
Let us first imagine that the free boundary is smooth near $x_0$, with $u>0$ on one side and $u=0$ on the other side.
In this case, as we rescale $u$ around $x_0$, we expect in the limit to see a one dimensional ``half-parabola'', see Figure \ref{Fig-blow1}.

\begin{figure}[ht]
\includegraphics[width=0.26\textwidth]{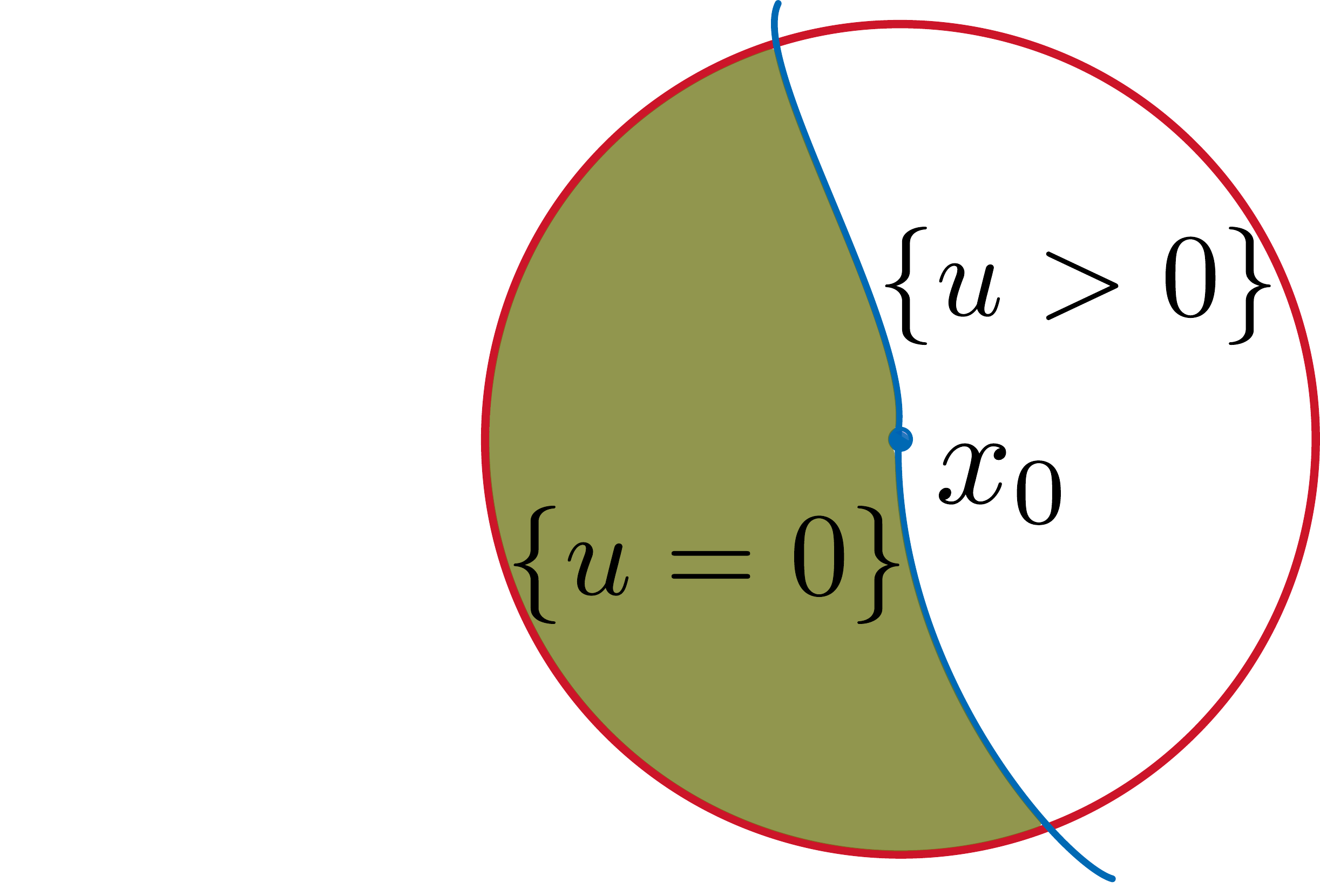}
\hspace{1cm}
\includegraphics[width=0.22\textwidth]{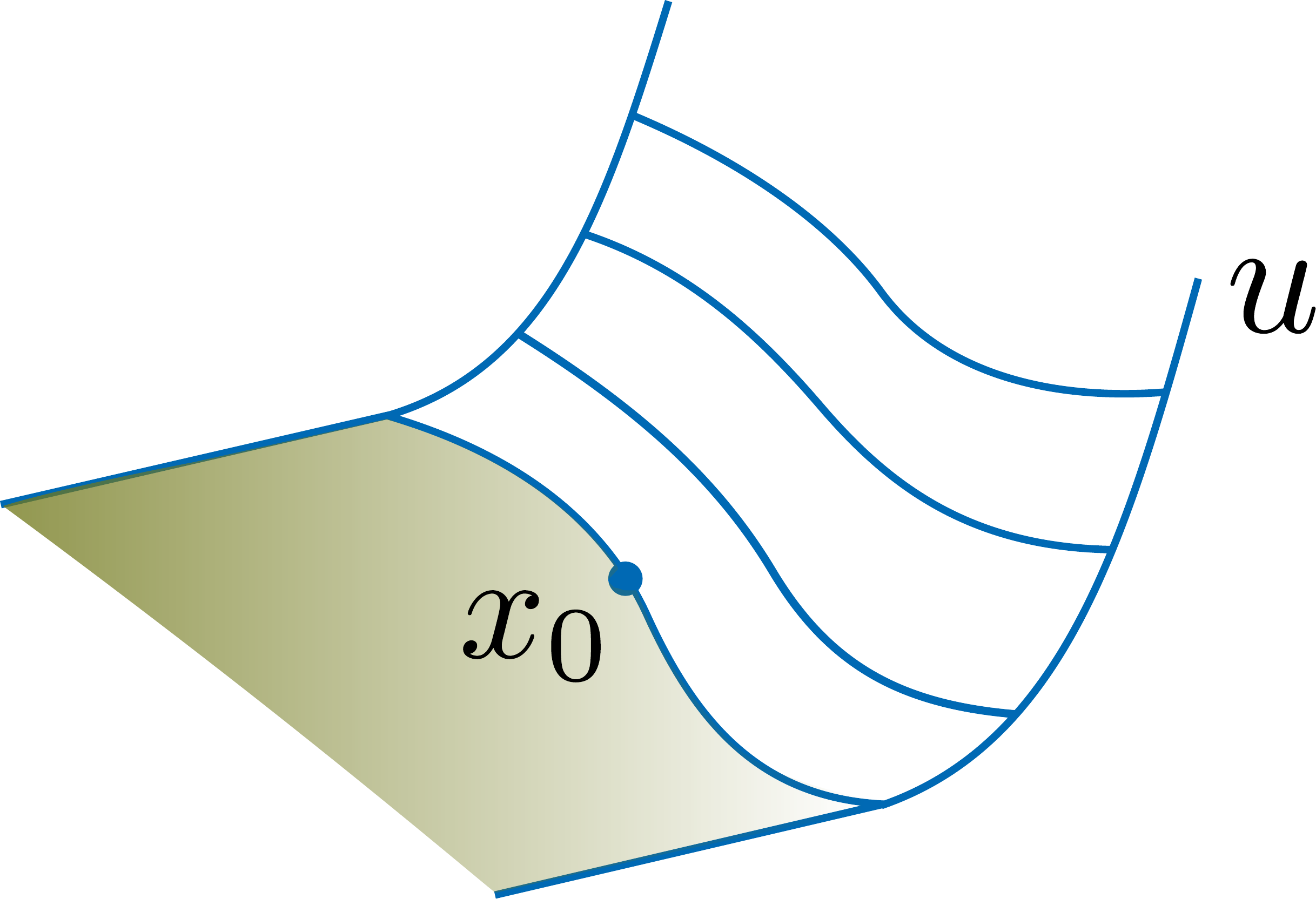}
\hspace{0.8cm}
\includegraphics[width=0.23\textwidth]{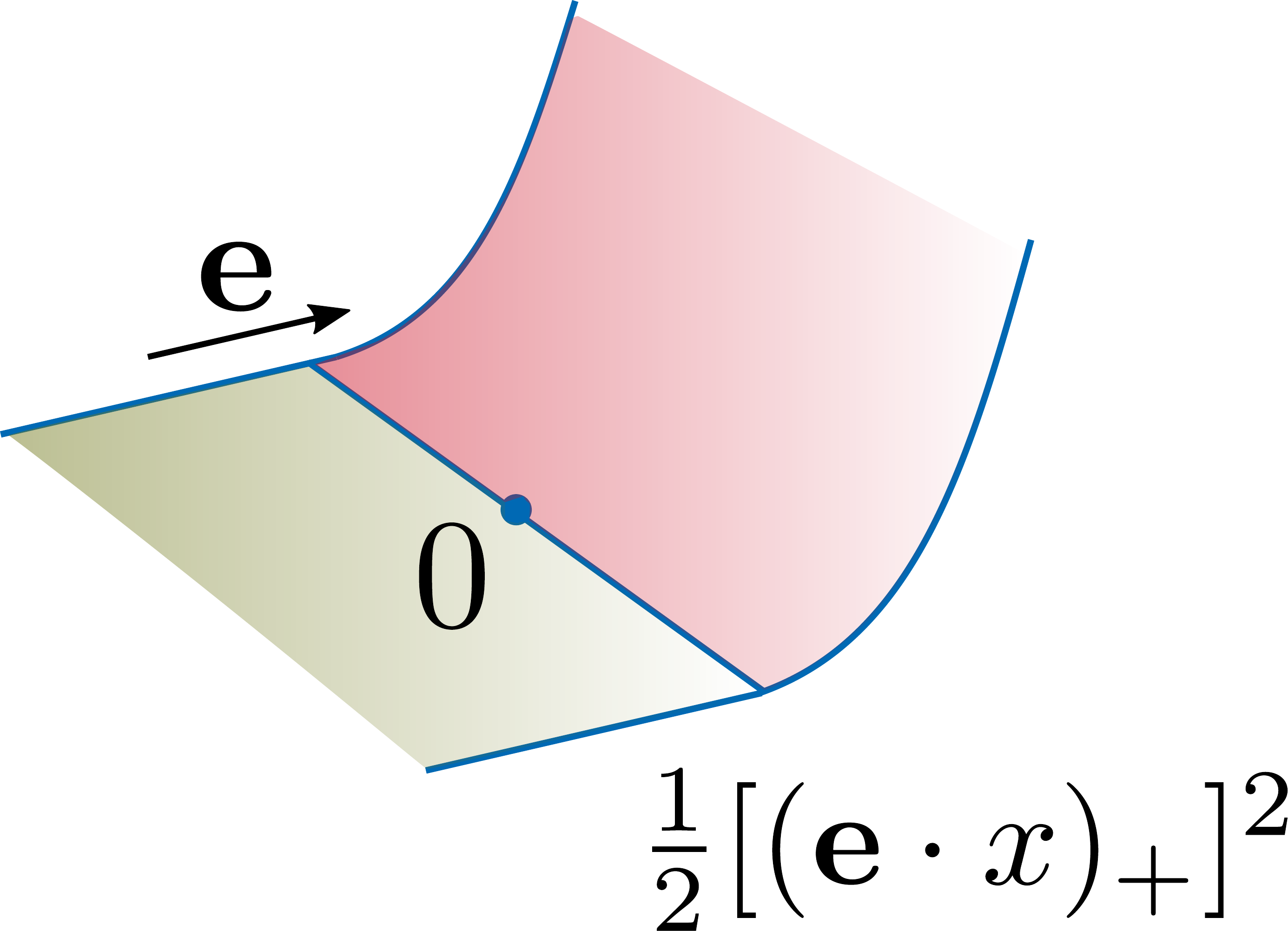}
\caption{Performing a blow-up near a ``thick'' free boundary points.}
\label{Fig-blow1}
\end{figure}

This motivates the following:
\begin{definition}
\label{def:reg pt}
A free boundary point $x_0\in \partial\{u>0\}$ is called a {\it regular point} if, up to a subsequence of radii,
$$
\frac{u(x_0+rx)}{r^2}\to \frac12 [(\mathbf{e}\cdot x)_+]^2\qquad \text{as }r\to 0^+
$$
for some unit vector $\mathbf{e}\in \mathbb S^{n-1}$.
\end{definition}

\subsection{Singular free boundary points}
Suppose now that the contact set is very narrow near $x_0$. Since $\Delta u=1$ outside of the contact set,
as
we rescale $u$ around $x_0$ we expect to see in the limit a function that has Laplacian equal to $1$ almost everywhere.
In dimension two, a natural behavior that one may expect to observe is represented in Figure \ref{Fig-blow2}.

\begin{figure}[ht]
\includegraphics[width=0.28\textwidth]{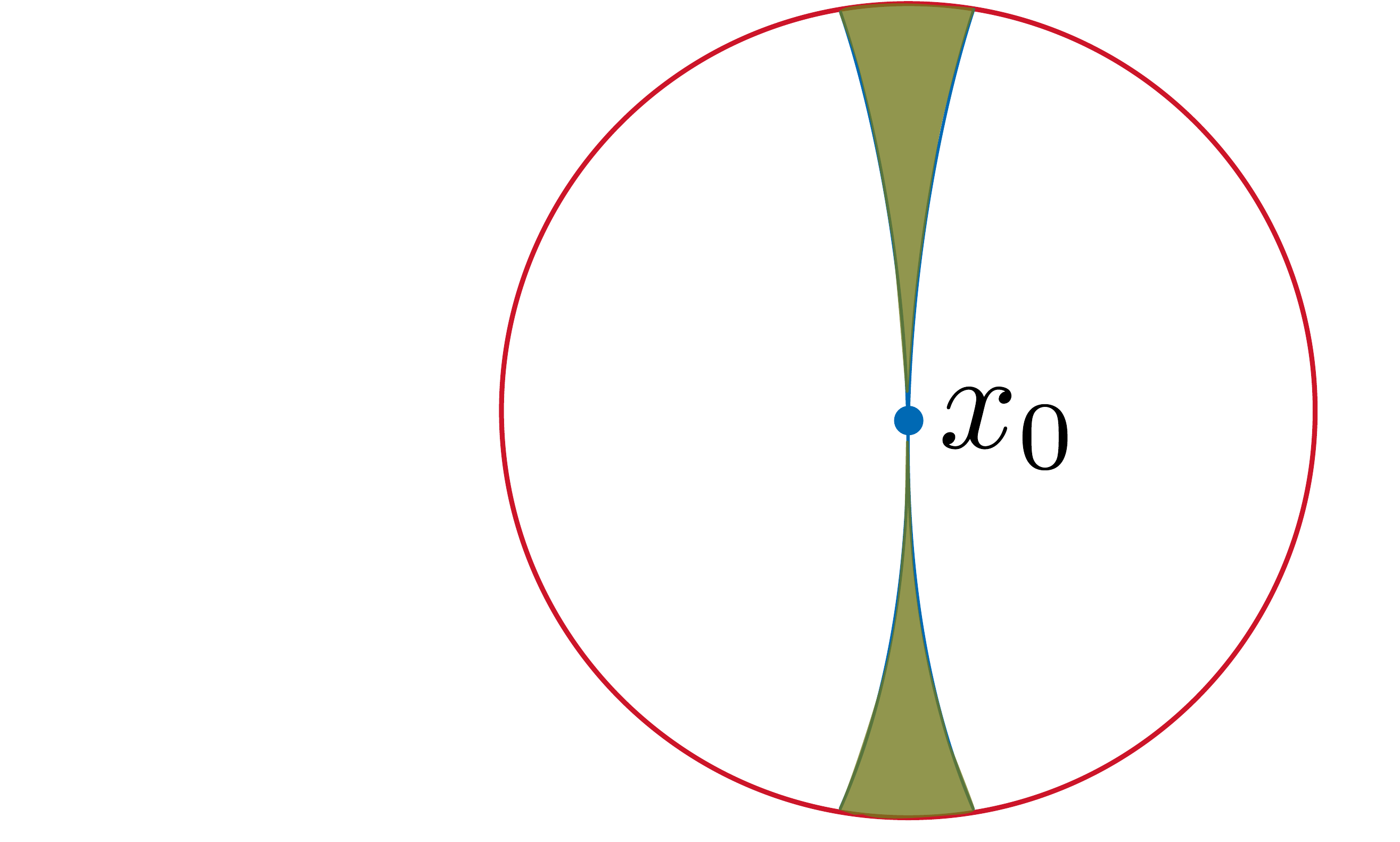}
\hspace{-0.2cm}
\includegraphics[width=0.28\textwidth]{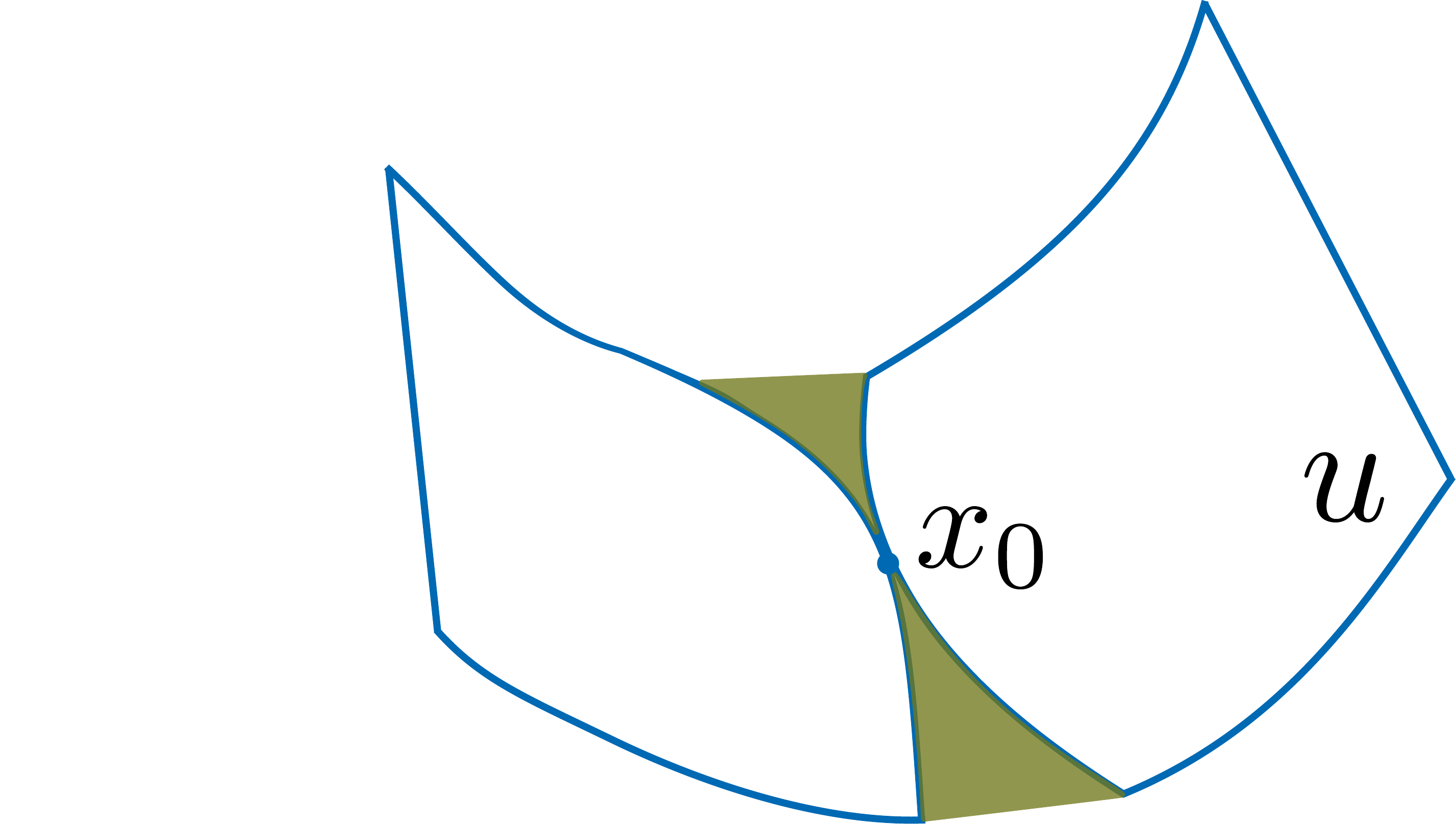}
\hspace{1cm}
\includegraphics[width=0.25\textwidth]{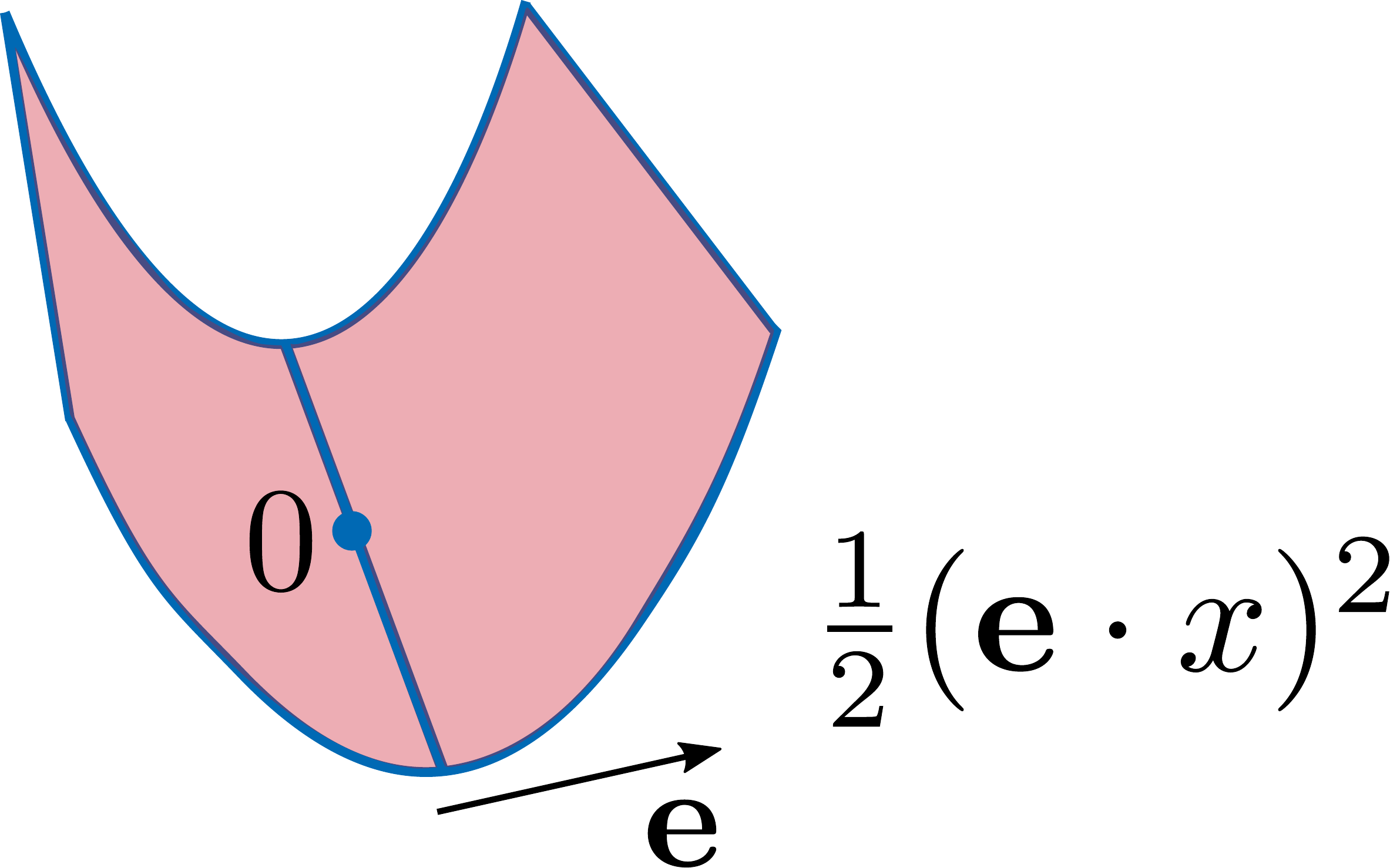}
\caption{Performing a blow-up near a ``thin'' free boundary points.}
\label{Fig-blow2}
\end{figure}

More in general, since any nonnegative quadratic polynomial with Laplacian 1 solves \eqref{eq:obst},
one introduces the following:
\begin{definition}
\label{def:sing pt}
A free boundary point $x_0\in \partial\{u>0\}$ is called a {\it singular point} if, up to a subsequence of radii,
$$
\frac{u(x_0+rx)}{r^2}\to p(x):=\frac12\langle Ax,x\rangle \qquad \text{as }r\to 0^+
$$
for some nonnegative definite matrix $A \in \R^{n\times n}$ with ${\rm tr}(A)=1$.
\end{definition} 
Note the form of the polynomial $p$ is strictly related to the shape of the contact set near $0$.
For instance, if $n=3$ and 
$p(x)=\frac{1}2(\mathbf{e}\cdot x)^2$ for some unit  vector $\mathbf{e}\in \mathbb S^{2}$, then the contact set is close to the 2-dimensional plane $\{\mathbf{e}\cdot x=0\}$, see Figure \ref{Fig-blow2bis}.
\begin{figure}[ht]
\includegraphics[width=0.25\textwidth]{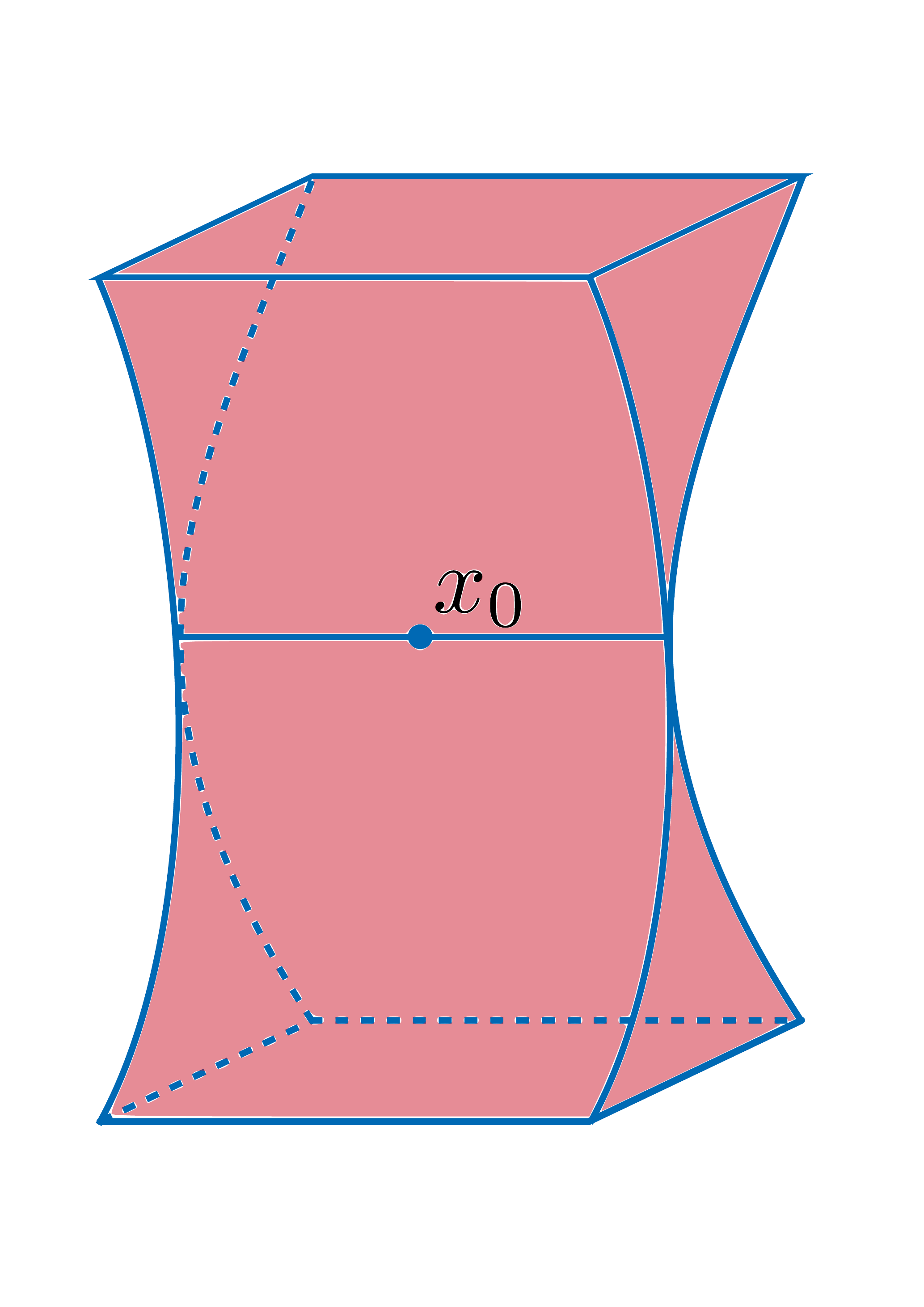}
\caption{A singular point in dimension 3 where the contact set is close to a plane.}
\label{Fig-blow2bis}
\end{figure}

 On the other hand, one may also expect to see points where the contact set is close to a line, that could correspond for instance to a polynomial of the form $p(x)=\frac{1}4(x_1^2+x_2^2)$, see Figure~\ref{Fig-blow2bis3}.
\begin{figure}[ht]
\includegraphics[width=0.1\textwidth]{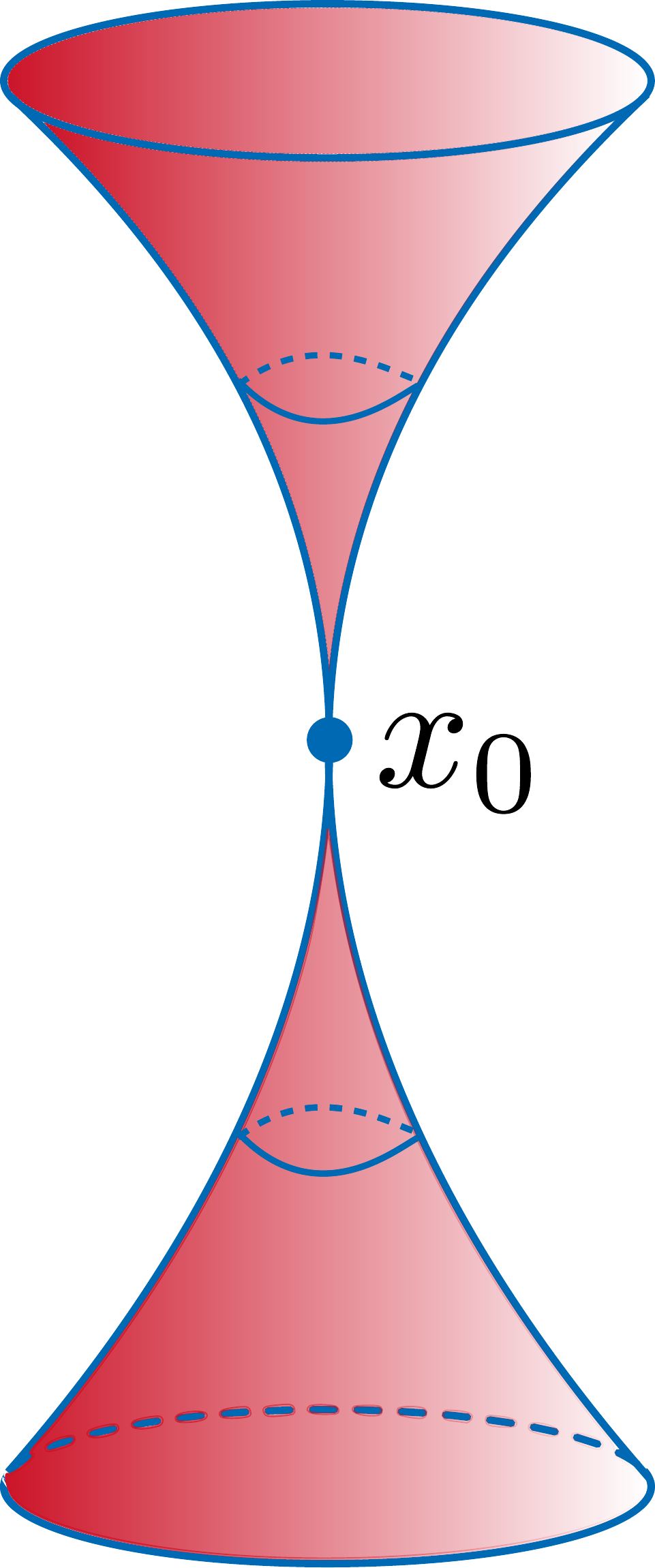}
\caption{A singular point in dimension 3 where the contact set is close to a line.}
\label{Fig-blow2bis3}
\end{figure}

\subsection{Caffarelli's dichotomy theorem}
Note that a priori the definitions of regular and singular points may not be mutually exclusive. Indeed,  a free boundary point could potentially be regular along some sequence of radii and singular along a different sequence. Also, it is not clear that regular and singular points should exhaust the whole free boundary.

These highly nontrivial and deep issues have been answered by Caffarelli in \cite{C77}:

\begin{theorem}
\label{thm:dico}
Let $u$ solve \eqref{eq:obst},
and let $x_0 \in \partial\{u>0\}\cap \Omega$. 
Then one of these two alternatives hold (see Figure \ref{Fig-reg-sing}):
\begin{enumerate}
\item[(i)]
either $x_0$ is regular, and there exists a radius $r_0>0$ such that $\partial\{u>0\}\cap B_{r_0}(x_0)$ is an analytic hypersurface consisting only of regular points;
\item[(ii)]
or $x_0$ is singular, in which case for any $r>0$ small there exists a unit vector $\mathbf e_r \in \mathbb S^{n-1}$ such that $\partial\{u>0\}\cap B_{r}(x_0)\subset \{x\,:\,|\mathbf{e}_r\cdot (x-x_0)|\leq o(r)\}$.
\end{enumerate}
\end{theorem}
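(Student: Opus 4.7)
The plan is to proceed in four stages: classify the possible blow-ups at $x_0$, derive the dichotomy from that classification, upgrade the regular alternative to a smooth neighborhood of $\partial\{u>0\}$, and establish the thin-strip bound in the singular alternative. For the first stage, a natural device is Weiss's monotonicity formula
\[
W(r) := \frac{1}{r^{n+2}}\int_{B_r(x_0)}\Bigl(\tfrac12|\nabla u|^2+u\Bigr)\,dx - \frac{1}{r^{n+3}}\int_{\partial B_r(x_0)} u^2\,d\mathcal{H}^{n-1},
\]
which is nondecreasing in $r$, with $\tfrac{d}{dr}W\equiv 0$ precisely when the rescaled function $u_{x_0,r}$ is $2$-homogeneous. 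Combined with the Ascoli--Arzel\`a compactness of $\{u_{x_0,r}\}$ afforded by the $C^{1,1}$ bound of Theorem \ref{thm:C11} and the non-degeneracy of Proposition \ref{prop:non deg}, this shows that every blow-up $u_0$ is a nontrivial, nonnegative, $2$-homogeneous global solution of $\Delta u_0=\chi_{\{u_0>0\}}$. One then argues geometrically that such a $u_0$ must be either a half-space solution $\tfrac12[(\mathbf e\cdot x)_+]^2$ or a homogeneous quadratic $\tfrac12\langle Ax,x\rangle$ with $A\geq 0$ and $\operatorname{tr} A = 1$: if $\{u_0=0\}$ has empty interior then $\Delta u_0\equiv 1$ everywhere and $2$-homogeneity forces the polynomial form; otherwise a convexity argument for the contact set (via monotonicity of second derivatives in the coincidence direction) shows $\{u_0=0\}$ is a half-space, and the equation then determines $u_0$ uniquely as the half-space profile. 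This already yields the dichotomy at the single point $x_0$.

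Next I would show that regular points are \emph{stable}. If the blow-up at $x_0$ is $\tfrac12[(\mathbf e\cdot x)_+]^2$, then $u_{x_0,r}\to \tfrac12[(\mathbf e\cdot x)_+]^2$ in $C^1_{\rm loc}$, so for some small $r_0>0$ the derivative $\partial_\nu u$ is strictly positive on an appropriate relatively open portion of $\partial B_{r_0}(x_0)$ for every $\nu$ in a small cone about $\mathbf e$. Since $\partial_\nu u$ is harmonic inside $\{u>0\}$ and vanishes on $\partial\{u>0\}$ (because $u\in C^{1,1}$ attains its minimum zero there, so $\nabla u$ vanishes), the maximum principle propagates $\partial_\nu u \geq 0$ throughout $B_{r_0}(x_0)\cap\{u>0\}$ for every such $\nu$. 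This cone monotonicity forces $\partial\{u>0\}\cap B_{r_0}(x_0)$ to be the Lipschitz graph of a function in the direction $\mathbf e$, and every point on it admits a half-space blow-up and so is itself regular.

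The main technical obstacle is upgrading this Lipschitz graph all the way to analytic. I would apply the boundary Harnack principle to the pairs $\partial_i u,\ \partial_{\mathbf e}u$, both positive and harmonic in the Lipschitz NTA domain $\{u>0\}\cap B_{r_0}(x_0)$: their ratios are $C^{0,\alpha}$ up to the boundary, and since the unit outward normal to $\partial\{u>0\}$ is (up to sign and normalisation) the vector of such ratios, this yields $C^{1,\alpha}$ regularity of the free boundary. Once $C^{1,\alpha}$ is in hand, a partial hodograph transform in direction $\mathbf e$ flattens the free boundary and turns the problem into an elliptic equation with smooth coefficients on a half-ball; Schauder theory bootstraps to $C^\infty$, and analyticity follows from the classical analyticity theorems for solutions of nonlinear elliptic boundary value problems with analytic data.

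Finally, at a singular $x_0$, every subsequential blow-up is a polynomial $p_A(x)=\tfrac12\langle Ax,x\rangle$ whose zero set $\Ker A$ is contained in a hyperplane $H_A$. To obtain the thin-strip estimate I would argue by contradiction: if it failed, there would exist $\delta>0$, a sequence $r_k\to 0$, and points $y_k\in\partial\{u>0\}\cap B_{r_k}(x_0)$ with $\dist(y_k-x_0,H)\geq \delta r_k$ for every hyperplane $H$ through the origin. Passing to a subsequence, $u_{x_0,r_k}\to p_A$ locally uniformly and $(y_k-x_0)/r_k\to z$ with $|z|\leq 1$; uniform convergence together with non-degeneracy yield Hausdorff convergence of the free boundaries on $B_1$, so $z\in\{p_A=0\}\subset H_A$, contradicting $\dist(z,H_A)\geq\delta$. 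Choosing $\mathbf e_r$ to be a unit normal to the hyperplane of the polynomial best approximating $u_{x_0,r}$ at scale $r$ then delivers the claimed $o(r)$ inclusion.
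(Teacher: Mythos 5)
Your overall architecture is legitimate and, in its first stage, genuinely different from the paper: you obtain $2$-homogeneity of blow-ups from Weiss' monotonicity formula \cite{W99} and then classify homogeneous global solutions, whereas the paper follows Caffarelli's original route, using the semiconvexity estimate $\partial_{\mathbf e\mathbf e}u\geq -\omega(r_x)$ to make every blow-up convex and then splitting according to the measure of its contact set. The price of your route is that the classification step carries real content which your sketch asserts rather than proves. First, ``$\{u_0=0\}$ has empty interior $\Rightarrow\Delta u_0\equiv 1$'' needs $|\{u_0=0\}|=0$, not just empty interior, since $\Delta u_0=\chi_{\{u_0>0\}}$ only gives $\Delta u_0=0$ a.e.\ on the contact set; one must first note (e.g.\ from Proposition \ref{prop:non deg}, which passes to blow-ups and forces $\partial\{u_0>0\}$ to be Lebesgue-null) that empty interior and measure zero are equivalent here. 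Second, ``a convexity argument\dots shows $\{u_0=0\}$ is a half-space'' is precisely the hard step that Caffarelli's semiconvexity machinery (or an analysis of $2$-homogeneous solutions on the sphere) is designed to supply; as written it is a placeholder. Incidentally, mutual exclusivity of the two profiles across different subsequences is cleanly available in your framework, since $W(0^+,u)$ is a single number and the half-space and polynomial profiles have different Weiss energies; you never address this point.

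Two further steps have genuine gaps. (a) The stability of regular points: the maximum principle does not ``propagate'' $\partial_\nu u\geq 0$ as you claim. The boundary of $\{u>0\}\cap B_{r_0}(x_0)$ consists of the free boundary (where indeed $\nabla u=0$) and the spherical part; $C^1$-closeness of $u_{x_0,r_0}$ to $\tfrac12[(\mathbf e\cdot x)_+]^2$ gives $\partial_\nu u\geq c>0$ only on the portion of the sphere at definite distance from the limiting free boundary, and merely $\partial_\nu u\geq -\epsilon r_0$ on the remaining spherical collar, so a bare comparison argument cannot conclude. This is exactly where the paper invokes the quantitative lemma of Caffarelli (\cite[Lemma 11]{C98}), which compares $\partial_\nu u$ with a suitable barrier (e.g.\ a multiple of $u$) in a domain close to a half-ball; some such lemma is unavoidable. (b) The contradiction setup for the strip estimate has the quantifiers inverted: no point $y_k$ can satisfy $\dist(y_k-x_0,H)\geq\delta r_k$ for \emph{every} hyperplane $H$ through $x_0$ (choose $H$ containing $y_k-x_0$), so the statement you negate is vacuous. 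The correct negation lets the bad point depend on the direction: there are $\delta>0$ and $r_k\to0$ such that for every unit vector $\mathbf e$ there is $y_k(\mathbf e)\in\partial\{u>0\}\cap B_{r_k}(x_0)$ with $|\mathbf e\cdot(y_k(\mathbf e)-x_0)|>\delta r_k$. One must first extract the subsequence along which $u_{x_0,r_k}\to p_A$, then apply this with $\mathbf e$ normal to a hyperplane containing $\{p_A=0\}$, and pass to the limit using locally uniform convergence (the inclusion you need does not even require Hausdorff convergence of the free boundaries). With these repairs, and with the final regularity upgrade carried out as you indicate (boundary Harnack for $C^{1,\alpha}$, then the hodograph--Legendre transform of \cite{KN77} for smoothness and analyticity, which is also the paper's route), the argument is complete.
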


\begin{figure}[ht]
\includegraphics[width=0.27\textwidth]{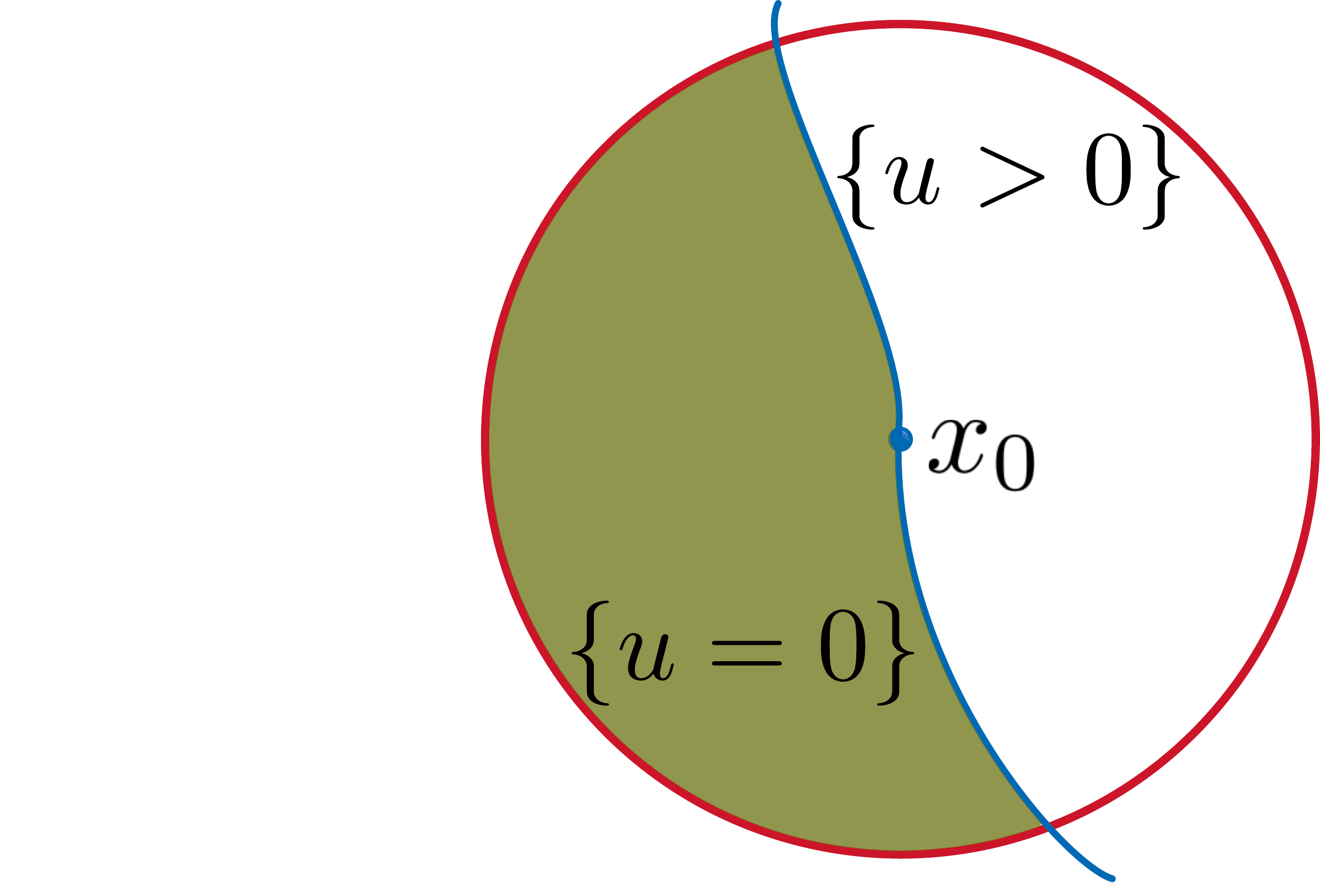}
\hspace{1.2cm}
\includegraphics[width=0.28\textwidth]{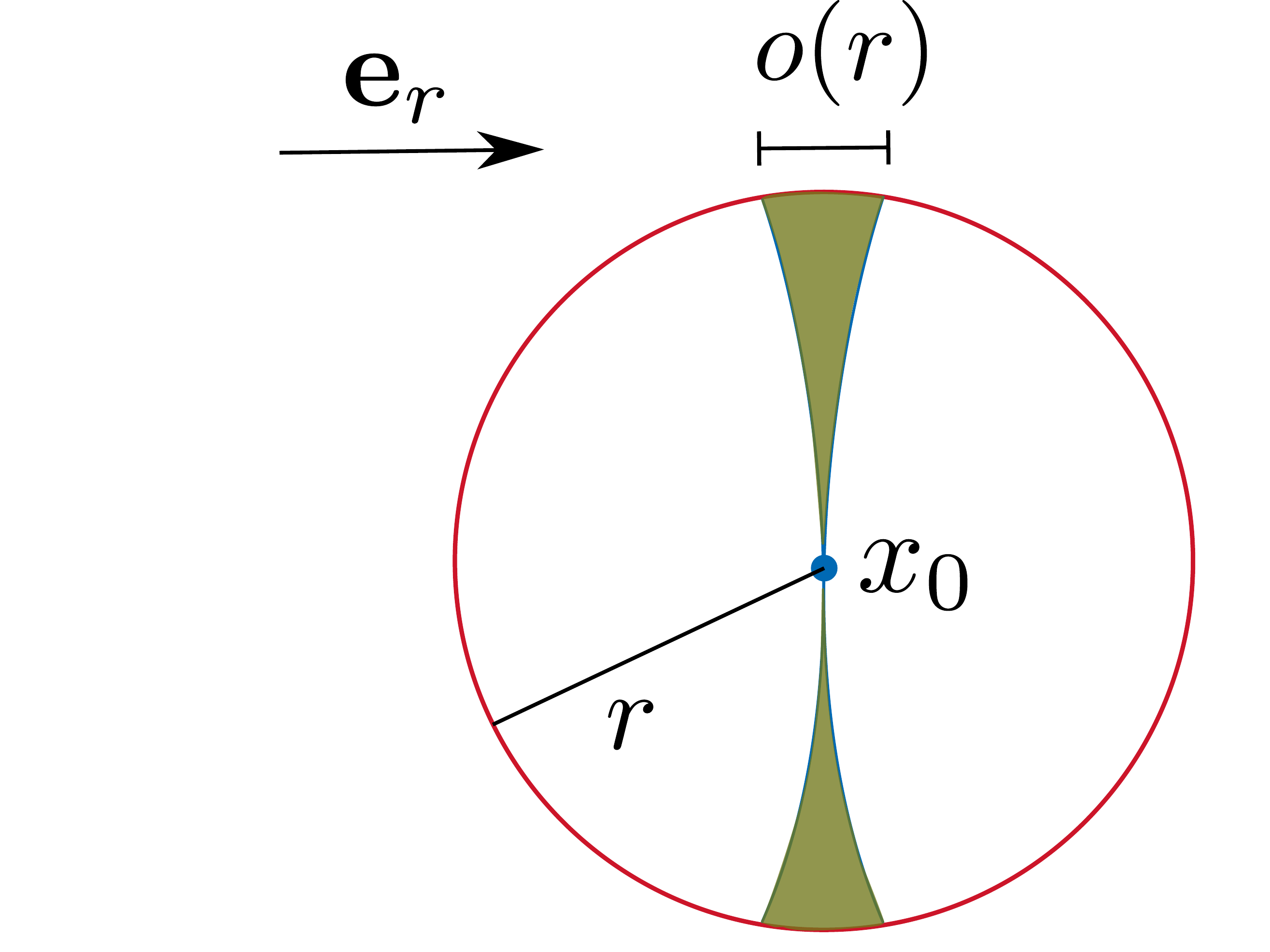}
\caption{A regular (left) and a singular (right) free boundary point.}
\label{Fig-reg-sing}
\end{figure}

\begin{proof}[Idea of the proof]
The first key ingredient is a semiconvexity estimate of the following form:
given $x \in \{u>0\}\cap \Omega$ set $r_{x}:={\rm dist}(x,\partial\{u>0\})$.
Then, for any unit vector $\mathbf e\in \mathbb S^{n-1}$,
$$ 
\partial_{\mathbf e\mathbf e}u(x)\geq -\omega(r_x),
$$
where $\omega:\R^+\to\R^+$ is a continuous increasing function such that $\omega(0)=0$ (see for instance \cite[Theorem 3]{C98} for a proof).

This fundamental bound allows one to show that blow-ups are convex.
Indeed, if $x_0 \in \partial\{u>0\}$
and $B_\rho(x_0)\subset \Omega,$
we can define $u_{x_0,r}:B_{r^{-1}\rho}(0)\to \R$ as in \eqref{eq:blow family}.
Then
$$
\partial_{\mathbf e\mathbf e}u_{x_0,r}(x)=\partial_{\mathbf e\mathbf e}u(x_0+rx)\geq -\omega(rx)\qquad \forall\,x\in B_{r^{-1}\rho}(0),\,\forall\,r>0.
$$
In particular, if we let $r\to 0^+$ and  $u_{x_0}:\R^n\to \R$ denotes a possible limit point, then
$$
\partial_{\mathbf e\mathbf e}u_{x_0}(x)\geq0 \qquad \forall\,x\in \R^n,
$$
hence $u_{x_0}\geq 0$ is convex.
One now distinguishes between two cases, depending on the properties of the contact set $\{u_{x_0}=0\}$. More precisely:

\smallskip

\noindent
$\bullet$ {\it Case 1: The contact set $\{u_{x_0}=0\}$ has positive measure.} In this case, since $0 \in \partial\{u_{x_0}>0\}$ and the set $\{u_{x_0}=0\}$ is convex, it must contain a ball $B_\sigma(\hat x)$ disjoint from the origin (see Figure \ref{Fig-contact pos}).
Consider set of directions $\mathbf w\in \mathbb S^{n-1}$ of the form $\mathbf w=-\frac{\mathbf v}{|\mathbf v|}$ with $\mathbf v \in B_{\sigma/2}(\hat x)$.
\begin{figure}[ht]
\hspace*{-2.2cm}\includegraphics[width=1.0\textwidth]
{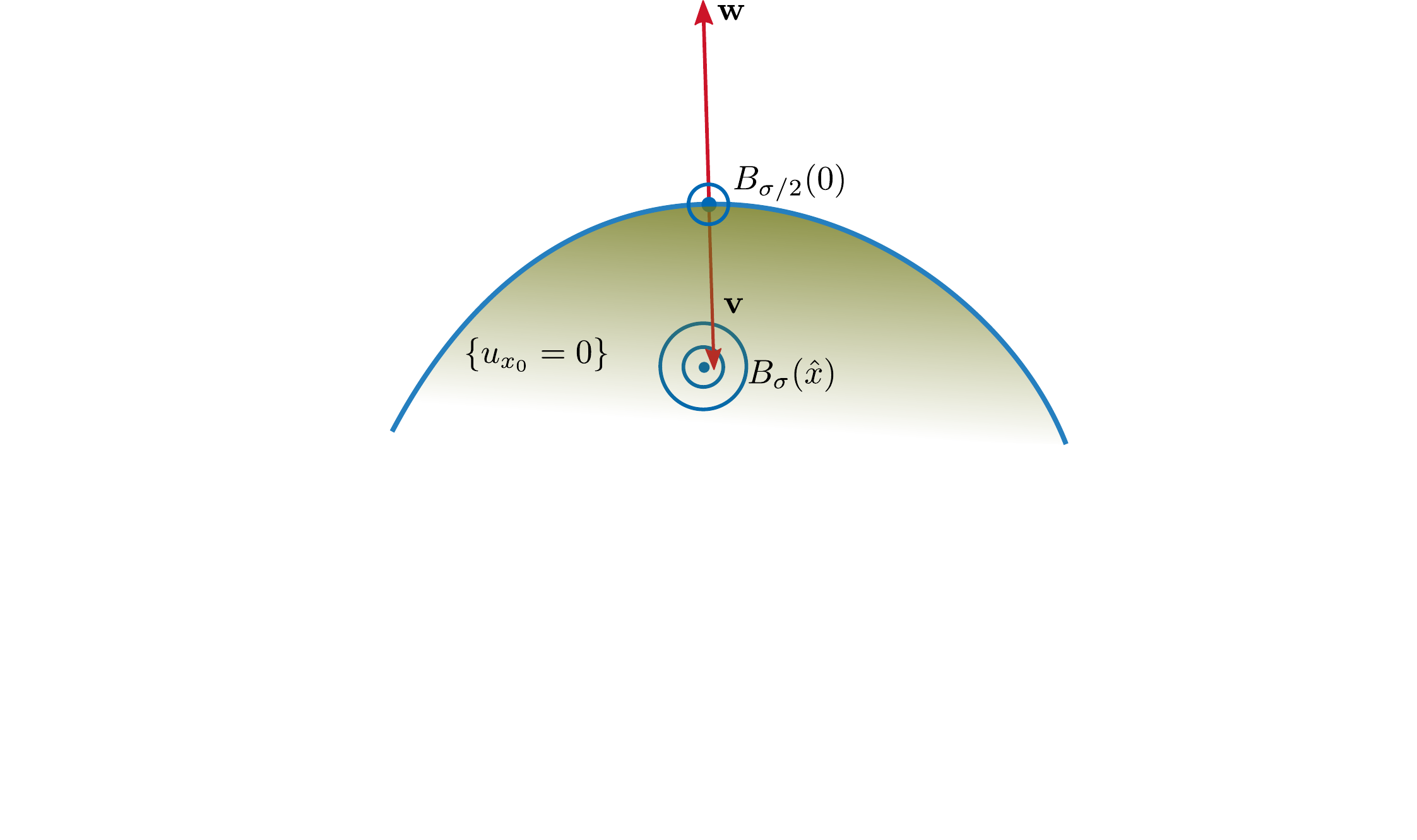}
\vspace{-4.3cm}
\caption{The contact set of $u_{x_0}$ when it has positive measure.}
\label{Fig-contact pos}
\end{figure}

We claim that
$$
\partial_{\mathbf w} u_{x_0}(x) \geq 0 \qquad \forall\,x \in B_{\sigma/2}(0).
$$
Indeed, given any point $x \in B_{\sigma/2}(0)$,
consider $\mathbf{v} \in B_{\sigma/2}(\hat x)$ and define the point
$$
y_x:=x+\mathbf{v} \in B_{\sigma}(\hat x).
$$
Note that, by the convexity
of $u_{x_0}$, it follows that
$$
\frac{d}{dt}\partial_{\mathbf w}u_{x_0}(y_x+t\mathbf w)=\frac{d^2}{dt^2}u_{x_0}(y_x+t\mathbf w)\geq 0\qquad \forall\, t \geq 0.
$$
Hence, since $\partial_{\mathbf w}u_{x_0}(y_x)=0$ (because $y_x \in B_\sigma(\hat x)\subset  \{u_{x_0}=0\}$) we get
$\partial_{\mathbf w} u_{x_0}(x)=\partial_{\mathbf w} u_{x_0}(y_x+|\mathbf v|\mathbf w)\geq 0$, as desired.

Observe now that the function $\partial_{\mathbf w}u_{x_0}$ is harmonic inside the set $\{u_{x_0}>0\}$.
In particular, unless it is identically zero
it must be strictly positive there, by the strong maximum principle. 
Thanks to this consideration and to the fact that
$\Delta u_{x_0}=1$ inside $\{u_{x_0}>0\}$,
one easily concludes that
\begin{equation}
\label{eq:monotone}
\partial_{\mathbf w} u_{x_0}(x)> 0 \qquad \forall\,x \in B_{\sigma/2}(0)\cap \{u_{x_0}>0\}.
\end{equation}
Observe now that
$$
0 \leq \frac{\partial_{\mathbf w} u_{x_0}(x)}{|\nabla u_{x_0}(x)|}=\frac{\nabla u_{x_0}(x)}{|\nabla u_{x_0}(x)|}\cdot \mathbf w,
$$
and that $\frac{\nabla u_{x_0}(x)}{|\nabla u_{x_0}(x)|}$ coincides with the normal to the level sets of $u_{x_0}$. Hence, recalling the definition of $\mathbf w$,
\eqref{eq:monotone}
implies that the normal to the level sets of $u_{x_0}$ inside $B_{\sigma/2}(0)$ belongs to the cone 
$$
\mathcal C:=\{\nu \in \mathbb S^{n-1}\,:\, \nu\cdot\mathbf v \leq  0  
\quad \forall\,\mathbf v\in B_{\sigma/2}(\hat x)\},
$$
see Figure \ref{Fig-monotone}.
\begin{figure}[ht]
\hspace*{-2cm}
\includegraphics[width=0.95\textwidth]{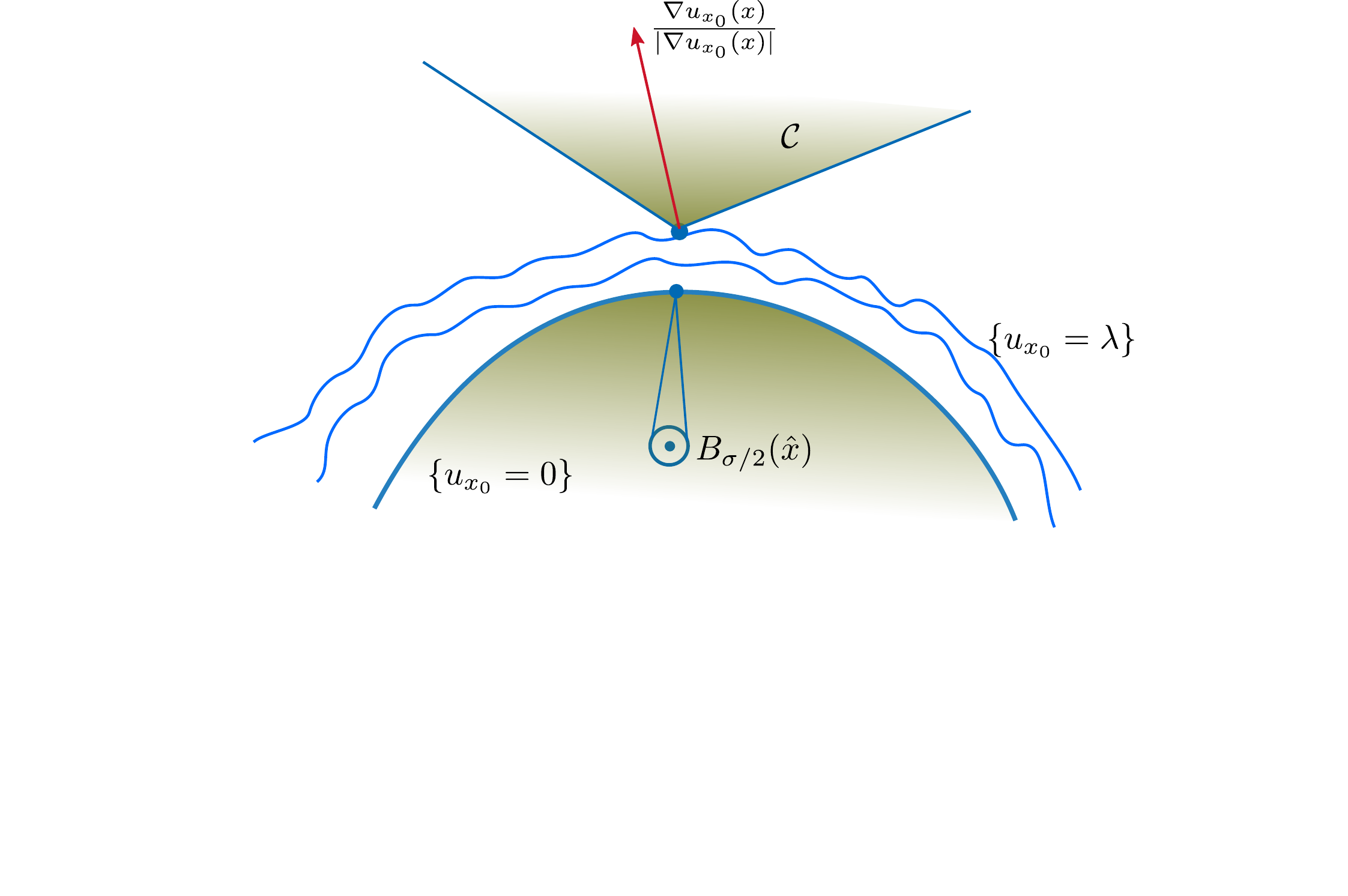}\vspace{-4cm}
\caption{Monotonicity of $u_{x_0}$ and Lipschitz regularity of its level sets.}
\label{Fig-monotone}
\end{figure}

Thus, if one looks at the level sets $\{u_{x_0}=\lambda\}_{\lambda>0}$ as graphs with respect to the hyperplane orthogonal to $\hat x$, it follows that these graphs have a bounded slope, or equivalently they are uniformly Lipschitz.
In particular, since $\partial\{u_{x_0}>0\}=\lim_{\lambda\to 0^+}\{u_{x_0}=\lambda\}$, 
the free boundary is Lipschitz continuous inside $B_{\sigma/2}(0)$.\footnote{One may note that the Lipschitz continuity of $\partial\{u_{x_0}>0\}$ is also an immediate consequence of the convexity of the contact set. However, as the reader will see in the sequel of Case 1, this argument can be generalized to prove the Lipschitz regularity of $\partial\{u_{x_0,r}>0\}$ for $r$ small enough.}

Thanks to this fact, we see that the functions $\partial_{\mathbf w} u_{x_0}$ are harmonic, strictly positive inside $B_{\sigma/2}(0)\cap \{u_{x_0}>0\}$, and vanish on the Lipschitz hypersurface $\partial\{u_{x_0}>0\}$.
Hence, by the boundary Harnack inequality for harmonic functions in Lipschitz domains (see for instance \cite[Theorem 11.6]{CS05}) it follows that 
$$
\frac{\partial_{\mathbf w_1} u_{x_0}}{\partial_{\mathbf w_2} u_{x_0}} \in C^{0,\alpha}(B_{\sigma/4}(0)\cap \{u_{x_0}>0\}) \qquad \forall\,\mathbf w_i=-\frac{\mathbf v_i}{|\mathbf v_i|},\,\mathbf v_i \in B_{\sigma/2}(\hat x).
$$
In particular, choosing 
$$
\mathbf w_2=-\frac{\hat x}{|\hat x|},\qquad
\mathbf w_1=-\frac{\hat x+\frac{\sigma}{2} \mathbf e}{|\hat x+\frac{\sigma}{2} \mathbf e|}\quad \text{with }\mathbf e \in \mathbb S^{n-1}\text{ arbitrary},\
$$
we get
 $$\frac{\nabla u_{x_0}(x)}{|\nabla u_{x_0}(x)|}\in C^{0,\alpha}(B_{\sigma/4}(0)\cap \{u_{x_0}>0\}).
 $$
Since $\frac{\nabla u_{x_0}(x)}{|\nabla u_{x_0}(x)|}$ coincides with the normal to the level sets of $u_{x_0}$, we deduce that the level sets of $u_{x_0}$ are uniformly $C^{1,\alpha}$, thus $\partial\{u_{x_0}>0\}$
is of class $C^{1,\alpha}$ inside $B_{\sigma/4}(0)$. In particular, up to reducing the size of $\sigma$, 
$\{u_{x_0}>0\}$ is arbitrarily close to a half-space $H$ inside $B_{\sigma/4}(0)$.

To transfer the informations back to our solution $u$, one argues as follows: given $\epsilon>0$ small, we can choose a radius $r=r_\epsilon>0$ small enough so that
$\|u_{x_0,r}-u_{x_0}\|_{C^1(B_1)}\leq \epsilon$. This implies that 
\eqref{eq:monotone} almost holds for $u_{x_0,r}$: more precisely, since \eqref{eq:monotone} holds uniformly away from the free boundary, we deduce that
$$
\partial_{\mathbf w} u_{r,x_0}(x)> 0 \qquad \forall\,x \in B_{\sigma/2}(0)\cap H\quad \text{s.t. } {\rm dist}(x,\partial H)\geq  \delta_\epsilon \sigma,
$$
where $\delta_\epsilon\to 0$ as $\epsilon\to 0.$

Exploiting the fact that $\partial_{\mathbf w} u_{r,x_0}$ is harmonic inside its positivity set, and that the set $\{u_{r,x_0}>0\}\cap B_{\sigma/2}$ is very close to $B_{\sigma/2}(0)\cap H$, a maximum principle argument (see \cite[Lemma 11]{C98}) shows that,
if $\epsilon$ is sufficiently small,
$$
\partial_{\mathbf w} u_{r,x_0}(x)> 0 \qquad \forall\,x \in B_{\sigma/4}(0)\cap \{u_{r,x_0}>0\}.
$$
Exactly as before, this implies first that the level sets of $u_{r,x_0}$ are Lipschitz, and then they are $C^{1,\alpha}$ inside $B_{\sigma/6}(0)$ by the boundary Harnack inequality.
Finally, elliptic PDEs techniques yield higher order regularity and analyticity of $\partial \{u_{r,x_0}>0\}\cap B_{\sigma/8}(0)$ \cite{KN77}.
Since $\partial \{u_{r,x_0}>0\}\cap B_{\sigma/8}(0)$ is a dilate and translate of $\partial\{u>0\}\cap B_{r\sigma/8}(x_0))$, this proves the analiticity of the free boundary of $u$ in a neighborhood of $x_0$.

\smallskip

\noindent
$\bullet$ {\it Case 2: The contact set $\{u_{x_0}=0\}$ has measure 0.} In this case, since $\Delta u_{x_0}=1$ outside the contact set we deduce that $\Delta u_{x_0}=1$ a.e. in $\R^n$, and by elliptic regularity $\Delta u_{x_0}\equiv 1$.
Thus
$$
0=\partial_{\mathbf e\mathbf e}1=\partial_{\mathbf e\mathbf e}(\Delta u_{x_0})
=\Delta (\partial_{\mathbf e\mathbf e}u_{x_0}),
$$
which implies that $\partial_{\mathbf e\mathbf e}u_{x_0} \geq 0$ is a nonnegative harmonic function in the whole $\R^n$. Recall now the classical Harnack inequality for nonnegative harmonic functions:
\begin{equation}
\label{eq:harnack}
\Delta w=0 \,\,\text{ and }\,\,w \geq 0 \quad \text{ in }B_R\qquad \Rightarrow\qquad \sup_{B_{R/2}}w \leq C_n\inf_{B_{R/2}}w,
\end{equation} for some dimensional constant $C_n>0.$

Defining $m_{\mathbf e}:=\inf_{\R^n}\partial_{\mathbf e\mathbf e}u_{x_0}$, it follows by \eqref{eq:harnack} applied to $w=\partial_{\mathbf e\mathbf e}u_{x_0} -m_{\mathbf e}$ that
$$
\sup_{B_{R/2}}(\partial_{\mathbf e\mathbf e}u_{x_0} -m_{\mathbf e}) \leq C_n\inf_{B_{R/2}}(\partial_{\mathbf e\mathbf e}u_{x_0} -m_{\mathbf e})\qquad\forall\,R>0.
$$
Letting $R\to \infty$ the right hand side tends to 0, therefore
$$
\sup_{\R^n}(\partial_{\mathbf e\mathbf e}u_{x_0} -m_{\mathbf e})=0,
$$
which proves that $\partial_{\mathbf e\mathbf e}u_{x_0}$ is constant.

Since $\mathbf e\in \mathbb S^{n-1}$ was arbitrary, this proves that all the pure second derivatives of $u_{x_0}$ are constant, hence $u_{x_0}$ is a homogeneous quadratic polynomial (recall that $u_{x_0}(0)=0$ and $u_{x_0}\geq 0$). Thus, $x_0$ is a singular point.

We now observe that $u_{x_0}=\lim_{k\to \infty}u_{x_0,r_k}$ for some sequence of radii $r_k$ converging to 0. Now, if we consider a different sequence of radii $r_{k'}$ converging to 0 and $u_{x_0}'$ is a  limit point, then again the contact set $\{u_{x_0}'=0\}$ must have measure zero. Indeed, if not, then by Case 1 the point $x_0$ would be regular. Thus, the set $\{u_{x_0,r}=0\}$ would be close to a half-ball for all $r>0$ small, in contradiction with the fact that the set  $\{u_{x_0,r_k}=0\}$ should converge to $\{u_{x_0}=0\}$, which is of measure zero.

Finally, the inclusion  $\partial\{u>0\}\cap B_{r}(x_0)\subset \{x\,:\,|\mathbf{e}_r\cdot (x-x_0)|\leq o(r)\}$ follows from the fact that
$$
\frac1r\Bigl(\bigl(\partial\{u>0\}\cap B_{r}(x_0)\bigr) -x_0\Bigr)=\partial\{u_{x_0,r}>0\}\cap B_{1},
$$
and that any limit point of $u_{x_0,r}$ has a contact set which is a convex set of measure zero, hence contained in a hyperplane.

\smallskip

We refer the interested reader to \cite{C98} and to the book \cite[Chapters 4.1, 6.1, 6.2, and 6.4]{PSU12} for more details and specific references.
\end{proof}

Observe that, as a consequence of Theorem \ref{thm:dico}, a free boundary point can only be either regular or singular. Also, if it is regular then the free boundary is smooth in a neighborhood and all points nearby are regular as well.
From this we deduce that the convergence in Definition \ref{def:reg pt} holds without the need of taking a subsequence of radii.

While Theorem \ref{thm:dico}(i) gives a  complete answer to the structure of regular points, Theorem \ref{thm:dico}(ii) is still not conclusive. Indeed, first of all the vector $\mathbf{e}_r$ may depend on $r$. Also, the $o(r)$ appearing in the statement comes from a compactness argument, so it is not quantified.

Hence, from now on we shall only focus on the study of singular points.
To simplify the notation we  denote
$$\Sigma:=\{\text{singular points}\}\subset \partial\{u>0\}.$$
Since the set of regular points is relatively open inside the free boundary (as a consequence of Theorem \ref{thm:dico}(i)), we deduce that $\Sigma$ is a closed set.

\section{Uniqueness of blow-up at singular points}

As observed in the previous section, a priori the vector $\mathbf{e}_r$ appearing in the statement of
 Theorem \ref{thm:dico}(ii)
may depend on $r$. This fact is essentially related to whether the convergence in Definition \ref{def:sing pt} holds only up to subsequence or not: indeed, if one could prove that the convergence to a polynomial $p$ holds without passing to a subsequence, then one could easily deduce that 
$\partial\{u>0\}\cap B_{r}(x_0)\subset \{x\,:\, {\rm dist}(x-x_0,\{p=0\})\leq o(r)\}$.

The complete answer to this questions has again been given by Caffarelli in \cite{C98}, after some previous results in the two dimensional case \cite{CR77,Sak91,Sak93}.

From now on we use the notation 
\[\mathcal P := \bigg\{ p(x) = { \frac 1 2} \langle  Ax,x\rangle\  :\   A\in \R^{n\times n}\mbox{ symmetric nonnegative definite, with ${\rm tr}(A)=1$}
\bigg\}
.\]

\begin{theorem}
\label{thm:uniq blow 2}
Let $u$ solve \eqref{eq:obst},
and let $x_0\in \Sigma$. Then
there exists $p_{*,x_0} \in \mathcal P$ such that
$$
\lim_{r\to 0}\frac{u(x_0+rx)}{r^2}= p_{*,x_0}(x).
$$
In addition, the map
$$
\Sigma\ni x_0\mapsto p_{*,x_0}
$$
is locally uniformly continuous.
\end{theorem}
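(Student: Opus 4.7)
The plan is to establish the theorem via \emph{Monneau's monotonicity formula}. For each $x_0 \in \Sigma$ and each $p \in \mathcal{P}$, define, for $r \in (0, \dist(x_0, \partial\Omega))$,
$$
M_{x_0,p}(r) := \frac{1}{r^{n+3}} \int_{\partial B_r(x_0)} \bigl( u(y) - p(y-x_0) \bigr)^2 \, d\mathcal{H}^{n-1}(y),
$$
and note the scaling identity $M_{x_0,p}(r) = \int_{\partial B_1}(u_{x_0,r} - p)^2 \, d\mathcal{H}^{n-1}$, obtained by the change of variables $y = x_0+rz$. The first (and main) analytic step is to prove that $r \mapsto M_{x_0,p}(r)$ is monotone nondecreasing. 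Setting $\tilde w(y) := u(x_0+y) - p(y)$ and using $\Delta u = \chi_{\{u>0\}}$ together with $\Delta p \equiv 1$, one sees that $\Delta \tilde w = -\chi_{\{u(x_0+\cdot)=0\}}$; moreover, on the contact set $\tilde w = -p \leq 0$, while off the contact set $\Delta \tilde w = 0$, so $\tilde w\,\Delta \tilde w \geq 0$ almost everywhere. A direct differentiation of $M_{x_0,p}(r) = \|u_{x_0,r} - p\|_{L^2(\partial B_1)}^2$ followed by integration by parts on $B_1$ (using the sign of $\tilde w\,\Delta \tilde w$ and the fact that $p$ is $2$-homogeneous) yields $r\,M_{x_0,p}'(r) \geq 0$. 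In particular $M_{x_0,p}(0^+) := \lim_{r\to 0^+} M_{x_0,p}(r) \in [0,\infty)$ exists.

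Uniqueness of the blow-up then follows at once. Suppose $p_1, p_2 \in \mathcal{P}$ are two subsequential blow-up limits of $u_{x_0,r}$ along $r_k \to 0$ and $r'_k \to 0$, respectively; that both limits lie in $\mathcal{P}$ is guaranteed by Case 2 in the proof of Theorem~\ref{thm:dico}. Apply the formula with $p = p_1$: the $C^1_{\rm loc}$-convergence $u_{x_0,r_k} \to p_1$ gives $M_{x_0,p_1}(r_k) \to 0$, and combined with monotonicity this forces $M_{x_0,p_1}(0^+) = 0$. Evaluating instead along $r'_k$ yields $M_{x_0,p_1}(r'_k) \to \|p_2 - p_1\|_{L^2(\partial B_1)}^2 = 0$. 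Since $p_1 - p_2$ is $2$-homogeneous, for every $z \neq 0$ we have $(p_1-p_2)(z) = |z|^2 (p_1-p_2)(z/|z|) = 0$, because $p_1 \equiv p_2$ on $\partial B_1$. Hence $p_1 \equiv p_2$, and the common blow-up is the desired $p_{*, x_0}$.

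For the local uniform continuity of $x_0 \mapsto p_{*, x_0}$ on $\Sigma$, consider $x_{0,k} \to x_{0,\infty}$ in $\Sigma$. Applying Monneau's formula at $x_{0,k}$ with the \emph{fixed} comparison polynomial $p := p_{*, x_{0,\infty}}$ and invoking the uniqueness step at $x_{0,k}$ gives
$$
\bigl\|p_{*, x_{0,k}} - p_{*, x_{0,\infty}}\bigr\|_{L^2(\partial B_1)}^2 \;=\; M_{x_{0,k},\, p}(0^+) \;\leq\; M_{x_{0,k},\, p}(r) \qquad \text{for all small } r > 0.
$$
For each fixed such $r$, the right-hand side depends continuously on its base point (by continuity of $u$), so it converges as $k \to \infty$ to $M_{x_{0,\infty},\, p}(r)$; by the uniqueness step at $x_{0,\infty}$, this in turn tends to $0$ as $r \to 0^+$. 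Hence $p_{*, x_{0,k}} \to p_{*, x_{0,\infty}}$ in $L^2(\partial B_1)$, and, since both are quadratic polynomials, in every stronger norm. Upgrading this to local uniform continuity on compact subsets of $\Sigma$ amounts to making the choice of ``$r$ small enough'' quantitative and uniform in the base point, which follows from the $C^{1,1}$ bound on $u$ together with a standard compactness argument on $\Sigma$.

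The main obstacle is precisely the derivation of the differential inequality $M'_{x_0,p}(r) \geq 0$: the pointwise sign $\tilde w\,\Delta\tilde w \geq 0$ is the easy part, but the rigorous computation requires integrating by parts across the free boundary (where $\Delta u$ is discontinuous) and carefully handling the boundary terms on $\partial B_r$ to convert them into the form $r\,M'(r) = (\text{nonnegative quantity})$. Once this monotonicity is in hand, the rest of the proof is a soft consequence of the scaling identity for $M_{x_0,p}$ together with the existence of the monotone limit at $r = 0^+$.
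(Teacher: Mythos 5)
Your overall architecture (Monneau monotonicity $\Rightarrow$ uniqueness of the blow-up $\Rightarrow$ continuity of $x_0\mapsto p_{*,x_0}$ via compactness of $\Sigma$) is exactly the route the paper follows, and your uniqueness and continuity steps are sound. However, there is a genuine gap in what you yourself identify as the main step: the claim that ``direct differentiation of $M_{x_0,p}$, integration by parts, the sign of $\tilde w\,\Delta\tilde w$, and the $2$-homogeneity of $p$'' yield $r\,M_{x_0,p}'(r)\ge 0$. Carrying out that computation (with $w:=u(x_0+\cdot)-p$) gives
$$
\frac{r}{2}\,\frac{d}{dr}M_{x_0,p}(r)\;=\;\frac{1}{r^{n+3}}\int_{\partial B_r} w\,(x\cdot\nabla w-2w)
\;=\;\underbrace{\frac{1}{r^{n+2}}\int_{B_r} w\,\Delta w}_{\ge 0 \text{ by \eqref{wLapw}}}\;+\;\Bigl(\frac{1}{r^{n+2}}\int_{B_r}|\nabla w|^2-\frac{2}{r^{n+3}}\int_{\partial B_r}w^2\Bigr),
$$
and the second bracket has \emph{no} sign in general (for an arbitrary $W^{1,2}$ function it can be negative; it expresses that the ``frequency'' of $w$ at $x_0$ is at least $2$). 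In the paper this bracket is shown to equal $W(r,u)-W(r,p)$, and its nonnegativity is precisely inequality \eqref{eq:mon1}, which is \emph{not} a consequence of the pointwise sign $w\,\Delta w\ge 0$: it requires Weiss' monotonicity formula (Proposition \ref{prop:Weiss}), namely $W(r,u)\ge W(0^+,u)$, together with the fact that at a singular point $W(0^+,u)=W(1,\bar p)=c_n=W(r,p)$ for every $p\in\mathcal P$ (constancy of the Weiss energy on $\mathcal P$ plus identification of the limit through a blow-up along a subsequence). So the ingredient missing from your proposal is an entire auxiliary monotonicity formula, not a technical matter of handling boundary terms across the free boundary.

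Concretely: to repair the argument you should first prove Proposition \ref{prop:Weiss} (differentiate $W(1,u_r)$, use $(-\Delta u_r+1)\,\partial_r u_r\equiv 0$ and $\partial_r u_r=r^{-1}(\partial_\nu u_r-2u_r)$ on $\partial B_1$ to get $\frac{d}{dr}W(1,u_r)=\frac{2}{r}\int_{\partial B_1}(\partial_\nu u_r-2u_r)^2\ge 0$, and compute $W(r,p)\equiv c_n$ on $\mathcal P$), and only then insert $W(r,u)\ge W(r,p)$ into the display above, exactly as in Lemma \ref{lem:Mon}. With that ingredient added, your uniqueness and continuity arguments go through as written and coincide with the paper's proof.
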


For convenience of the reader we present here the proof of this result given few years later by Monneau \cite{M03}.
To this aim, we shall first need to prove the following monotonicity formula due to Weiss \cite{W99}.

\begin{proposition}
\label{prop:Weiss}
Let $0 \in \partial\{u>0\}$, assume that $B_\rho(0)\subset \Omega$,
and for $r\in(0,\rho)$ define
the function
\[
W(r,u) := \frac{1}{r^{n+2}} \int_{B_r} \Bigl(|\nabla u|^2 +2u\Bigr)  -\frac{2}{r^{n+3}}\int_{\partial B_r} u^2. 
\]
Then
\[
\frac{d}{dr} W(r,u) \ge 0\qquad \forall\,r\in (0,\rho).
\]
In addition, if $0 \in \Sigma$ then
\[
W(0^+, u) =W(r,p)\qquad \forall\,p \in \mathcal P,\,\forall \,r>0.
\]
\end{proposition}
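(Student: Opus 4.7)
My plan is to work with the two-homogeneous rescaling $u_r(x):=u(rx)/r^2$, which is $C^{1,1}$ on $B_{\rho/r}(0)$ and again satisfies $\Delta u_r=\chi_{\{u_r>0\}}$, $u_r\ge 0$. A change of variables rewrites
\[
W(r,u) \;=\; \int_{B_1}\bigl(|\nabla u_r|^2+2u_r\bigr)\,dx\ -\ 2\int_{\partial B_1} u_r^2\,dS,
\]
and a direct computation gives $\partial_r u_r(x)=r^{-1}\bigl(x\cdot\nabla u_r(x)-2u_r(x)\bigr)$. I would differentiate the right-hand side in $r$ and treat the cross term $\int_{B_1}2\nabla u_r\cdot\nabla(\partial_r u_r)\,dx$ via Green's identity, producing a boundary piece on $\partial B_1$ plus the interior contribution $-2\int_{B_1}(\partial_r u_r)\chi_{\{u_r>0\}}\,dx$.

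The decisive point is that $\partial_r u_r=0$ almost everywhere on $\{u_r=0\}$: by the Lipschitz calculus fact recalled in the footnote of Section 5, applied to the $C^{1,1}$ function $u_r$, one has $\nabla u_r=0$ a.e.\ on $\{u_r=0\}$, and together with $u_r=0$ on that set this kills $\partial_r u_r$ there. Consequently $(\partial_r u_r)\chi_{\{u_r>0\}}=\partial_r u_r$ a.e., and the interior piece cancels exactly against the $\int_{B_1}2\partial_r u_r\,dx$ produced by differentiating $W$. What remains is
\[
\frac{d}{dr}W(r,u) \;=\; 2\int_{\partial B_1}(\partial_r u_r)(\partial_\nu u_r-2u_r)\,dS.
\]
On $\partial B_1$, $\partial_\nu u_r-2u_r=x\cdot\nabla u_r-2u_r=r\,\partial_r u_r$, so this equals $2r\int_{\partial B_1}(\partial_r u_r)^2\,dS\ge 0$, giving monotonicity.

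For the second statement, any $p\in\mathcal{P}$ is two-homogeneous with $\Delta p={\rm tr}(A)=1$ and $p\ge 0$, so $p$ satisfies the same obstacle equation and $p_r\equiv p$. Therefore $\partial_r p_r\equiv 0$, and the identity above forces $W(\cdot,p)$ to be constant in $r$. Integration by parts together with Euler's relation $x\cdot\nabla p=2p$ on $\partial B_1$ reduces $W(1,p)$ to $\int_{B_1} p\,dx$, and the symmetry identities $\int_{B_1}x_ix_j\,dx=\delta_{ij}c_n$ give $W(1,p)=\tfrac{c_n}{2}{\rm tr}(A)=\tfrac{c_n}{2}$, a quantity depending only on the normalization ${\rm tr}(A)=1$ and not on the particular $p$. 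Finally, since $0\in\Sigma$, Definition \ref{def:sing pt} supplies a sequence $r_k\to 0^+$ with $u_{0,r_k}\to p_0\in\mathcal{P}$; the uniform $C^{1,1}$ bound from Theorem \ref{thm:C11} upgrades this to $C^1_{\rm loc}$ convergence on $\overline{B_1}$, and passing to the limit termwise in $W(r_k,u)$ yields $W(0^+,u)=W(1,p_0)=W(r,p)$ for all $p\in\mathcal{P}$ and all $r>0$.

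The main obstacle is the identity $(\partial_r u_r)\chi_{\{u_r=0\}}=0$ a.e.; it rests essentially on the $C^{1,1}$ regularity of Theorem \ref{thm:C11} and is precisely what transmutes $\frac{d}{dr}W$ into a perfect square. Everything else is routine algebra and a compactness passage to the limit.
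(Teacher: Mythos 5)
Your proposal is correct and follows essentially the same route as the paper: rescale to $u_r(x)=u(rx)/r^2$, differentiate $W(1,u_r)$, integrate by parts, and use that $(\Delta u_r-1)\,\partial_r u_r\equiv 0$ (your cancellation of $\chi_{\{u_r>0\}}\partial_r u_r$ against $\partial_r u_r$ is exactly this identity) to reduce $\frac{d}{dr}W$ to the nonnegative boundary square $\frac{2}{r}\int_{\partial B_1}(\partial_\nu u_r-2u_r)^2$. The second part also matches the paper's argument (monotonicity gives existence of $W(0^+,u)$, $C^1$ convergence of the blow-up sequence identifies it with $W(1,\bar p)$, and $W(r,p)\equiv c_n$ on $\mathcal P$ by homogeneity and the explicit computation you carry out, which the paper leaves as a ``direct computation'').
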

\begin{proof}
Set $u_r(x):=r^{-2}u(rx)$,
so that
$W(r,u)=W(1,u_r)$.
Then, integrating by parts,
\begin{align*}
\frac{d}{dr} W(1,u_r)&=2\int_{B_1}\Bigl(\nabla u_r\cdot \nabla (\partial_ru_r)+\partial_ru_r\Bigr)-
4\int_{\partial B_1}u_r\,\partial_ru_r\\
&=2\int_{B_1}\bigl(-\Delta u_r+1\bigr)\,\partial_ru_r+
2\int_{\partial B_1}(\partial_\nu u_r-2u_r)\,\partial_ru_r.
\end{align*}
 Noticing that
$\Delta u_r=1$ in the region where $\{u_r>0\}$, and that
$$
\partial_ru_r=r^{-1}\bigl(x\cdot \nabla u_r-2u_r\bigr),
$$
it follows that
either $-\Delta u_r+1$ or $\partial_ru_r$ vanishes, and that 
 $\partial_ru_r=r^{-1}(\partial_\nu u_r-2u_r)$ on $\partial B_1$. Thus $(-\Delta u_r+1)\partial_ru_r\equiv 0$, and 
$$
\frac{d}{dr} W(1,u_r)
=\frac{d}{dr} W(1,u_r)
=\frac2{r}\int_{\partial B_1}(\partial_\nu u_r-2u_r)^2 \geq 0.
$$
This proves the monotonicity of $W$.

Now, if $0$ is a singular point and $\bar p \in \mathcal P$ is the limit of $u_{r_k}$ along some sequence $r_k\to 0$, 
then 
$$
\lim_{r\to 0}W(r,u)=\lim_{k\to \infty}
W(r_k,u)=\lim_{k\to \infty}
W(1,u_{r_k})=
W(1,\bar p),
$$
where the first equality follows from the fact that $W(r,u)$ has a limit as $r\to 0$ because of monotonicity.
Finally, a direct computation shows that there exists a dimensional constant $c_n>0$ such that 
$W(r,p)=W(1,p)=c_n$ for all $p \in \mathcal P$ and $r>0$.
\end{proof}

We shall also need the following observation:
\begin{remark}\label{remsign} 
Let $p\in \mathcal P$. Since  $\Delta u = \Delta p =1$  in $\{u>0\}$, we have
\[
w\Delta w = 
\begin{cases} 
0  							&\mbox{in } \{u>0\}
\\
p\Delta p = p\ge 0 \quad 	&\mbox{in } \{u=0\}.
\end{cases}
\]
Equivalently,
\begin{equation}\label{wLapw}
w\Delta w = p\chi_{\{u=0\}} \ge 0\qquad \forall\, p \in \mathcal P.
\end{equation}
\end{remark}

We can now prove the so-called Monneau's monotonicity formula.
\begin{lemma}\label{lem:Mon}
Let $0 \in \Sigma$, $p \in \mathcal P$,
assume that $B_\rho(0)\subset \Omega$, and for $r \in (0,\rho)$ define
$$
M(r,u,p):=\frac{1}{r^{n+3}}
 \int_{\partial B_r} (u-p)^2.
$$
Then
$$
\frac{d}{dr}M(r,u,p)\geq 0\qquad \forall\,r \in (0,\rho).
$$
\end{lemma}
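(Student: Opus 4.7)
The plan is to reduce everything to scale $1$ via $u_r(x) := r^{-2}u(rx)$, differentiate, integrate by parts, and then recognize the resulting expression as a sum of a Weiss-monotonicity quantity and a pointwise nonnegative term coming from Remark~\ref{remsign}.

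\textbf{Step 1 (reduction to unit scale).} Since every $p \in \mathcal P$ is $2$-homogeneous, $p(rx) = r^2 p(x)$, so setting $q_r := u_r - p$ one checks by a direct change of variables that
\[
M(r,u,p) = \int_{\partial B_1} q_r^2.
\]
This removes the explicit $r$-dependence from the domain of integration and lets me differentiate by simply differentiating the integrand.

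\textbf{Step 2 (differentiate and integrate by parts).} A computation with the chain rule gives $\partial_r u_r(x) = r^{-1}\bigl(x\cdot\nabla u_r(x) - 2u_r(x)\bigr)$ (this is exactly the identity already used in the proof of Weiss's formula). Since $p$ is $2$-homogeneous, $x\cdot\nabla p - 2p \equiv 0$, so $\partial_r q_r = r^{-1}(x\cdot\nabla q_r - 2q_r)$. On $\partial B_1$, $x\cdot\nabla q_r = \partial_\nu q_r$. Hence
\[
\frac{d}{dr} M(r,u,p) = \frac{2}{r}\int_{\partial B_1} q_r\,\partial_\nu q_r - \frac{4}{r}\int_{\partial B_1} q_r^2,
\]
and the divergence theorem converts the first integral into $\int_{B_1}|\nabla q_r|^2 + \int_{B_1} q_r\Delta q_r$, yielding
\[
\frac{d}{dr} M(r,u,p) = \frac{2}{r}\Bigl[\int_{B_1}|\nabla q_r|^2 - 2\int_{\partial B_1} q_r^2\Bigr] + \frac{2}{r}\int_{B_1} q_r\,\Delta q_r.
\]

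\textbf{Step 3 (recognize Weiss and apply the sign identity).} The bracketed term is precisely $W(r,u) - W(r,p)$. Indeed, expanding $|\nabla u_r|^2 = |\nabla q_r + \nabla p|^2$ and $u_r^2 = (q_r + p)^2$, integrating the cross term $\int_{B_1}\nabla q_r\cdot \nabla p$ by parts (using $\Delta p = 1$ and $\partial_\nu p = 2p$ on $\partial B_1$), the $\int_{B_1} q_r$ and $\int_{\partial B_1} p q_r$ contributions cancel and one obtains
\[
W(r,u) - W(r,p) = W(1,u_r) - W(1,p) = \int_{B_1}|\nabla q_r|^2 - 2\int_{\partial B_1} q_r^2.
\]
By Proposition~\ref{prop:Weiss}, $W(\cdot,u)$ is monotone nondecreasing and $W(0^+,u) = W(1,p) = c_n$ since $0\in\Sigma$; therefore $W(r,u) - W(r,p) \geq 0$ for all $r\in(0,\rho)$. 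Meanwhile, by Remark~\ref{remsign} applied to $q_r = u_r - p$,
\[
q_r\,\Delta q_r = p\,\chi_{\{u_r = 0\}} \geq 0,
\]
so $\int_{B_1} q_r\,\Delta q_r \geq 0$. Combining the two nonnegative contributions gives $\tfrac{d}{dr} M(r,u,p) \geq 0$, which is the claim.

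The only slightly delicate point is the algebraic identification in Step 3; the monotonicity of $W$ does most of the work, but using it here requires the singular-point hypothesis (to pin down $W(0^+,u) = c_n$), which is exactly the reason the lemma is stated only at singular points.
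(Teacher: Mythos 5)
Your proof is correct and takes essentially the same route as the paper's: both hinge on Weiss monotonicity together with $W(0^+,u)=W(\cdot,p)$ at singular points, the sign identity $w\Delta w=p\chi_{\{u=0\}}\geq 0$ from Remark \ref{remsign}, the $2$-homogeneity of $p$, and a single integration by parts. The only difference is the order of operations (you differentiate $M$ first and then split the derivative as $\tfrac{2}{r}\bigl[W(r,u)-W(r,p)\bigr]+\tfrac{2}{r}\int_{B_1}q_r\,\Delta q_r$, while the paper establishes the corresponding integral inequality first and then computes $\tfrac{d}{dr}M$), which is purely cosmetic.
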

\begin{proof}
For simplicity of notation we set $w:=u-p$.

Since  $W(r,u)\geq W(0^+,u) =  W(r,p)$ for all $r\in (0,\rho)$ (see Proposition \ref{prop:Weiss}) and $\Delta p\equiv 1$, we have
\[
\begin{split}
0 &\le W(r,u) - W(r,p)
\\
&=\frac{1}{r^{n+2}}\int_{B_r} \Bigl(|\nabla u|^2 +2u -|\nabla p|^2-2p\Bigr) -\frac{2}{r^{n+3}}\int_{\partial B_r} \Bigl(u^2-p^2\Bigr)
\\
&=\frac{1}{r^{n+2}}\int_{B_r} \Bigl(|\nabla w|^2 + 2\nabla w\cdot \nabla p +2w\Bigr) -\frac{2}{r^{n+3}}\int_{\partial B_r} \Bigl(w^2 + 2w p\Bigr)
\\
&= \frac{1}{r^{n+2}}\int_{B_r} \Bigl(|\nabla w|^2 +2{\rm div}(w\nabla p)\Bigr) -\frac{2}{r^{n+3}}\int_{\partial B_r} \Bigl(w^2 + 2w p\Bigr)
\\
&= \frac{1}{r^{n+2}}\int_{B_r} |\nabla w|^2  -\frac{2}{r^{n+3}}\int_{\partial B_r} w^2 + \frac{2}{r^{n+3}}\int_{\partial B_r}   w(x\cdot\nabla p -2p)
\\
&= \frac{1}{r^{n+2}}\int_{B_r} |\nabla w|^2 -\frac{2}{r^{n+3}}\int_{\partial B_r} w^2 ,
\end{split}
\]
where we used that $x=r\nu$ on $\partial B_r$, and that $p$ is $2$-homogeneous (hence $x\cdot \nabla p=2p$).  This proves that
\begin{equation}
\label{eq:mon1}
\frac{1}{r^{n+2}}\int_{B_r} |\nabla w|^2\geq \frac{2}{r^{n+3}}\int_{\partial B_r} w^2\qquad \forall\,r\in (0,\rho).
\end{equation}
Now, since
$$
\frac{1}{r^{n+2}}\int_{B_r} |\nabla w|^2=
 \frac{1}{r^{n+2}}\int_{B_r} - w\Delta w + \frac{1}{r^{n+3}}\int_{\partial B_r} w\,x\cdot \nabla w,
$$
thanks to \eqref{eq:mon1}
and \eqref{wLapw} we obtain
$$
 \frac{1}{r^{n+3}} \int_{\partial B_r} w( x\cdot\nabla w-2w) \ge \frac{1}{r^{n+2}}\int_{B_r} w\Delta w\ge 0 
 \qquad \forall\,r\in (0,\rho).
$$
Hence, setting $w_r(x):=r^{-2}w(rx)$
and noticing that $\partial_rw_r(x)=r^{-1}(x\cdot \nabla w_r-2w_r)$, we obtain
\begin{multline*}
\frac{d}{dr}M(r,u,p)=\frac{d}{dr}\biggl(\frac{1}{r^{n+3}}
 \int_{\partial B_r} w^2\biggr)=\frac{d}{dr}\biggl(
 \int_{\partial B_1} w_r^2\biggr)\\
 =\frac{2}{r}
 \int_{\partial B_1}w_r( x\cdot\nabla w_r-2w_r) =
 \frac{2}{r^{n+4}} \int_{\partial B_r} w( x\cdot\nabla w-2w)\geq 0 ,
\end{multline*}
as desired.
\end{proof}

We can now prove the uniqueness and the continuity of blow-ups at singular points:
\begin{proof}[Proof of Theorem \ref{thm:uniq blow 2}]
We first prove the existence of the limit.

Assume with no loss of generality that $x_0=0$,
set $u_r(x):=r^{-2}u(rx)$, and let 
$p_1$ and $p_2$ be two different limits obtained along two sequences $r_{k,1}$ and $r_{k,2}$ both converging to zero. Up to taking a subsequence of $r_{k,2}$ and relabeling the indices, we can assume that $r_{k,2}\leq r_{k,1}$ for all $k$.
Thus, thanks to Lemma \ref{lem:Mon}, we have
$$\int_{B_1}(u_{r_{k,1}}-p_1)^2=M(r_{k,1},u,p_1)\geq M(r_{k,2},u,p_1)=\int_{B_1}(u_{r_{k,2}}-p_1)^2\qquad \forall\,k,
$$
and letting $k\to \infty$ we obtain
$$
0=\lim_{k\to \infty} \int_{B_1}(u_{r_{k,1}}-p_1)^2\geq \lim_{k\to \infty}\int_{B_1}(u_{r_{k,2}}-p_1)^2 =\int_{B_1}(p_2-p_1)^2.
$$
This proves that there is a unique possible limit for $u_r$ as $\to 0$, which implies that the limit exists.
From now on, given a singular point $x_0$, we shall denote this limit by $p_{*,x_0}$.

We now prove the continuity of the map $x_0 \mapsto p_{*,x_0}$ at $0 \in \Sigma$.
Fix $\ep>0$, and consider a sequence $x_k\in \Sigma$ with $x_k\to 0$.
Since $u_r \to p_{*,0}$, there exists a small radius $r_\ep>0$ such that
\begin{equation}
\label{eq:eps}
\int_{\partial B_1}\biggl|\frac{u(r_\ep x)}{r_\ep^2}-p_{*,0}(x)\biggr|^2 \leq \ep.
\end{equation}
Also, applying Lemma \ref{lem:Mon} at $x_k$ with $p=p_{*,0}$, we deduce that
\begin{align*}
\int_{\partial B_1}|p_{x_k,*} - p_{*,0}|^2 &=\lim_{r\to 0}
 \int_{\partial B_1}\biggl|\frac{u(x_k+r x)}{r^2} - p_{*,0}(x)\bigg|^2\\
 & \leq 
  \int_{\partial B_1}\biggl|\frac{u(x_k+r_\ep x)}{r_\ep^2} - p_{*,0}(x)\bigg|^2.
\end{align*}
Hence, letting $k \to \infty$ and recalling \eqref{eq:eps} we obtain
$$
\limsup_{k\to \infty}\int_{\partial B_1}
|p_{x_k,*} - p_{*,0}|^2\leq \lim_{k\to \infty} \int_{\partial B_1}\biggl|\frac{u(x_k+r_\ep x)}{r_\ep^2} - p_{*,0}(x)\bigg|^2 \le	 \ep.
$$
Since $\ep>0$ is arbitrary, this proves the continuity at $0$.

Because $\Sigma$ is locally compact (recall that $\Sigma$ is closed), this actually implies that the map
$$
\Sigma\ni x_0\mapsto p_{*,x_0}
$$
is locally uniformly continuous.
\end{proof}

\section{Stratification and $C^1$ regularity of the singular set}
With Theorem \ref{thm:uniq blow 2} at hand, we can now investigate the regularity of $\Sigma$.
Note that singular points may look very different depending on the dimension of the set $\{p_{*,x_0}=0\}$, see Figures 
\ref{Fig-blow2bis} and
\ref{Fig-blow2bis3}.
This suggests to stratify the set of singular points according to this dimension.

More precisely,
given
$x_0\in \Sigma$ we set
$$
k_{x_0}:={\rm dim}({\rm ker} \,D^2p_{*,x_0})
={\rm dim}(\{p_{*,x_0}=0\}).
$$
Then, given $m\in \{0,\ldots,n-1\}$ we define
$$
\Sigma_m:=\{x_0\in \Sigma\,:\,k_{x_0}=m\}.
$$
Note that, with this definition, the point in Figure \ref{Fig-blow2bis} belongs to $\Sigma_2$, while the point in Figure \ref{Fig-blow2bis3} belongs to $\Sigma_1$.

More in general, $\Sigma_0$ consists of isolated points, while the other strata $\Sigma_m$ should correspond to the $m$-dimensional part of $\Sigma$, see Figure \ref{Fig-contact}.

\begin{figure}[ht]
\includegraphics[width=0.6\textwidth]{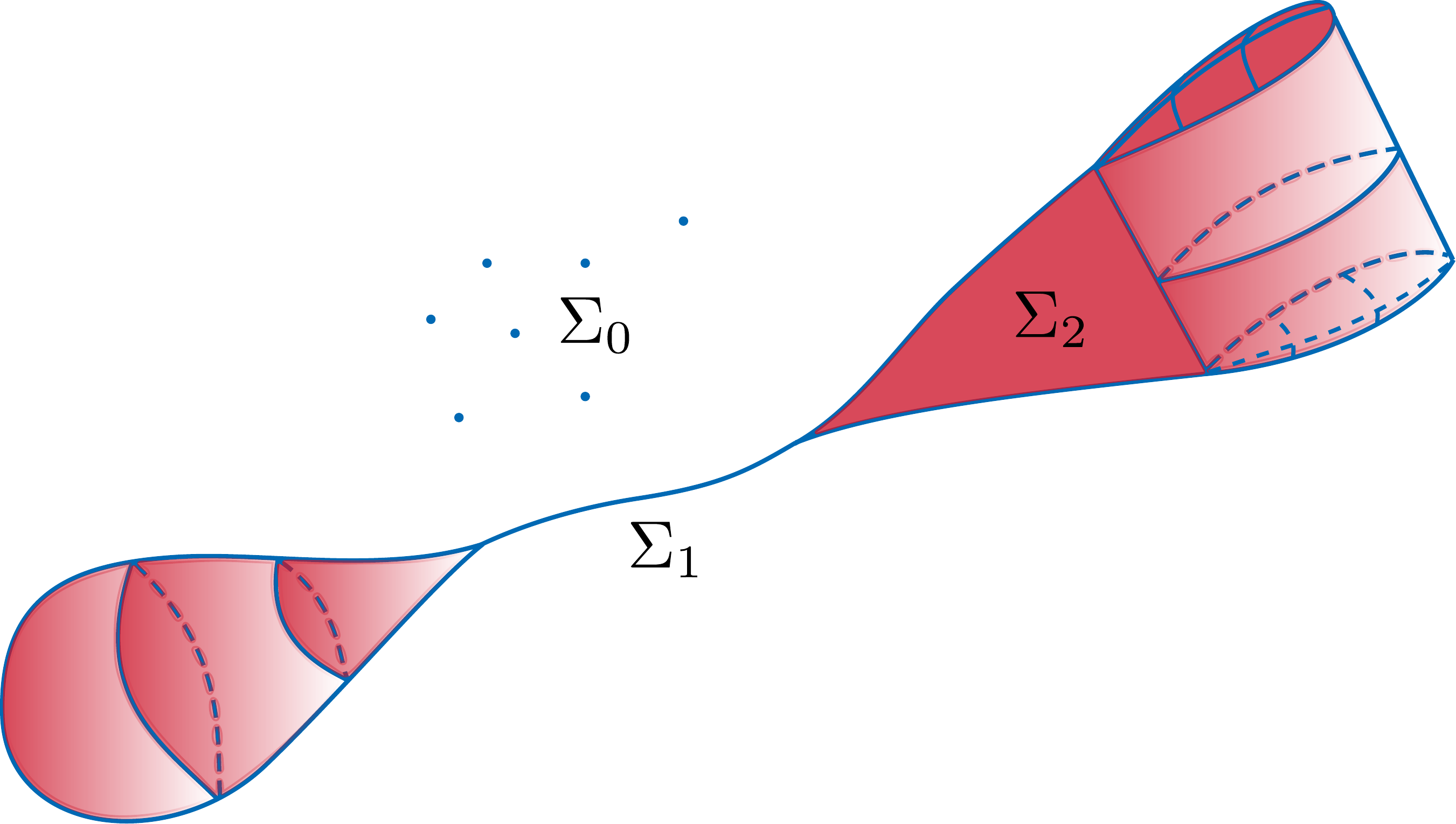}
\caption{A possible example of contact set in 3 dimensions.}
\label{Fig-contact}
\end{figure}

This intuition is confirmed by the following result due to Caffarelli \cite{C98}:
\begin{theorem}
For any  $m\in \{0,\ldots,n-1\}$,
$\Sigma_m$ is locally contained in 
a $m$-dimensional manifold of class $C^1$.
\end{theorem}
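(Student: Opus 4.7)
The plan is to extract from Theorem \ref{thm:uniq blow 2} a second-order Taylor expansion for $u$ at every singular point, assign to each $x_0 \in \Sigma_m$ the $m$-dimensional candidate tangent subspace $L_{x_0} := \ker D^2 p_{*,x_0}$, and then apply the geometric form of Whitney's $C^1$ extension theorem to cover $\Sigma_m$ locally by an $m$-dimensional $C^1$ submanifold.

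Because $u \in C^{1,1}_{\rm loc}(\Omega)$, the rescalings $u_{x_0,r}$ are uniformly $C^{1,1}$ on compact sets, so Theorem \ref{thm:uniq blow 2} improves to $u_{x_0,r} \to p_{*,x_0}$ in $C^1_{\rm loc}(\R^n)$. Rewriting in the original variables gives
\[
u(y) = p_{*,x_0}(y-x_0) + o(|y-x_0|^2),\qquad \nabla u(y) = \nabla p_{*,x_0}(y-x_0) + o(|y-x_0|)
\]
as $y \to x_0$. The crucial refinement is that these errors should hold \emph{uniformly} as $x_0$ varies in a compact subset of $\Sigma$. This is where Monneau's formula (Lemma \ref{lem:Mon}) enters: combined with the locally uniform continuity of $y \mapsto p_{*,y}$ from Theorem \ref{thm:uniq blow 2}, a compactness-contradiction argument upgrades the pointwise closeness of $u_{x_0,r}$ to $p_{*,x_0}$ into uniform quadratic bounds.

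Evaluating the first expansion at $y \in \Sigma$ (so that $u(y) = 0$) yields $p_{*,x_0}(y - x_0) = o(|y - x_0|^2)$. Since $p_{*,x_0}$ is a nonnegative quadratic form with kernel exactly $L_{x_0}$, it satisfies the coercivity bound
\[
p_{*,x_0}(z) \geq \lambda_{x_0}\, {\rm dist}(z, L_{x_0})^2,
\]
where $\lambda_{x_0} > 0$ is the smallest positive eigenvalue of $\tfrac{1}{2} D^2 p_{*,x_0}$. On $\Sigma_m$ the rank of $D^2 p_{*,x_0}$ is the constant $n - m$, and $x_0 \mapsto p_{*,x_0}$ is continuous, so $\lambda_{x_0}$ stays bounded away from $0$ on compact subsets of $\Sigma_m$; consequently
\[
{\rm dist}(y - x_0, L_{x_0}) = o(|y - x_0|)\qquad \text{uniformly for } y \in \Sigma_m,\ y \to x_0.
\]
The same rank-constancy and continuity of $p_{*,\cdot}$ make $\Sigma_m \ni x_0 \mapsto L_{x_0}$ continuous into the Grassmannian of $m$-planes in $\R^n$.

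A continuously varying $m$-dimensional candidate tangent subspace together with a Whitney-type $o(|y-x_0|)$ estimate are precisely the hypotheses of the classical $C^1$ Whitney extension theorem in its geometric form; applying it to the locally closed set $\Sigma_m$ produces a $C^1$ map whose zero set contains $\Sigma_m$ and whose differential at each point of $\Sigma_m$ is surjective onto $L_{x_0}^\perp$, and the implicit function theorem then exhibits $\Sigma_m$ locally as a subset of an $m$-dimensional $C^1$ graph. The principal obstacle is not this final Whitney / implicit-function step, but securing the uniformity of the Taylor remainder across $\Sigma_m$ --- the monotonicity formulas of Weiss and Monneau, the continuity of blow-ups, and the rank constancy on $\Sigma_m$ are marshalled exactly for this purpose.
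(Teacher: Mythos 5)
Your argument is correct and follows essentially the same route as the paper's (sketched) proof: uniqueness and continuity of the blow-up polynomials $p_{*,x_0}$ from Theorem \ref{thm:uniq blow 2} give a second-order expansion of $u$ at singular points, which is fed into Whitney's extension theorem and the implicit function theorem to trap $\Sigma_m$ inside an $m$-dimensional $C^1$ manifold. The only cosmetic difference is that you phrase Whitney's theorem geometrically via the plane field $L_{x_0}=\ker D^2p_{*,x_0}$ and the coercivity of $p_{*,x_0}$ transverse to it, while the paper applies it to the data $F(x_0)=\nabla u(x_0)=0$, $\nabla F(x_0)=D^2p_{*,x_0}$; your explicit attention to the uniformity of the $o(r^2)$ remainder is exactly the point the paper highlights in the remark following its proof.
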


\begin{proof}[Idea of the proof]
Recalling that $u=\nabla u\equiv 0$ on the contact set $\{u=0\}$ (see Figure \ref{Pic-ell-ob}), 
we have in particular that $u|_{\Sigma_m}=\nabla u|_{\Sigma_m}\equiv 0$.
Also, by Theorem \ref{thm:uniq blow 2},
$$
u(x_0+y)=p_{*,x_0}(y)+o(|y|^2).
$$
Hence, at least formally, this means that $p_{*,x_0}$ corresponds to the second order term in the Taylor expansion of $u$,
namely ``$p_{*,x_0}(y)=\frac12 D^2u(x_0)[y,y]$''.

Since the map
$\Sigma\ni x_0\mapsto p_{*,x_0}(y)$ is continuous,
%
one can apply Whitney's extension theorem to find a map $F:\R^n\to \R^n$ of class $C^1$ such that
$$
F(x_0)=\nabla u(x_0)=0 
\qquad \text{and}\qquad \nabla F(x_0)=D^2p_{*,x_0}\qquad \forall\,x_0\in \Sigma_m.
$$
Noticing that ${\rm dim}({\rm ker }\,\nabla F(x_0))={\rm dim}({\rm ker }\,D^2p_{*,x_0})=m$ on $\Sigma_m$, it follows by the Implicit Function Theorem that
$$
{\Sigma_m}=\{F=0\}\cap {\Sigma_m}
$$
is locally contained in 
a $C^1$ $m$-dimensional manifold,
 as desired.
\end{proof}

\begin{remark}
The proof above shows that the estimate
\begin{equation}
\label{eq:C2 u}
\|u(x_0+\cdot)-p_{*,x_0}\|_{L^\infty(B_r)}= o(r^2),
\end{equation}
with a bound $o(r^2)$ independent of $x_0$, implies that 
$\Sigma_m$ is locally contained in a $C^1$ $m$-dimensional manifold. 

More in general, if one could prove that
\begin{equation}
\label{eq:C2a u}
\|u(x_0+\cdot)-p_{*,x_0}\|_{L^\infty(B_r)}\leq C\,r^{2+\alpha}
\end{equation}
for some constant $C$ independent of $x_0$, then by applying Whitney's extension theorem in H\"older spaces one would conclude that $\Sigma_m$ is contained in a $m$-dimensional manifold of class $C^{1,\alpha}$.
\end{remark}

\begin{remark}
The fact that $\Sigma_m$ is only contained in a manifold (and does not necessarily coincide with it) is optimal: one can build examples in $n=2$ where $\Sigma_1$ coincides with a Cantor set contained in a line \cite{Sch76}.
\end{remark}

\section{Recent developments}
\label{sect:recent}

In 1999, using Proposition \ref{prop:Weiss} combined with 
a ``epiperimetric  approach'', Weiss proved the following result \cite{W99}:
\begin{theorem}
\label{thm:Weiss}
Let $n=2$ and $x_0\in \Sigma$. Then there exists a constant $\alpha>0$ such that
$$
\|u(x_0+\cdot)-p_{*,x_0}\|_{L^\infty(B_r)}
\leq
C \,
r^{2+\alpha}
$$
where $C>0$ is locally independent of $x_0$.
In particular $
\Sigma_1\subset C^{1,\alpha}$ curve.
\end{theorem}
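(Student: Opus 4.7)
The plan is to follow Weiss's epiperimetric strategy. The key dimension-specific input is an \emph{epiperimetric inequality}: there exist constants $\kappa\in(0,1)$ and $\delta>0$ such that, for every $2$-homogeneous $c:B_1\to\R$ with $c\ge 0$ and $\|c-p\|_{W^{1,2}(B_1)}\le\delta$ for some $p\in\mathcal P$, there is a competitor $\zeta:B_1\to\R$ with $\zeta\ge 0$, $\zeta=c$ on $\partial B_1$, and
\[
W(1,\zeta)-c_n\;\le\;(1-\kappa)\bigl(W(1,c)-c_n\bigr).
\]
In $n=2$ this can be proved by Fourier analysis on $S^1$: writing $f=\rho^2 g(\theta)$, one sees that $W(1,f)-c_n$ becomes an explicit quadratic form in the Fourier coefficients of $g$, carrying zero weight on the mode-$2$ coefficients (which parametrize $\mathcal P|_{S^1}$), a negative weight on modes $0$ and $1$ (compensated by $f\ge 0$), and strictly positive weight on all modes $\ge 3$; the competitor $\zeta$ is then built by damping the ``bad'' high-frequency component of $c$ with a suitable radial factor. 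Proving this inequality rigorously, while keeping $\zeta\ge 0$, is the main technical obstacle of the proof.

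With the inequality in hand, fix $x_0\in\Sigma$, set $u_r(x):=r^{-2}u(x_0+rx)$, and let $z_r(x):=|x|^2 u_r(x/|x|)$ be the $2$-homogeneous extension of $u_r|_{\partial B_1}$. By Theorem \ref{thm:uniq blow 2}, $z_r\to p_{*,x_0}$ in $W^{1,2}(B_1)$ as $r\to 0$, so for $r$ small the inequality applies with $p=p_{*,x_0}$ and yields a competitor $\zeta_r$. Since $u_r$ minimizes $\int_{B_1}(|\nabla v|^2+2v)$ among nonnegative functions with its boundary values, and $\zeta_r$ is admissible, $W(1,u_r)\le W(1,\zeta_r)$, which combined with the epiperimetric inequality gives
\[
e(r):=W(r,u)-c_n\;\le\;(1-\kappa)\bigl(e(r)+G(r)\bigr),\qquad G(r):=W(1,z_r)-W(1,u_r)\ge 0.
\]
A direct computation expresses $G(r)$ in terms of $\int_{\partial B_1}(\partial_\nu u_r-2u_r)^2$, which coincides up to a factor $r/2$ with $r\,e'(r)$ by the proof of Proposition \ref{prop:Weiss}. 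Substituting yields a differential inequality of the form $\kappa\,e(r)\le C(n)(1-\kappa)\,r\,e'(r)$, which integrates to $e(r)\le C\,r^{2\alpha}$ for some $\alpha=\alpha(\kappa)>0$, with $C$ locally uniform in $x_0$.

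To pass from the decay of $e(r)$ to the pointwise bound in the theorem, I would combine Monneau's formula (Lemma \ref{lem:Mon}) with the $C^{1,1}$ control on $u$. An iteration of the epiperimetric inequality at dyadic scales, fed into Monneau's monotonicity, propagates the decay to $M(r,u,p_{*,x_0})=\int_{\partial B_1}|u_r-p_{*,x_0}|^2\le C\,r^{2\alpha}$ (the nonnegative ``error'' term $\int_{\{u=0\}}p_{*,x_0}$ entering the computation of $M'$ shrinks at a controlled rate because $u_r\to p_{*,x_0}$ uniformly, so that $p_{*,x_0}$ is small on the contact set). Integrating in $r$ gives an $L^2(B_r)$ bound on $u-p_{*,x_0}$ of order $r^{n+2+2\alpha}$, and interpolating this bound with the uniform $C^{1,1}$ estimate of $u-p_{*,x_0}$ afforded by Theorem \ref{thm:C11} produces
\[
\|u(x_0+\cdot)-p_{*,x_0}\|_{L^\infty(B_r)}\le C\,r^{2+\alpha'}
\]
for some (possibly smaller) exponent $\alpha'>0$, locally uniformly in $x_0$.

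Finally, the inclusion of $\Sigma_1$ in a $C^{1,\alpha'}$ curve is an immediate consequence of this pointwise rate via the H\"older version of Whitney's extension theorem, exactly as outlined in the second remark at the end of the previous section: the map $x_0\mapsto D^2 p_{*,x_0}$, together with $u|_{\Sigma_1}\equiv 0$ and $\nabla u|_{\Sigma_1}\equiv 0$, supplies a $C^{1,\alpha'}$ jet, and the Implicit Function Theorem promotes $\Sigma_1$ to a $1$-dimensional manifold of class $C^{1,\alpha'}$.
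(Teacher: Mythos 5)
First, a point of comparison: the paper itself does not prove Theorem \ref{thm:Weiss}; it quotes it from \cite{W99}, indicating only that the proof combines Proposition \ref{prop:Weiss} with an ``epiperimetric approach''. Your outline correctly identifies that strategy, and the skeleton is sound: minimality of $u_r$ against a competitor with the same trace, the differential inequality $\kappa\, e(r)\le C(1-\kappa)\,r\,e'(r)$ for $e(r):=W(r,u)-c_n$, geometric decay of $e$, and finally Whitney extension in H\"older norms plus the implicit function theorem (exactly as in the remark around \eqref{eq:C2a u}). But as a proof it has a gap precisely where the theorem lives: the epiperimetric inequality near the singular profiles $\mathcal P$, which you defer as ``the main technical obstacle'', \emph{is} the dimension-specific content of the result. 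Your Fourier heuristic identifies the right structure of $W(1,\cdot)-c_n$ on $2$-homogeneous functions (null directions along $\mathcal P$, negative weight on modes $0,1$, positive weight on modes $\ge 3$), but the whole difficulty is to show that the constraints ($\zeta\ge 0$, and the fact that $\mathcal P$ is a manifold with boundary, i.e.\ profiles with a vanishing eigenvalue) compensate the negative and null directions while keeping the competitor admissible; this is the delicate step that confines Weiss' argument to $n=2$ and forced \cite{CSV17} to settle for a logarithmic version in higher dimensions. Without it, nothing specific to $n=2$ has been established.

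Second, your passage from $e(r)\le C r^{2\alpha}$ to the pointwise rate via Monneau is not justified as written. The proof of Lemma \ref{lem:Mon} gives, at a singular point, $\frac{d}{dr}M(r,u,p_{*,x_0})=\frac{2}{r}\bigl[e(r)+r^{-(n+2)}\int_{B_r}p_{*,x_0}\chi_{\{u=0\}}\bigr]$, and your control of the second term rests on ``$u_r\to p_{*,x_0}$ uniformly'', which is a rate-free $o(1)$; an $o(1)/r$ term is not integrable, and the best self-contained bound, $r^{-(n+2)}\int_{B_r}p_{*,x_0}\chi_{\{u=0\}}\le C\,M(r,u,p_{*,x_0})^{1/2}$ (Cauchy--Schwarz, using $p_{*,x_0}=p_{*,x_0}-u$ on $\{u=0\}$ and the monotonicity of $M$), only yields $M'\le \frac{C}{r}\bigl(r^{2\alpha}+M^{1/2}\bigr)$, an inequality compatible with merely logarithmic decay of $M$. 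So the claimed $M\le Cr^{2\alpha}$ does not follow, and any attempt to quantify the error term through $\|u_r-p_{*,x_0}\|_{L^\infty}$ is circular. The standard repair bypasses Monneau entirely: since $\partial_r u_r=\frac1r(\partial_\nu u_r-2u_r)$ on $\partial B_1$ and, by the proof of Proposition \ref{prop:Weiss}, $e'(r)=\frac2r\int_{\partial B_1}(\partial_\nu u_r-2u_r)^2$, Cauchy--Schwarz on dyadic annuli gives $\|u_{2^{-j-1}}-u_{2^{-j}}\|_{L^1(\partial B_1)}\le C\,e(2^{-j})^{1/2}\le C2^{-j\alpha}$, hence $\|u_r-p_{*,x_0}\|_{L^1(\partial B_1)}\le Cr^{\alpha}$ after summing; interpolation with the uniform $C^{1,1}$ bound then produces the $L^\infty(B_r)$ estimate and the H\"older continuity of $x_0\mapsto p_{*,x_0}$ needed for the Whitney step. (A minor related point: your identification of $G(r)$ with a multiple of $re'(r)$ is correct for solutions --- indeed $G(r)=\frac{r}{2(n+2)}e'(r)$, by combining the paper's formula with the general identity $W'(r)=\frac1r\int_{\partial B_1}(\partial_\nu u_r-2u_r)^2+\frac{n+2}{r}\bigl(W(1,z_r)-W(1,u_r)\bigr)$ --- but it is asserted without derivation.)
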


Weiss' proof was restricted to two dimensions because of some delicate technical arguments in some steps of the proof. Still, one could have hoped to push his argument to higher dimensions. This was achieved last year by Colombo, Spolaor, and Velichkov \cite{CSV17}, where the authors introduced a quantitative argument to avoid a compactness step in Weiss' proof. However, the price to pay for working in higher dimensions is that they can only get a logarithmic improvement in the convergence of $u$ to $p_{*,x_0}$:
\begin{theorem}
\label{thm:CSV}
Let $n\geq 3$ and $x_0\in \Sigma$. Then
exists a dimensional constant $\epsilon>0$ such that
$$
\|u(x_0+\cdot)-p_{*,x_0}\|_{L^\infty(B_r)}
\leq
C \,r^2|\log(r)|^{-\epsilon}
$$
where $C>0$ is locally independent of $x_0$.
In particular $
\Sigma_m\subset C^{1,\log^\epsilon}$ $m$-dim manifold.
\end{theorem}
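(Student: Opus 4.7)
My plan is to prove a logarithmic epiperimetric inequality for the Weiss functional at singular points in arbitrary dimension, iterate it along dyadic scales to obtain $\phi(r) := W(r, u) - c_n \le C |\log r|^{-1/\gamma}$ for some dimensional $\gamma > 0$, and then convert this energy decay into the desired $L^\infty$ bound using Monneau's monotonicity and the $C^{1,1}$ regularity of $u$. After translating, I assume $x_0 = 0$; writing $u_r(x) := r^{-2}u(rx)$ and $c_n := W(1,p)$ (the common value of $W$ on $\mathcal P$, see Proposition \ref{prop:Weiss}), the combination of Proposition \ref{prop:Weiss} and Theorem \ref{thm:uniq blow 2} gives that $\phi(r)\ge 0$ is monotone and vanishes as $r\to 0$, and the whole question is to quantify the rate.

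The core step is the following quantitative statement: there exist dimensional constants $\delta, \gamma, \kappa > 0$ such that for every trace $v \in H^1(\partial B_1)$ with $L^2(\partial B_1)$-distance from $\mathcal P$ at most $\delta$ there exists a nonnegative extension $z$ of $v$ to $B_1$ satisfying
\[
W(1, z) - c_n \;\le\; \bigl(W(1, v^{\mathrm{hom}}) - c_n\bigr) \;-\; \kappa\, \bigl(W(1, v^{\mathrm{hom}}) - c_n\bigr)^{1+\gamma},
\]
where $v^{\mathrm{hom}}$ denotes the $2$-homogeneous extension of $v$. To construct $z$, I would expand $v$ in spherical harmonics around a closest polynomial $p \in \mathcal P$: modes of degree $\ne 2$, and degree-$2$ modes that are $L^2(\partial B_1)$-orthogonal to the tangent space $T_p\mathcal P$, are strictly stable and produce a first-order improvement upon replacing $v^{\mathrm{hom}}$ by a well-chosen radial profile; the difficulty lies in the neutral degree-$2$ modes along $T_p\mathcal P$, for which the first variation of $W$ vanishes. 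The key idea is to correct $p$ by sliding along $\mathcal P$ by an amount commensurate with these neutral modes and to exploit the obstacle constraint $z\ge 0$ to detect a strictly positive higher-order term, producing the $\kappa(\cdots)^{1+\gamma}$ gain.

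Once this inequality is in place, I would iterate it. Since $u$ minimizes $\int(|\nabla w|^2+2w)$ among nonnegative $w$ with prescribed trace, so does $u_r$ on $B_1$; in particular $W(1, u_r) \le W(1, z)$ for the competitor $z$ given by the epiperimetric inequality applied to $v = u_r|_{\partial B_1}$. An integration-by-parts computation modeled on the proof of Weiss' monotonicity formula shows that $W(1, v_r^{\mathrm{hom}})$ lies (up to constants) between $W(1, u_r) = \phi(r) + c_n$ and $\phi(2r) + c_n$, and combining with the epiperimetric inequality yields the dyadic recursion
\[
\phi(r) \;\le\; \phi(2r) - \kappa' \phi(r)^{1+\gamma}.
\]
Setting $b_k := \phi(2^{-k} r_0)$ turns this into $b_{k-1}-b_k \ge \kappa' b_k^{1+\gamma}$, and discrete comparison with the ODE $(x^{-\gamma})' = -\gamma x^{-1-\gamma}$ gives $b_k \le C k^{-1/\gamma}$, that is, $\phi(r) \le C|\log r|^{-1/\gamma}$.

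The final step is to promote the energy decay to an $L^\infty$ bound. Monneau's monotonicity (Lemma \ref{lem:Mon}) applied with $p = p_{*,0}$ controls $\|u_r - p_{*,0}\|_{L^2(\partial B_1)}^2$ by an integral of $\phi$ (the proof of Lemma \ref{lem:Mon} gives $\tfrac{d}{dr}M(r,u,p_{*,0}) \lesssim W(r,u) - W(r,p_{*,0}) = \phi(r)$, whose $dr/r$ integral is logarithmically summable); averaging over dyadic annuli and interpolating this $L^2$ bound against the uniform Lipschitz control on $u_r - p_{*,0}$ provided by Theorem \ref{thm:C11} yields $\|u_r - p_{*,0}\|_{L^\infty(B_1)} \le C|\log r|^{-\epsilon}$ for some $\epsilon = \epsilon(n)>0$. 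Undoing the rescaling gives the claimed estimate, with $C$ locally uniform in $x_0$ thanks to the local uniformity of the $C^{1,1}$ bound and of the map $x_0 \mapsto p_{*,x_0}$ from Theorem \ref{thm:uniq blow 2}. The main obstacle is undoubtedly the logarithmic epiperimetric inequality itself: in two dimensions Weiss exploited that $\mathcal P$ is only one-dimensional and bypassed this via compactness, but for $n\ge 3$ the manifold $\mathcal P$ has dimension $\binom{n+1}{2}-1$ and the neutral subspace of the second variation of $W$ at each $p$ is too large to handle by any compactness argument; the contribution of Colombo--Spolaor--Velichkov is precisely the explicit, quantitative construction of $z$ along these neutral directions, which is what forces the logarithmic (rather than power-law) modulus.
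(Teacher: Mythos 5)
Note first that the paper does not prove Theorem \ref{thm:CSV}: it is quoted from \cite{CSV17}, and your outline follows exactly the route the paper attributes to that work (a logarithmic epiperimetric inequality for the Weiss energy, its iteration in the radius, and a conversion of the energy decay into a rate for the blow-up). As a proof, however, your proposal has a genuine gap: the logarithmic epiperimetric inequality of your second paragraph, which is the entire content of the theorem, is asserted rather than proved. Saying that one should ``slide along $\mathcal P$ to absorb the neutral degree-two modes and exploit $z\ge 0$ to detect a gain $\kappa(\cdot)^{1+\gamma}$'' names the difficulty but does not resolve it; the explicit competitor, the choice of $\gamma$, and the interplay between the spherical-harmonic decomposition and the constraint are precisely the quantitative construction of Colombo--Spolaor--Velichkov, and you concede as much. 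Also, the iteration should be run through the precise identity behind Proposition \ref{prop:Weiss}, namely that $\frac{d}{dr}W(r,u)$ equals a positive dimensional multiple of $\frac1r\bigl(W(1,(u_r)^{\rm hom})-W(1,u_r)\bigr)$ plus the nonnegative term $\frac1r\int_{\partial B_1}(\partial_\nu u_r-2u_r)^2$; your asserted ``sandwich'' of $W(1,(u_r)^{\rm hom})$ between $\phi(r)+c_n$ and $\phi(2r)+c_n$ is not justified as stated, although the resulting differential inequality $\frac{d}{dr}\phi\ge\frac{c}{r}\,\phi^{1+\gamma}$ does yield $\phi(r)\le C|\log r|^{-1/\gamma}$ as you claim.

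The step that actually fails as written is the final conversion via Monneau. The computation in the proof of Lemma \ref{lem:Mon} (with $w=u-p_{*,0}$, $\Delta w=-\chi_{\{u=0\}}$, $w\Delta w=p_{*,0}\chi_{\{u=0\}}$) gives
\begin{equation*}
\frac{d}{dr}M(r,u,p_{*,0})\;=\;\frac{2}{r}\Bigl(W(r,u)-W(r,p_{*,0})\Bigr)\;+\;\frac{2}{r^{n+3}}\int_{B_r\cap\{u=0\}}p_{*,0},
\end{equation*}
i.e.\ the \emph{lower} bound $\frac{d}{dr}M\ge\frac2r\,\phi(r)$, not the upper bound $\frac{d}{dr}M\lesssim\phi(r)$ that you invoke: the inequality goes the wrong way, and the additional nonnegative term is of the order of $\frac1r\,|\{u=0\}\cap B_r|/|B_r|$, a contact-set density whose decay is essentially part of what one is trying to prove and is not controlled by $\phi(r)$. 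The standard remedy (and the one in \cite{CSV17}) bypasses Monneau: on $\partial B_1$ one has $\partial_r u_r=r^{-1}(\partial_\nu u_r-2u_r)$, so Cauchy--Schwarz together with the derivative formula from Proposition \ref{prop:Weiss} gives $\|u_r-u_{r/2}\|_{L^1(\partial B_1)}\le C\bigl(\phi(r)-\phi(r/2)\bigr)^{1/2}$; summing over dyadic scales (grouping them in blocks and using Cauchy--Schwarz again) and using $\phi(\rho)\le C|\log\rho|^{-1/\gamma}$ with $\gamma<1$ yields $\|u_\rho-p_{*,0}\|_{L^1(\partial B_1)}\le C|\log\rho|^{-\epsilon}$ for all $\rho\le r$, which can then be interpolated with the uniform $C^{1,1}$ bound of Theorem \ref{thm:C11} to reach the $L^\infty(B_r)$ estimate, as in your last step. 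Finally, to get constants locally uniform in $x_0$ you must ensure that the traces of $u_{x_0,\rho}$ are $\delta$-close to $\mathcal P$ for all small $\rho$ uniformly on compact subsets of $\Sigma$; pointwise blow-up convergence is not enough, and one needs the locally uniform continuity of $x_0\mapsto p_{*,x_0}$ from Theorem \ref{thm:uniq blow 2} (or the form of the epiperimetric inequality in \cite{CSV17} that dispenses with the closeness assumption).
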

In other words, in dimension $n\geq 3$ one can improve the $C^1$ regularity of Caffarelli to a quantitative one, with a logarithmic modulus of continuity. This result raises the question of whether one may hope to improve such an estimate, or if this logarithmic bound is optimal.

In a recent paper with Serra \cite{FSell} we showed that, up to the presence of some ``anomalous'' points of higher codimension where \eqref{eq:C2a u} is false for any $\alpha>0$, one can actually prove that \eqref{eq:C2a u} holds with $\alpha=1$ at most points (these points will be called ``generic'').
In particular, up to a small set, singular points can be covered by $C^{1,1}$ (and in some cases $C^2$) manifolds. As we shall discuss in Remark \ref{rmk:optimal} below, this result provides the optimal decay estimate for the contact set. In addition, we can prove that anomalous points may exist and our bound on their Hausdorff dimension is optimal.
In particular, the existence of anomalous points implies that also Theorem \ref{thm:CSV} is optimal.

Before stating our result we note that, as a consequence of Theorem \ref{thm:uniq blow 2}, points in $\Sigma_0$ are isolated and $u$ is strictly positive in a neighborhood of them. In particular $u$ solves the equation $\Delta u=1$ in a neighborhood of $\Sigma_0$, hence it is analytic there. Thus, it is enough to understand the structure of $\Sigma_m$ for $m=1,\ldots,n-1$.

Here and in the sequel, ${\rm dim}_{\mathcal H}(E)$ denotes the Hausdorff dimension of a set $E$.
The main result in \cite{FSell} the following:
\begin{theorem}
\label{thm:main}
Let $\Sigma:=\cup_{m=0}^{n-1}\Sigma_m$ denote the set of singular points. Then:
\begin{enumerate}
\item[($n=2$)] $\Sigma_1$ is locally  contained in a $C^{2}$ curve.
\item[($n\ge 3$)] \begin{enumerate}
\item
The higher dimensional stratum $\Sigma_{n-1}$ 
can be written as the disjoint union of ``generic points'' $\Sigma_{n-1}^g$ and ``anomalous points'' $\Sigma_{n-1}^a$, where:\\
- $\Sigma^g_{n-1}$ is locally contained in a $C^{1,1}$ $(n-1)$-dimensional manifold;\\
- $\Sigma^a_{n-1}$ is a relatively  open subset of $\Sigma_{n-1}$ satisfying  $${\rm dim}_{\mathcal H}(\Sigma^a_{n-1})\leq n-3$$  (actually, $\Sigma^a_{n-1}$ is discrete when $n=3$).

Furthermore, the whole stratum $\Sigma_{n-1}$ can be locally covered by a $C^{1,\alpha_0}$  $(n-1)$-dimensional manifold, for some dimensional exponent $\alpha_0>0$. 
\item For all $m=1,\ldots,n-2$ we can write $\Sigma_{m}=\Sigma_m^g\cup\Sigma_g^a$,
where:\\
- $\Sigma_m^g$ can be locally covered by a $C^{1,1}$
$m$-dimensional manifold;\\
- $\Sigma^a_m$ is a relatively open subset of $\Sigma_{m}$ satisfying $${\rm dim}_{\mathcal H}(\Sigma^a_m)\leq m-1$$ (actually, $\Sigma^a_m$ is discrete when $m=1$).

In addition, the whole stratum $\Sigma_{m}$ can be locally covered by a $C^{1,\log^{\epsilon_0}}$ $m$-dimensional manifold, for some dimensional exponent $\epsilon_0>0$. 
\end{enumerate}
\end{enumerate}
\end{theorem}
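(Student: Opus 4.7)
The plan is to build on Caffarelli's stratification by quantifying the rate of convergence of $u$ to its singular blow-up polynomial. As noted in the remarks following the stratification theorem, an estimate of the form $\|u(x_0+\cdot)-p_{*,x_0}\|_{L^\infty(B_r)}\leq C r^{2+\alpha}$, uniformly in $x_0$ on a piece of $\Sigma_m$, produces via Whitney's extension theorem in H\"older spaces a $C^{1,\alpha}$ $m$-dimensional manifold containing that piece; similarly a $o(r^2)$ bound with a prescribed modulus yields a $C^{1,\omega}$ manifold. Thus the entire theorem is reduced to proving (i) that the $r^3$ decay holds at the ``generic'' points (giving $C^{1,1}$, or $C^2$ when $n=2$ after one further step), and (ii) a sharp Hausdorff-dimensional bound on the ``anomalous'' set where this sharp decay fails.

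Next I would introduce second-order blow-ups. Fix $x_0\in\Sigma_m$, set $L_{x_0}:=\{p_{*,x_0}=0\}$ (an $m$-plane) and consider the difference $w(x):=u(x_0+x)-p_{*,x_0}(x)$, which satisfies $\Delta w=-\chi_{\{u=0\}}$ and vanishes to second order at $0$ by Theorem \ref{thm:uniq blow 2}. Monneau's monotonicity (Lemma \ref{lem:Mon}) provides the natural normalization $h(r):=\bigl(r^{-n-3}\int_{\partial B_r}w^2\bigr)^{1/2}$, and I study the family $w_r(x):=r^{-2}w(rx)/h(r)$. Compactness (from $u\in C^{1,1}_{\mathrm{loc}}$ and the Monneau bound) yields subsequential limits $q_*$ that are harmonic in $\R^n\setminus L_{x_0}$, non-negative on $L_{x_0}$, two-homogeneous, orthogonal to $\mathcal P$ in $L^2(\partial B_1)$, and normalized. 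A unique-continuation argument across $L_{x_0}$, combined with the constraint $q_*\geq 0$ on $L_{x_0}$, shows that every such $q_*$ is in fact a harmonic polynomial, and one distinguishes two possibilities: either $\deg q_*\geq 3$ (the \emph{generic} case), or $q_*$ is a non-trivial $2$-homogeneous harmonic polynomial vanishing on $L_{x_0}$ (the \emph{anomalous} case).

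In the generic case, a Weiss-type monotonicity formula for $w$ centered on higher-homogeneity competitors, coupled with an epiperimetric inequality in the spirit of \cite{W99,CSV17}, forces $h(r)\leq Cr^{1+\alpha}$ with $\alpha=1$, hence $\|w\|_{L^\infty(B_r)}\leq Cr^3$, with constants uniform on a relatively open subset of $\Sigma_m$; Whitney's $C^{1,1}$ extension applied to $F:=\nabla u$ with prescribed gradient $D^2 p_{*,x_0}$ then gives the $C^{1,1}$ manifold containing $\Sigma_m^g$. For $n=2$, $m=1$, one can iterate once more because on a curve the only admissible homogeneous corrections are cubic and an \emph{a priori} estimate of the third-order coefficient along the curve is available, upgrading $C^{1,1}$ to $C^2$. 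The $C^{1,\log^{\epsilon_0}}$ (resp.\ $C^{1,\alpha_0}$) covering of the whole stratum $\Sigma_m$ is exactly Theorem \ref{thm:CSV} (resp.\ its sharper top-stratum counterpart) combined with Whitney in the corresponding modulus class.

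The hard part is the Hausdorff-dimensional bound on $\Sigma_m^a$. Anomalous points are those where some second blow-up $q_*$ is a non-zero harmonic $2$-homogeneous polynomial vanishing on $L_{x_0}$; the space of such polynomials has dimension $\leq n-m-1$, and when $n-m-1=0$ (top stratum in $n=2$, hence vacuous) or $=1$ (top stratum in $n=3$, $m=1$ in general $n$) the anomalous configurations are so rigid that $\Sigma_m^a$ is discrete. In the remaining cases I would run a Federer-type dimension reduction: at an anomalous $x_0$, the blow-up $q_*$ is invariant under the full $m$-plane $L_{x_0}$ \emph{plus} an additional direction determined by the anomalous harmonic mode, and an almost-monotonicity/uniform-continuity statement (propagated from Theorem \ref{thm:uniq blow 2} to the second blow-up via a quantitative Monneau argument on $\Sigma_m^a$) shows that two nearby anomalous points must have almost aligned invariant subspaces; a covering argument then yields $\dim_{\mathcal H}(\Sigma_m^a)\leq m-1$, and the sharper $\leq n-3$ bound in the top stratum because two anomalous directions must be spent simultaneously. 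The genuine obstacle here is the classification of anomalous second blow-ups and the quantitative propagation of their invariance through the stratum; the rest is a careful but standard stratification/extension machinery.
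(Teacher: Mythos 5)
Your overall skeleton (quadratic-error decay plus Whitney extension, second blow-ups of $w=u(x_0+\cdot)-p_{*,x_0}$ normalized by the Monneau quantity, a generic/anomalous dichotomy, Federer-type reduction) does match the strategy of the paper, but two central steps are wrong or missing. First, the claim that every second blow-up $q_*$ is a harmonic polynomial, obtained by ``unique continuation across $L_{x_0}$'', fails: $q_*$ is harmonic only in $\R^n\setminus L_{x_0}$, and $\Delta q_*\le 0$ is in general a nontrivial measure supported on $L_{x_0}$, so no unique continuation across $L_{x_0}$ is available. In the top stratum $m=n-1$ the correct statement is that $q_*$ is a global homogeneous solution of the Signorini (thin obstacle) problem; such solutions need not be polynomials, can have non-integer homogeneity, and the possible existence in dimension $n\ge 3$ of solutions with homogeneity $\lambda\in(2,3)$ is precisely what produces $\Sigma^a_{n-1}$ and the bound ${\rm dim}_{\mathcal H}(\Sigma^a_{n-1})\le n-3$ (because no such solution exists in dimension $2$, and $3$ is the best known lower bound on the dimension where one could exist). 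Your framework, by forcing all second blow-ups to be polynomials of integer homogeneity, cannot even formulate the top-stratum anomalous set correctly; moreover your dimension count for the anomalous modes (that $2$-homogeneous harmonic polynomials vanishing on $L_{x_0}$ span a space of dimension at most $n-m-1$) is false --- for $L=\{x_n=0\}$ the polynomials $x_nx_i$, $i<n$, already give dimension $n-1$ --- so the rigidity and discreteness conclusions you draw from it collapse.

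Second, the two quantitative engines of the proof are missing. At generic points you invoke an epiperimetric inequality ``forcing $\alpha=1$'', but the known epiperimetric inequalities at singular points give only a small exponent in $n=2$ and a logarithmic gain in $n\ge3$; asserting $\alpha=1$ this way is essentially assuming the theorem. The paper instead derives the decay $\bigl(r^{-(n-1)}\int_{\partial B_r}w^2\bigr)^{1/2}\le C\,r^{\lambda_*}$ from a Monneau-type monotonicity with exponent $\lambda_*$, the Almgren frequency of $w$, so that the entire problem becomes proving $\lambda_*\ge 3$ outside a small set. For that, and for the bound ${\rm dim}_{\mathcal H}(\Sigma^a_m)\le m-1$ when $m\le n-2$, the key tool you lack is the inequality $\int_{\partial B_1} q\,(p_{*,x_0}-p)\ge 0$ for all $p\in\mathcal P$, a consequence of Monneau's monotonicity: it is this sign information which shows (for $n=3$, $m=1$) that an anomalous blow-up satisfies $q|_L<0$ away from the origin, contradicting the accumulation of singular points along $L$ (which would force $q\ge0$ there), whence discreteness; the general case then follows by a Federer reduction that is itself nonstandard here, since blow-ups at different points are taken of different functions $u-p_{*,x_k}$. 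Without a substitute for this mechanism, your ``almost aligned invariant subspaces'' covering argument is unsupported --- and since anomalous points genuinely exist, no soft argument can dispose of them.
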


This result needs several comments.

\begin{remark}\label{rmk:optimal}
We first discuss the optimality of the theorem above.
\begin{enumerate}
\item
Our $C^{1,1}$ regularity provides the optimal control on the contact set in terms of the density decay. Indeed our result implies that, at all singular points up to a $(n-3)$-dimensional set (in particular at all singular points when $n=2$, and at all singular points up to a discrete set when $n=3$), the following bound holds:
$$
\frac{|\{u=0\}\cap B_r(x_0)|}{|B_r(x_0)|}\leq Cr\qquad \forall\,r>0.
$$
In view of the two dimensional Example 1 in \cite[Section 1]{Sch76}, this estimate is optimal.

\item The possible presence of anomalous points comes from different reasons depending on the dimension of the stratum.
\begin{enumerate}
\item The possible presence of points in $\Sigma_{n-1}^a$ comes from the potential existence, in dimension $n\ge 3,$ of $\lambda$-homogeneous solutions to the so-called Signorini problem with $\lambda\in(2,3)$. More precisely, it follows by our proof that the following result holds:

{\em Let $q\in W^{1,2}_{\rm loc}(\R^k)$ satisfy:\\
- $q$ is $\lambda_*$-homogeneous, namely $q(\varrho x)=\varrho^{\lambda_*}q(x)$ for all $\varrho>0$;\\ 
- $\Delta q\leq 0$, $q\Delta q\equiv 0$, $q|_{\{x_k=0\}}\geq 0$;\\ 
- $\Delta q=0$ inside $\R^k\setminus\{x_k=0\}$.\\
Let $k$ be the smallest dimension for which there exists a nontrivial solution to the problem above with $\lambda_* \in (2,3)$.
Then ${\rm dim}_{\mathcal H}(\Sigma^a_{n-1})\leq n-k$.}

Because  $k\geq 3$ is the best lower bound currently known on $k$ (see for instance \cite{FS17}), we get ${\rm dim}_{\mathcal H}(\Sigma^a_{n-1})\leq n-3$.

\smallskip

 \item For $m\leq n-2$, the anomalous points in the strata $\Sigma^a_m$ come from the possibility that, around a singular point $x_0$, the function $(u-p_{*,x_0})|_{B_r(x_0)}$ behaves as $\varepsilon_r q$, where:\\
 - $\varepsilon_r\in \R^+$ is infinitesimal as $r\to 0^+$, but $\varepsilon_r\gg r^\alpha$ for any $\alpha>0$;\\
- $q$ is a nontrivial second order harmonic polynomial.\\
 Although this behavior may look strange (we are saying that, after one removes  from $u$ its second order Taylor expansion, one still sees a second order polynomial), this can  actually happen and our estimate on the size of $\Sigma_m^a$ is optimal.
 Indeed, we can construct examples of solutions for which ${\rm dim}(\Sigma_m^a)=m-1$. 
\end{enumerate} 
\end{enumerate}
\end{remark}

We now make some general observations on Theorem \ref{thm:main}.
\begin{remark}
\label{rmk:main thm}
\begin{enumerate}
\item 
Our result extends Theorem \ref{thm:Weiss} to the highest dimensional stratum in every dimension, and improves it when $n = 2$.
\item
The last part of the statement in the case ($n\geq 3$)-(b) corresponds to Theorem \ref{thm:CSV}. In \cite{FSell} we obtain this result as a simple consequence of our analysis.
\item
The set of generic points is not open inside the singular set. In particular,  anomalous points can accumulate at generic points
even in dimension 3 (where anomalous points are discrete).
\item In \cite{Sak91,Sak93}, Sakai proved very strong structural results for the free boundary in dimension $n=2$. However, his results are very specific to the two dimensional case with analytic right hand side, as they rely on complex analysis techniques. On the other hand, all the results mentioned before \cite{C77,C98,W99,CSV17} are very robust and apply to more general right hand sides. Analogously, also our techniques are robust and can be extended to general right hand sides.
 \end{enumerate}
 \end{remark}

\noindent
{\bf Strategy of the  proof of Theorem \ref{thm:main}.}
The idea of the proof is the following:
as mentioned before, to obtain $C^{1,1}$ regularity of the singular set we would like to show that \eqref{eq:C2a u} holds with $\alpha\geq 1$.

So, let $0$ be a singular free boundary point. 
Using Weiss' and Monneau's motononicity formulae, we are able to prove that the so-called Almgren frequency function is monotone on $w:=u-p_*$. More precisely, if we set
$$
\tilde w_r(x):=\frac{w(rx)}{\|w(r\,\cdot\,)\|_{L^2(\partial B_1)}},
$$
then we can show
\begin{equation}
\label{eq:Almgren}
\frac{d}{dr}\|\nabla \tilde w_r\|_{L^2(B_1)}^2\geq \frac2r \biggl(\int_{B_1}\tilde w_r\Delta\tilde w_r\biggr)^2 \geq 0,
\end{equation}
see \cite[Proposition 2.4 and Equation (2.20)]{FSell}.

Note that, as a consequence of \eqref{eq:Almgren}, it follows that $\|\nabla \tilde w_r\|_{L^2(B_1)} \leq \|\nabla \tilde w_1\|_{L^2(B_1)}$ for all $r\leq 1$,
which allows us to perform blow-ups around $0$ by considering weak $W^{1,2}$ limits of $\tilde w_r$ as $r\to 0.$

Set $\lambda_*:=\lim_{r\to 0}\|\nabla \tilde w_r\|_{L^2(B_1)}^2$.
Using \eqref{eq:Almgren} again we can prove that, up to a subsequence,
$w_r {\rightharpoonup}q$ in $W^{1,2}(B_1)$,
where:\\ 
- $q$ is $\lambda_*$-homogeneous and $q \Delta q\equiv 0$;\footnote{Indeed, one can prove that   $q$ satisfies
$$
\frac{d}{dr}\|\nabla \tilde q_r\|_{L^2(B_1)}^2\equiv 0\qquad \text{where}\quad\tilde q_r(x):=\frac{q(rx)}{\|q(r\,\cdot\,)\|_{L^2(\partial B_1)}}
$$
(see the proof of \cite[Proposition 2.10]{FSell}).
Then, it follows from  \eqref{eq:Almgren} and its proof  that $q$ is $\lambda_*$-homogeneous with $\lambda_*=\|\nabla q\|_{L^2(B_1)}^2=\lim_{r\to 0}\|\nabla \tilde w_r\|_{L^2(B_1)}^2$, and that $q\Delta q \equiv 0$.
}\\ 
- $\Delta q\leq 0$, and $\Delta q$ is supported on $L=\{p_{*,0}=0\}$;\footnote{The fact that $\Delta q\leq 0$ follows by noticing that $\Delta w=\chi_{\{u>0\}}-1\leq 0$, thus $\Delta \tilde w_r \leq 0$ for all $r>0$. 

For the second part, note that $\Delta w=0$ inside $\{u>0\}$.
Therefore
$\Delta \tilde w_r=0$ inside the set $\frac1r\bigl(\{u>0\}\cap B_r)$, which converges to $B_1\setminus \{p_{*,0}=0\}$ as $r\to 0$.}

In addition, by a variant of the argument in the proof of Lemma \ref{lem:Mon} we can prove that
$$
\frac{d}{dr}\biggl(\frac{1}{r^{n-1+2\lambda_*}}
 \int_{\partial B_r} w^2\biggr)\geq 0.
$$
This implies that 
$$
\biggl(\frac{1}{r^{n-1}}\int_{\partial B_r}w^2\biggr)^{1/2} \leq \biggl(\int_{\partial B_1}w^2\biggr)^{1/2}r^{\lambda_*}\leq C\,r^{\lambda_*},
$$
from which we are able to deduce that
\eqref{eq:C2a u} holds at $x_0=0$ with
$
2+\alpha=\lambda_*.
$
Hence our problem is reduced to understanding the possible values of $\lambda_*$, and more precisely in proving (if possible) that $\lambda_*\geq 3$.

Although it is not difficult to prove that $\lambda_*\geq 2$ (this follows from the fact that $\|\nabla \tilde w_r\|_{L^2(B_1)}^2 \geq 2$, see \eqref{eq:mon1}), it is actually unclear how to exclude that $\lambda_*=2$. Note that, in the latter case, we would get no new information with respect to what was already known!

Recalling that $m={\rm dim}(L)$, we need to distinguish between the two cases $m=n-1$ and $m\leq n-2$. We begin with the latter.

\begin{itemize}
\item[$\bullet$] {\it The case $m\leq n-2$.}
Because $q \in W^{1,2}(B_1)$ and its Laplacian is concentrated on $L$ which has dimension  at most $n-2$, it follows by a classical capacity argument that $\Delta q$ must be identically zero, so 
 $q$ is harmonic. In particular $$\lambda_* \in \{2,3,4,\ldots\}.$$
 
Hence, we only need to exclude that $\lambda_*=2$ (this would correspond to the point $0$ being ``anomalous'').
Unfortunately, as already mentioned before, anomalous points may exists in dimension $n \geq 3$.
To circumvent this difficulty, a key ingredient in our analysis comes from the following fundamental relation, that we prove as a consequence of Lemma \ref{lem:Mon}:
\begin{equation}\label{howisD2q}
\int_{\partial B_1} q(p_*-p) \ge  0 \qquad \mbox{for all } p \in \mathcal P.
\end{equation}
Thanks to this inequality we can show that, whenever
$\lambda_*=2$, some very strong relation between $p_*$ and $q$ holds (see \cite[Proposition 2.10]{FSell}). Thus, our goal becomes proving that \eqref{howisD2q} cannot be true at ``too many'' singular points.

One of our key results shows that, when $n=3$ and $m=1$, then   \eqref{howisD2q} implies that
$q|_L<0$ outside of the origin.
Then we show that this is incompatible with having a sequence of singular points $x_k$ converging to $0$ (the reason for this incompatibility is that these points would force $q$ to being nonnegative on $L$). Hence we conclude that anomalous points are isolated for $n=3$.

Once this result is proved, by a Federer-type dimension reduction principle we handle the case $n \geq 4$ and prove that ${\rm dim}_{\mathcal H}(\Sigma_m^a)\leq m-1$. Note that Federer dimension reduction principle is not standard in this setting, the reason being that if $x_0$ and $x_1$ are two different singular points then the blow-ups at such points come from different functions, namely $u-p_{*,x_0}$ and $u-p_{*,x_1}$. 

Finally, the $C^{1,\log^{\epsilon_0}}$ regularity of $\Sigma_m$ comes as a simple consequence of our analysis combined with Caffarelli's asymptotic convexity estimate \cite{C77}.

\item[$\bullet$] {\it The case $m=n-1$.}
In this case we observe the following:
since $u-p_{*,0}=u\geq 0$ on $L$, then
$\tilde w_r|_L\geq 0$ for any $r>0$.
Hence, since ${\rm dim}(L)=n-1,$  by a trace inequality we deduce that $q\geq 0$ on $L$.
 Thus we have obtained that:\\
 - $q$ is $\lambda_*$-homogeneous;\\ 
- $\Delta q\leq 0$, $q\Delta q\equiv 0$, $q|_L\geq 0$;\\ 
- $\Delta q$ is supported on $L$.

We now note that this system is simply the PDE characterization of global homogeneous solutions to the so-called 
 {\it Signorini problem}, also known as {\it thin obstacle problem} (see for instance \cite{AC04,ACS08,GP09,FS17}).
In particular, since all global homogeneous solutions are classified when $n=2$,  we deduce that
in two dimensions
$$
\lambda_*\in \{2,3,4,\ldots\}\cup
\biggl\{4-\frac12,6-\frac12,8-\frac{1}{2},\ldots\biggr\}.
$$
Also, using again \eqref{howisD2q} and the fact that now $q|_L\geq 0,$ we can 
rule out $\lambda_*=2$. 
This proves that $\lambda_*\geq 3$ for $n=2$.
Then the higher dimensional case is handled  using again a Federer-type dimension reduction principle.

Furthermore, to show that $\Sigma_{n-1}$ is contained in a $C^{1,\alpha_0}$-manifolds, we prove that $\lambda_*\geq 2+\alpha_0>2$ in every dimension.
\end{itemize}

Finally, the $C^2$ regularity in two-dimensions requires a further argument based on a new monotonicy formula of Monneau-type.

\section{Future directions}
Although the results presented in the previous sections provide a very good understanding of the free boundary regularity in the classical obstacle problem, there are still several directions that are worth being investigated.

First of all, Theorem \ref{thm:main} shows that if $x_0 \in \Sigma_m^a$ with $m \leq n-2$, then the expansion 
\eqref{eq:C2 u} is optimal. On the other hand, we have seen that \eqref{eq:C2a u} holds with $\alpha=1$ at generic points.
Hence a natural question is whether, at a generic point $x_0$, there exists a unique third order harmonic polynomial $p_{3,x_0}$ such that 
$$
\|u(x_0+\cdot)-p_{*,x_0}-p_{3,x_0}\|_{L^\infty(B_r)}= o(r^3).
$$
As shown in \cite{FSell}, this is true at almost every generic point. So one may ask whether one can further improve the error $o(r^3)$ to $O(r^4)$, and more in general whether one can prove a Taylor expansion up to every order (at least at most points).
Motivated by applications to the Schaeffer's conjecture (which states that, for generic obstacles, the set of singular points should be empty \cite{Sch76}), in \cite{FRS} we prove such an expansion up to order 5. Then, as a corollary, we obtain the validity of Schaffer's conjecture in dimension 3 (the two dimensional case had already been proved by Monneau in \cite{M03}).
\smallskip

An important direction related to the discussion above is whether an expansion holds up to every order.
More in general, given a positive function $h$ of class $C^{k,\alpha}$ and a solution of $\Delta u=h\chi_{\{u>0\}}$ (this corresponds to the obstacle being of class $C^{k+2,\alpha}$, see \eqref{eq:obst var}), one may ask whether an expansion of order $k+2$ holds at most singular points.
\smallskip

On a different direction, we may note that all the results obtained up to now concern only the structure of regular and singular points when seen as ``disjoint sets''. For instance, Theorem \ref{thm:dico} does not say anything about the regularity of regular points as they approach the singular set.
Using complex variable techniques, an answer to this problem has been given for analytic obstacles when $n=2$ \cite{Sak91,Sak93} (see also \cite{CR76,M03} for some results in the case of smooth obstacles). It would already be very interesting to have a complete description of the free boundary in the two dimensional case for obstacles that are smooth (say $C^\infty$) but not analytic.
\smallskip

Finally, all these questions can be asked in the parabolic version of the obstacle problem, namely when $u$ solves\footnote{We note that the parabolic obstacle problem is related to the one-phase Stefan problem. The latter aims to describe the temperature distribution in a homogeneous medium undergoing a phase change, typically the melting of a body of ice maintained at zero degrees centigrade. Given are the initial temperature distribution of the water $\theta(t,x)\geq 0$, the heat of fusion (i.e., the amount of energy per unit 
mass needed to melt
the solid) that is usually assumed to be identically equal to $1$, and the energy contributed to the system through the boundary of the domain. The unknowns are the temperature distribution of the water as a function of space and time, and the  ice-water interface (i.e., the {free boundary}).
The relation between these two problems is given by the so-called Duvaut's transformation $u(t,x):=\int_0^t\theta(s,x)\,ds$ \cite{Duv,Duv2}. With this change of variables one can note that $\{\theta>0\}=\{u>0\}$, so any result on the free boundary regularity for the parabolic obstacle problem implies the validity of the same result also 
for the Stefan problem.}
$$
\partial_tu-\Delta u=-\chi_{\{u>0\}},\qquad u \geq 0,\qquad\partial_t u \geq 0.
$$
In \cite{FRS} we generalize Theorem \ref{thm:main} to this setting and, as a consequence, for $n \leq 3$ we prove that the free boundary is smooth outside of a closed set of ``singular times'' of dimension at most $1/2$.

\end{document}